\def\url@leostyle{%
	\@ifundefined{selectfont}{\def\UrlFont{\sf}}{\def\UrlFont{\scriptsize\ttfamily}}} \makeatother\urlstyle{leo}
\newtheorem{theorem}{Theorem}
\newtheorem{proposition}[theorem]{Proposition}
\newtheorem{lemma}[theorem]{Lemma}
\theoremstyle{definition}
\newtheorem{definition}[theorem]{Definition}
\newtheorem{example}[theorem]{Example}
\theoremstyle{remark}
\newtheorem{remark}[theorem]{Remark}
\numberwithin{equation}{section}
\numberwithin{theorem}{section}
\def\cF{\mathcal{F}}
\def\cH{\mathcal{H}}
\def\cP{\mathcal{P}}
\def\cQ{\mathcal{Q}}
\def\cT{\mathcal{T}}
\def\cU{\mathcal{U}}
\def\cX{\mathcal{X}}
\def\cY{\mathcal{Y}}
\def\cZ{\mathcal{Z}}
\def\bR{\mathbb{R}}
\def\bTheta{{\boldsymbol \Theta}}
\newcommand{\wt}{\widetilde}
\newcommand{\wh}{\widehat}
\newcommand{\1}{\mathbbm{1}}            
\newcommand{\set}[1]{\{#1\}}            
\renewcommand{\mid}{\;|\;}              
\DeclareMathOperator*{\argmin}{arg\,min} 
\DeclareMathOperator{\supp}{\mathrm{supp}}
\newenvironment{tightlist}[1]{%
    \list{{\textup{(\roman{enumi})}}}{\settowidth\labelwidth{{\textup{(#1)}}}
    \leftmargin -6pt \advance\leftmargin\labelsep \itemindent \parindent
    \parsep 0pt plus 1pt minus 1pt \topsep 0pt \itemsep 0pt
    \usecounter{enumi}}}{\endlist}
\title{Risk Filtering and Risk-Averse Control of Markovian Systems Subject to Model Uncertainty}
\author{Tomasz R. Bielecki\,\thanks{Department of Applied Mathematics, Illinois Institute of Technology
		\newline \hspace*{1.45em}  10 W 32nd Str, Building RE, Room 220, Chicago, IL 60616, USA
		\newline \hspace*{1.45em}  Emails: \url{tbielecki@iit.edu} (T. R. Bielecki), and \url{cialenco@iit.edu} (I. Cialenco)
		\newline \hspace*{1.45em}  URLs: \url{http://math.iit.edu/\~bielecki}  and \url{http://cialenco.com}
		\vspace{0.5em}}
		\and
	Igor Cialenco\,\footnotemark[1]
\and
	Andrzej Ruszczy\'{n}ski\,\thanks{Department of Management Science
and Information Systems, Rutgers University,
		\newline \hspace*{1.45em} 100 Rockafeller Road,  Room 5182, Piscataway, NJ 08854, USA
		\newline \hspace*{1.45em} Email: \url{rusz@business.rutgers.edu}, URL: \url{http://www.rusz.rutgers.edu/}
		\vspace{0.5em}}
}
\date{First circulated: June 18, 2022}
\begin{document}

\maketitle

\begin{abstract}
We consider a Markov decision process subject to model uncertainty in a Bayesian framework, where we assume that the state process is observed but its law is unknown to the observer. In addition, while the state process and the controls are observed at time $t$, the actual cost that may depend on the unknown parameter is not known at time $t$. The controller optimizes total cost by using a family of special risk measures, that we call risk filters and that are appropriately defined to take into account the model uncertainty of the controlled system. These key features lead to non-standard and non-trivial risk-averse control problems, for which we derive the Bellman principle of optimality. We illustrate the general theory on two practical examples: optimal investment and clinical trials.
\end{abstract}

\section{Introduction}

We study a risk-averse Markov decision problem (MDP) subject to uncertainty about the underlying dynamics as well as uncertainty about the risk-averse criterion.

Literature concerning risk-averse MDPs is rather abundant, and we refer to e.g. \cite{FanRuszczynski2014,FanRuszczynski2018} and references therein. Similarly, there is a vast literature on MDPs subject to model uncertainty, and we refer to \cite{BCCCJ2019} for an overview of the classical methodologies on this topic.
However, to the best of our knowledge, the present study is the first systematic study of risk-averse MDPs subject to model uncertainty.
The earlier effort in \cite{lin2021bayesian} focuses on the CVaR criterion that has an equivalent expected value formulation. It needs to be stressed that we are not only concerned with uncertainty regarding the underlying dynamics, but also uncertainty about the optimization criterion, which is a novel and important practical feature, as two examples below show. While frequent in machine learning literature, mainly concerned with the expected value criterion, such as \cite{lattimore2020bandit,sutton2018reinforcement}, it has not been addressed in the risk-averse case.

The Knightian uncertainty that we consider is parametric in nature, and our approach to tackle the respective MDP is rooted in the Bayesian methodology. It came to us as quite a surprise that accounting for possible uncertainty about the optimization criterion leads to rather intricate conceptual ideas and technical manipulations.
In order to avoid measurability and integrability issues that are notorious  and intrinsic in MDPs on general state and action spaces, and that would quite likely burden the main takeaways from this study, we decided to work with discrete state, action and parameter spaces. However, morally, the results should hold true in much more generality, that will be addressed in future works. We chose to use integral notation with respect to the state variables, which is much more pleasing to the eye and lighter then the summation notation. We keep the summation with respect to the time variable though, whenever needed.

The solution to the considered risk-averse MDP hinges on the key and new concepts of dynamic risk filters and recursive dynamic risk filters. This, in particular allows to derive a version of the dynamic programming routine suited to the needs of our uncertain risk-averse MDP.

The paper is organized as follows. In Section~\ref{s:pomdp} we set the stage and define MDP and the model uncertainty framework. Also here we introduce a series of probability measures and some of their properties used frequently in the sequel. Section~\ref{s:filters} is devoted to risk filters, starting with the definition and some fundamental properties of these objects. The key concept of parameter consistency of risk filters is introduced in Section~\ref{s:param-consist}, while the time consistency of risk filters is studied in Section~\ref{s:time-consist}. In this section we provide a characterization of parameter consistent and time consistent risk filters; cf. Theorem~\ref{t:filter-structure}.
Also here we discuss two important examples of risk filters: expectation of an additive functional, Example~\ref{ex:additive-reward}, and risk-sensitive criteria in the context of clinical trials, Example~\ref{ex:3.6}. The structure Theorem~\ref{t:filter-structure} leads to the notation of recursive risk filters, introduced in Section~\ref{s:bayes-op}. Section~\ref{sec:med} is devoted to the risk-averse control problem. In Section~\ref{s:Bayes-Operator2} we derive the Bayes operator for the posterior of the parameter of interest. Then, we derive the dynamic programming backward recursion for the classical additive reward case; Section~\ref{sec:add}. Here, as a particular case, we briefly discuss the optimal investment and consumption problem, when the investor faces Knightian uncertainty and unknown risk-aversion parameter; Example~\ref{ex:risk-averse-unknown}. We conclude with the solution to the optimal control problem for a general recursive risk filer: Theorem~\ref{thm:RA}.

Finally, we want to mention that while writing this manuscript we strove to keep a balance between heavy notations and rigor. Nevertheless, some formulas still may appear overwhelming, which is typically the case for MDPs.

\section{Markov Decision Processes with Model Uncertainty}\label{s:pomdp}

We consider an observed, controlled random process $X=\{X_t\}_{t=1 ,\dots, T}$.
The corresponding state space is a finite set $\cX$.
The underlying probability  space that we will work with is canonical. It includes the space of paths of $X$:
$\varOmega=\underbrace{\cX\times \cdots \times \cX}_{T\text{ times}}=(\cX)^T$,  endowed with the canonical product $\sigma$-field
$\cF=\underbrace{2^\cX \otimes \cdots \otimes 2^\cX}_{T\text{ times}}$. The elements of $\Omega$ are $\omega=(\omega_1,\ldots,\omega_T)$.
We use $x_t$ to denote the canonical projections at time $t$, so that $X_t (\omega) = x_t =\omega_t$.
We let $\{\cF_t^X\}_{t=1,\dots,T}$ to denote the canonical filtration generated by the process $X$,  so that $\cF_t^X=\underbrace{2^\cX \otimes \cdots \otimes2^\cX}_{t\text{ times}}\otimes \underbrace{\{\Omega,\emptyset\} \otimes \cdots \otimes \{\Omega,\emptyset\}}_{T-t\text{ times}}$. We will make use of the notations $\cT=\set{1,\ldots,T}$ and $\cT_t=\set{t,\ldots,T}$.

The control space is  given by a finite set $\cU$, and  the set of admissible controls at step $t$ is given by  a multifunction $\cU_t : \cX \rightrightarrows \cU$ with nonempty values. We consider a parametric family of transition kernels
$K_\theta: \cX \times \cU \rightarrow \cP(\cX)$,
where $\cP(\cX)$ is the space of probability measures on $\cX$, and $\theta \in \boldsymbol \Theta$ represents an unknown parameter. Here, $\boldsymbol \Theta$ is a finite set.
The unknown true value of the parameter $\theta$ is $\theta^*$.

We will consider  the Bayesian setting, and therefore, we consider the product space $\widehat{\varOmega} = \varOmega \times \boldsymbol \Theta$ endowed with product $\sigma$-algebra $\widehat \cF:=\cF\otimes 2^{\boldsymbol \Theta}$. We denote $\widehat{\omega}=(\omega,\theta)$ and $\widehat{\omega}_t=(\omega_t,\theta)$. In accordance with the Bayesian setting we denote by $\Theta$ a random variable on $(\widehat{\varOmega},\widehat \cF)$ with values in $\boldsymbol \Theta$, and  with $\Theta(\widehat \omega)=\theta$. We also assume that some \emph{prior distribution} $\xi_1$ of $\Theta$ (supported in $\boldsymbol \Theta$) is available.

The process $\{X_t\}_{t\in\cT}$ considered as a process on $(\widehat{\varOmega},\widehat \cF)$ is denoted as $\widehat X=\{\widehat X_t\}_{t\in\cT}$, and $\widehat X_t(\widehat \omega)=X_t(\omega)$. Accordingly, the canonical filtration  generated by the process $\widehat X$ is given as $\{\cF_t^{\widehat X}=\cF_t^X\otimes 2^{\boldsymbol \Theta},\ t\in\cT \}$.

At time~$t$,
the history of observed states is
$h_t = (x_1, x_2, \dots, x_t)$,
while all the information available for making a decision is
$g_t = (x_1, u_1, x_2, u_2, \dots, x_t)$.
We use
$
\cH_t := \cX^t = \underbrace{\cX\times \cdots \times \cX}_{t\text{ times}}
$
to denote the spaces of possible state histories $h_t$.
We make distinction of $g_t$ and $h_t$ because
we should make decision of $u_t$ based on $g_t$ as the past controls $u_1, \dots, u_{t-1}$ are also taken into consideration when estimating the conditional distribution of $\theta$. We write $H_t$ for $(X_1,\dots,X_t)$ and $\widehat H_t$ for $(\widehat X_1,\dots,\widehat X_t)$.

A \emph{history-dependent admissible policy} $\pi=(\pi_1,\dots,\pi_{T})$ is a sequence of  functions $\pi_t(g_t)$ such that $\pi_t(g_t) \in \cU_t(x_t)$ for all possible $g_t$. One can easily prove that for such an admissible policy $\pi$, each $\pi_t$ reduces to a  function of $h_t=(x_1, x_2,\dots, x_t)$,\footnote{We are still using $\pi_s$ to denote the decision rule; it will not lead to any misunderstanding.}
as $u_s = \pi_s(x_1,\dots,x_s)$ for all $s=1 ,\dots, t-1$.
Therefore the set  of admissible policies is
\[
\varPi =  \ \big\{ \, \pi = (\pi_1,\dots,\pi_{T}) \, :  \, \; \pi_t(x_1,\dots,x_t) \in \cU_t(x_t), \ t\in\cT \,\big\}.
\]
Any policy $\pi \in \varPi$ defines the control process, also denoted by $\pi=\{\pi_t\}_{t\in\cT}$, with $\pi_t=\pi_t(X_1,\ldots,X_t)$. We make a distinction between $u_t=\pi_t(x_1,\ldots, x_t)$ and $\pi_t=\pi_t(X_1,\ldots,X_t)$.

As said in the Introduction, even though we work with discrete spaces $\cX$ and $\boldsymbol \Theta$, we are using the more convenient integral notation, rather than the summation notation.


For a fixed initial state $x_1$, every policy $\pi\in \varPi$, and every $\theta\in \boldsymbol \Theta$, a probability measure $P^{\pi}_{\theta}$ on $(\varOmega,\cF)$ is uniquely defined 
by:
\begin{align}
P^{\pi}_{\theta}(A_1\times A_2\times \ldots &\times A_{T-1}\times A_T)
 =\int_{A_1}\int_{A_2}\cdots \int_{A_{T-1}}K_\theta(A_T|x_{T-1},\pi_{T-1}(x_1,\ldots, x_{T-1}))\nonumber \\
 &\times K_\theta(dx_{T-1}|x_{T-2},\pi_{T-2}(x_1,\ldots, x_{T-2}))\times \cdots \nonumber \\
 &\quad \cdots \times  K_\theta(dx_2|x_1,\pi_1(x_1))\delta_{x_1}(dy), \qquad A_t\subset \cX,\ t\in\cT, \label{eq:I-T-1}
\end{align}
where, as usual, $\delta_x$ denotes the Dirac measure concentrated at $x$.
In particular,
\[
P^{\pi}_{\theta}(A)=P^{\pi}_{\theta}(X\in A)=P^{\pi}_{\theta}(\{\omega \in \Omega\, :\, X(\omega)\in A\}),\quad A\subset \Omega.
\]

The true but unknown measure under the policy $\pi$ is $P^{\pi}_{\theta^*}$. This measure gives the true law of the canonical process $X$ subject to control strategy $\pi$.

Given the prior distribution $\xi_1$, a probability measure $P^{\pi}$ on $(\widehat{\varOmega},{\widehat \cF})$ is defined as well:
\begin{align}\label{eq:I-T-integrated}
P^{\pi}(A\times D)=\int_D\, P^{\pi}_{\theta}(A)\;\xi_1(d\theta),\quad A\subset \Omega,\ D\subset \boldsymbol \Theta.
\end{align}
In particular,
\[
P^{\pi}(A\times D)=P^{\pi}(\{\widehat \omega \in \widehat \Omega\, :\, \widehat X(\widehat \omega)\in A,\, \Theta(\widehat \omega)\in D  \}).
\]
Clearly, $\xi_1$ is the marginal of $P^{\pi}$, that is $\xi_1(D)=P^{\pi}(\Omega\times D)$. To simplify the ensuing study, we assume that for any $t\in\cT$ and $h_t\in\cH_t$ we have $P^{\pi}(\widehat H_t=h_t)>0.$ This assumption is of course an assumption about the kernels $K_\theta$, $\theta\in \boldsymbol \Theta$.

Furthermore, for each $t=1,\dots,T-1$ and for each history $h_t \in \cH_t$, we define the set of \textit{tail control strategies}
\[
\Pi^{t,h_t}=\{\pi^{t,h_t} : \pi_t^{t,h_t}=\pi_t(h_t),\ \pi_s^{t,h_t}(x_{t+1},\dots,x_s)=\pi_s(h_t,x_{t+1},\dots,x_s),\ s\in\cT_{t+1},\ \pi \in \Pi\}.
\]
In addition, for $t=1,\ldots, T-1$,  and for each $\theta\in \boldsymbol \Theta$, $h_t\in \cH_t$, and $\pi^{t,h_t}\in \Pi^{t,h_t}$ we construct a probability  measure $P^{\pi^{t,h_t}}_{\theta,t+1,T}$ on $\cX^{T-t}$ in analogy to \eqref{eq:I-T-1}.
Specifically, we put
\begin{align}
P^{\pi^{t,h_t}}_{\theta,t+1,T}& (A_{t+1}\times \cdots \times A_T)
	=\int_{A_{t+1}}\int_{A_{t+2}}\cdots \int_{A_{T-1}}K_\theta(A_T|x_{T-1},\pi_{T-1}(h_t, x_{t+1},\ldots, x_{T-1})) \nonumber \\
	&\cdot  K_\theta(dx_{T-1}|x_{T-2},\pi_{T-2}(h_t,x_{t+1},\ldots, x_{T-2})) \cdots K_\theta(dx_{t+2}|x_{t+1},\pi_{t+1}(h_{t},x_{t+1}))  \nonumber \\
	& \qquad \cdot K_\theta(dx_{t+1}|x_{t},\pi_t(h_{t})), \qquad A_s\subset \cX,\ s\in\cT_{t+1}. \label{eq:I-T}
\end{align}

We proceed with three technical results that are rather straightforward consequences of the above set-up.

\begin{lemma}\label{lemma:21}
	 For any $h_t\in\cH_t$, and $A_s\subset \cX, \ s\in\cT_{t+1}$, $\pi \in \Pi$, and the corresponding $\pi^{t,h_t}\in\Pi^{t,h_t}$ we have that
\begin{equation}\label{eq:L21}
P^{\pi^{t,h_t}}_{\theta,t+1,T}(A_{t+1}\times \ldots \times A_T)=
P^\pi_\theta(X_{t+1}\in A_{t+1},\ldots,X_{T}\in A_{T}|H_t=h_t).
\end{equation}
\end{lemma}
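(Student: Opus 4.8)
The plan is to exploit that $\cX$ is finite, so that every object in \eqref{eq:L21} is determined by its values on singletons and the conditional probability on the right reduces to an elementary ratio. Concretely, for fixed $h_t=(x_1,\dots,x_t)$ both sides of \eqref{eq:L21}, viewed as functions of the cylinder $A_{t+1}\times\cdots\times A_T$, are probability measures on $\cX^{T-t}$; hence it suffices to establish the identity when each $A_s=\{x_s\}$ is a singleton, and then recover the general cylinder case by finite additivity, i.e. summation over the points of $\cX^{T-t}$.

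First I would evaluate the two probabilities entering the conditional expression on the right. Applying the defining product formula \eqref{eq:I-T-1} to the singleton cylinder $\{x_1\}\times\cdots\times\{x_T\}$ collapses every integral to a kernel evaluation, giving the telescoping product $P^\pi_\theta(X_1=x_1,\dots,X_T=x_T)=\prod_{s=2}^{T}K_\theta(\{x_s\}\mid x_{s-1},\pi_{s-1}(x_1,\dots,x_{s-1}))$, and the same formula with $A_{t+1}=\cdots=A_T=\cX$ yields $P^\pi_\theta(H_t=h_t)=\prod_{s=2}^{t}K_\theta(\{x_s\}\mid x_{s-1},\pi_{s-1}(x_1,\dots,x_{s-1}))$. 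Forming the ratio cancels the head factors $s=2,\dots,t$ and leaves $\prod_{s=t+1}^{T}K_\theta(\{x_s\}\mid x_{s-1},\pi_{s-1}(x_1,\dots,x_{s-1}))$.

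Next I would compute the left-hand side on the same singleton. By \eqref{eq:I-T} the tail measure evaluates to $P^{\pi^{t,h_t}}_{\theta,t+1,T}(\{x_{t+1}\}\times\cdots\times\{x_T\})=\prod_{s=t+1}^{T}K_\theta(\{x_s\}\mid x_{s-1},\pi_{s-1}(h_t,x_{t+1},\dots,x_{s-1}))$. The observation that matches the two products is the trivial identity $(h_t,x_{t+1},\dots,x_{s-1})=(x_1,\dots,x_{s-1})$, valid because $h_t=(x_1,\dots,x_t)$; consequently $\pi_{s-1}(h_t,x_{t+1},\dots,x_{s-1})=\pi_{s-1}(x_1,\dots,x_{s-1})$ for every $s$, and in particular $\pi_t(h_t)=\pi_t(x_1,\dots,x_t)$ for the first tail kernel. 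Thus the two products coincide term by term, proving \eqref{eq:L21} on singletons and hence, by summation, in general.

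The only genuinely delicate point is the well-definedness of the conditional probability on the right of \eqref{eq:L21}: the ratio argument requires $P^\pi_\theta(H_t=h_t)>0$, whereas the standing positivity assumption is imposed on the $\xi_1$-averaged measure $P^\pi$ rather than on each $P^\pi_\theta$. I would handle this either by restricting attention to those $\theta$ for which the history has positive probability, or—cleaner and version-independent—by reading \eqref{eq:L21} as the assertion that the tail measure is a regular conditional distribution of $(X_{t+1},\dots,X_T)$ given $H_t$, which I would verify through its defining averaging identity $P^\pi_\theta(H_t=h_t,\,X_{t+1}\in A_{t+1},\dots,X_T\in A_T)=P^\pi_\theta(H_t=h_t)\,P^{\pi^{t,h_t}}_{\theta,t+1,T}(A_{t+1}\times\cdots\times A_T)$; by the singleton computation above this holds whether or not the history has positive mass. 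Everything else is routine bookkeeping of indices and integration variables.
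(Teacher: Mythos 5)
Your proof is correct and takes essentially the same route as the paper's: both evaluate the law defined by \eqref{eq:I-T-1} on the event $\{H_t=h_t\}$ intersected with the tail cylinder, recognize the resulting kernel product as $P^{\pi^{t,h_t}}_{\theta,t+1,T}$ of \eqref{eq:I-T} multiplied by $P^\pi_\theta(H_t=h_t)$, and divide, so your reduction to singletons is only a cosmetic reorganization of that computation. Your closing observation is a genuine (if small) refinement: the paper's proof also divides by $P^\pi_\theta(H_t=h_t)$ even though the standing positivity assumption concerns only the averaged measure $P^\pi$, and your reading of \eqref{eq:L21} through the multiplicative identity $P^\pi_\theta\big(H_t=h_t,\,X_{t+1}\in A_{t+1},\dots,X_T\in A_T\big)=P^\pi_\theta(H_t=h_t)\,P^{\pi^{t,h_t}}_{\theta,t+1,T}(A_{t+1}\times\cdots\times A_T)$ handles the degenerate case cleanly for every $\theta$.
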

\begin{proof}
First, note that\footnote{To further simplify the notation we write $K_\theta(x|...)$ in place of $K_\theta(\{x\}|...)$. In a similar way, for a probability measure $Q$ on $\cQ$, and $y\in\cQ$, we may write $Q(y)$ instead of $Q(\set{y})$.}
$$
P_\theta^\pi(X_1=x_1,\ldots,X_t=x_t) = K_\theta(x_2|x_1,\pi_1(x_1)) \, K_\theta(x_3|x_2,\pi_2(x_1,x_2))\cdots K_\theta(x_t|x_{t-1},\pi_{t-1}(x_1,\ldots x_t)).
$$
On  the other hand,
\begin{align*}
P^\pi_\theta & (X_{t+1}\in A_{t+1},\ldots,X_{T}\in A_{T}, H_t=h_t) = P^\pi_\theta(X_1=x_1,\ldots, X_t=x_t, X_{t+1}\in A_{t+1},\ldots,X_{T}\in A_{T}) \\
& = \int_{\set{x_1}}\ldots \int_{\set{x_t}} P_{\theta, t+1,T}^{\pi^{t, \bar h_t}}(A_{t+1}\times\cdots\times A_T)  K_\theta(d\bar x_t| \bar x_{t-1}, \pi_{t-1}(\bar h_{t-1})) \cdots K_\theta(d \bar x_2|\bar x_1,\pi_1(\bar x_1)) \delta_{x_1}(y)  \\
& = P_{\theta, t+1,T}^{\pi^{t, h_t}}(A_{t+1}\times\cdots\times A_T)  K_\theta(x_t| x_{t-1}, \pi_{t-1}( h_{t-1})) \cdots K_\theta(x_2| x_1,\pi_1( x_1)).
\end{align*}
Combining the above we immediately have \eqref{eq:L21}.
\end{proof}

\medskip

For future reference we denote by $P^{\pi^{t,h_t}}_{\theta,t+1}$ a measure on $(\mathcal{X},2^\mathcal{X})$ defined as
\begin{equation}\label{eq:new-measure}
	P^{\pi^{t,h_t}}_{\theta,t+1}(B)=P^{\pi^{t,h_t}}_{\theta,t+1,T}(B\times \mathcal{X}^{T-t-1}) = P^{\pi}_{\theta, t+1,T}(X_{t+1}\in B).
\end{equation}
Thus, we have that, for $t\leq T-1$,
\begin{equation}\label{eq:P-theta-t+1}
P_{\theta, t+1}^{\pi^{t,h_t}}(B) = \int_B\, K_\theta(dx_{t+1} \mid x_t, \pi_t(h_t))=K_\theta(B \mid x_t, \pi_t(h_t))=
P^\pi_\theta(X_{t+1}\in B|H_t=h_t).
\end{equation}
Next, we construct a probability measure $P^{\pi^{t,h_t}}_{t+1,T}$ on $\cX^{T-t}\times \boldsymbol \Theta$ as
\begin{align}\label{eq:I-T-integrated2}
P^{\pi^{t,h_t}}_{t+1,T}(A\times D)=\int_D\, P^{\pi^{t,h_t}}_{\theta,t+1,T}(A)\;\xi^{\pi,h_t}_{t}(d\theta),\quad A\in \underbrace{2^{\cX} \otimes \cdots \otimes 2^{\cX}}_{T-t\text{ times}},\ D\in 2^{\boldsymbol \Theta},
\end{align}
where $\xi^{\pi,h_t}_{t}\in \cP(\boldsymbol \Theta)$, is given as
\begin{equation}\label{eq:xi-tCond}
\xi^{\pi,h_t}_{t}(D)= P^\pi(\Theta \in D \, | \, \widehat H_t = h_t),\ \textrm{for} \ t=2,\ldots,T, \quad \textrm{and} \quad \xi^{\pi,h_1}_{1}(D)=\xi_1(D).
\end{equation}
We note that we have the following indenty for the conditional measure
\begin{equation}\label{ii-1}
P_{t+1,T}^{\pi^{t,h_t}} (A\mid \Theta=\theta) = P_{\theta, t+1,T}^{\pi^{t,h_t}}(A), \quad \theta\in\boldsymbol{\Theta}, \ A\in 2^{\cX} \otimes \cdots \otimes 2^{\cX}.
\end{equation}

\begin{lemma} \label{lemma:2.2}
Let $A\in \underbrace{2^{\cX} \otimes \cdots \otimes 2^{\cX}}_{T-t\text{ times}}$, $D\in 2^{\boldsymbol \Theta}$, $h_t\in\cH_t$, and $\pi\in\Pi$. Then:\\
(i) We have
\begin{equation}\label{i-1}
P^{\pi^{t,h_t}}_{t+1,T}(A\times D)=
P^\pi((\widehat X_{t+1},\widehat X_{t+2},\ldots,\widehat X_{T-1},\widehat X_{T})\in A,\boldsymbol \Theta\in D|\widehat H_t=h_t).
\end{equation}
\end{lemma}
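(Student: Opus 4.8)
The plan is to evaluate the right-hand side of \eqref{i-1} directly from the definition of conditional probability, disintegrate it over $\theta$ via the Bayesian construction \eqref{eq:I-T-integrated}, apply the factorization contained in Lemma~\ref{lemma:21}, and finally recognize the resulting $\theta$-weight as the posterior $\xi^{\pi,h_t}_t$, at which point \eqref{eq:I-T-integrated2} closes the argument. The standing assumption $P^\pi(\widehat H_t = h_t) > 0$ makes all the conditioning well defined.

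First I would write
\[
P^\pi\big((\widehat X_{t+1},\ldots,\widehat X_T)\in A,\, \Theta\in D \mid \widehat H_t = h_t\big) = \frac{P^\pi\big((\widehat X_{t+1},\ldots,\widehat X_T)\in A,\, \Theta\in D,\, \widehat H_t = h_t\big)}{P^\pi(\widehat H_t = h_t)}.
\]
Since $\{\widehat H_t = h_t\} = \{H_t = h_t\}\times\boldsymbol\Theta$ and $\widehat X$ ignores the $\Theta$-coordinate, the event in the numerator is a product set, so \eqref{eq:I-T-integrated} yields the disintegration $\int_D P^\pi_\theta\big((X_{t+1},\ldots,X_T)\in A,\, H_t = h_t\big)\, \xi_1(d\theta)$ for the numerator, and $P^\pi(\widehat H_t = h_t) = \int_{\boldsymbol\Theta} P^\pi_\theta(H_t = h_t)\, \xi_1(d\theta)$ for the denominator.

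The key step is the factorization $P^\pi_\theta\big((X_{t+1},\ldots,X_T)\in A,\, H_t = h_t\big) = P^{\pi^{t,h_t}}_{\theta,t+1,T}(A)\, P^\pi_\theta(H_t = h_t)$, which is exactly what the proof of Lemma~\ref{lemma:21} delivers on rectangles (there the product of transition kernels is precisely $P^\pi_\theta(H_t=h_t)$), and which extends to every $A$ in the product $\sigma$-field by additivity, because $\cX^{T-t}$ is finite and each atom is a rectangle. For any $\theta$ with $P^\pi_\theta(H_t = h_t) = 0$ both sides vanish, so the identity persists and such $\theta$ carry no posterior mass. Substituting, the conditional probability becomes
\[
\frac{\int_D P^{\pi^{t,h_t}}_{\theta,t+1,T}(A)\, P^\pi_\theta(H_t = h_t)\, \xi_1(d\theta)}{\int_{\boldsymbol\Theta} P^\pi_\theta(H_t = h_t)\, \xi_1(d\theta)}.
\]

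Finally I would identify the posterior. Bayes' rule applied to \eqref{eq:I-T-integrated} gives $\xi^{\pi,h_t}_t(d\theta) = P^\pi_\theta(H_t = h_t)\, \xi_1(d\theta)\big/\int_{\boldsymbol\Theta} P^\pi_{\theta'}(H_t = h_t)\, \xi_1(d\theta')$, which is precisely the normalized weight appearing above and matches the definition \eqref{eq:xi-tCond} of $\xi^{\pi,h_t}_t$. Hence the ratio collapses to $\int_D P^{\pi^{t,h_t}}_{\theta,t+1,T}(A)\, \xi^{\pi,h_t}_t(d\theta)$, which equals $P^{\pi^{t,h_t}}_{t+1,T}(A\times D)$ by \eqref{eq:I-T-integrated2}, as claimed. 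I expect the only genuinely delicate point to be this last identification: one must verify that the normalizing constant produced by Bayes' rule is exactly the denominator coming out of the disintegration, so that the two $\xi_1$-integrals combine cleanly into a single integral against the posterior; by contrast, the rectangle-to-general-$A$ extension of Lemma~\ref{lemma:21} is routine given finiteness.
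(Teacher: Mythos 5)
Your proposal is correct and follows essentially the same route as the paper's own proof: both expand the conditional probability via \eqref{eq:I-T-integrated}, use Lemma~\ref{lemma:21} to replace $P^\pi_\theta((X_{t+1},\ldots,X_T)\in A,\,H_t=h_t)$ by $P^{\pi^{t,h_t}}_{\theta,t+1,T}(A)\,P^\pi_\theta(H_t=h_t)$, identify the weight $P^\pi_\theta(H_t=h_t)\,\xi_1(d\theta)\big/P^\pi(\widehat H_t=h_t)$ with the posterior $\xi^{\pi,h_t}_t$ from \eqref{eq:xi-tCond}, and close with \eqref{eq:I-T-integrated2}. The only differences are cosmetic (you work from the right-hand side toward the left, and you add the harmless observations about zero-mass $\theta$ and the rectangle-to-general-$A$ extension, which the paper leaves implicit).
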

\begin{proof}
First, in view of \eqref{eq:xi-tCond} and \eqref{eq:I-T-integrated} we note that
\begin{equation*}
\xi_t^{\pi,h_t} (D ) = \frac{\int_D P_\theta^\pi(H_t=h_t)\; \xi_1(d\theta)}{P^\pi(\widehat H_t=h_t)}.
\end{equation*}
Thus,
\begin{equation}\label{eq:L22-2}
P^{\pi^{t,h_t}}_{t+1,T}(A\times D)=  \frac{\int_D  P^{\pi^{t,h_t}}_{\theta,t+1,T}(A) P_\theta^\pi(H_t=h_t) \;\xi_1(d\theta)}{ P^\pi(\widehat H_t=h_t)}.
\end{equation}
On the other hand, using \eqref{eq:I-T-integrated} and Lemma~\ref{lemma:21}, we have
\begin{align*}
& P^\pi((\widehat X_{t+1},\widehat X_{t+2},\ldots,\widehat X_{T-1},\widehat X_{T})\in A, \Theta\in D|\widehat H_t=h_t)\\
& = \frac{\int_D P_\theta^\pi((X_{t+1}, \ldots, X_T)\in A, H_t=h_t) \;\xi_1(d\theta)}{P^\pi (\widehat H_t=h_t)} \\
& =  \frac{\int_D P_\theta^\pi((X_{t+1}, \ldots, X_T)\in A \mid H_t=h_t) P^\pi_\theta(H_t=h_t)\; \xi_1(d\theta)}{P^\pi (\widehat H_t=h_t)} \\
& = \frac{\int_D P_{\theta, t+1}^{\pi^{t,h_t}}(A) P^\pi_\theta(H_t=h_t) \;\xi_1(d\theta)}{P^\pi (\widehat H_t=h_t)}.
\end{align*}
This, combined with \eqref{eq:L22-2} concludes the proof of part (i).

\end{proof}

\begin{remark}
Formally, taking $T=t+1$ in \eqref{ii-1} we obtain
\begin{equation}\label{ii-1-new}
P^{\pi^{t,h_t}}_{t+1,t+1|\theta}(A)=P^{\pi^{t,h_t}}_{\theta,t+1,t+1}(A)=P^{\pi^{t,h_t}}_{\theta,t+1}(A),
\end{equation}
where in the last equality we used \eqref{eq:new-measure}.

\end{remark}

\begin{lemma}\label{cacy-cacy} Let $t\in \set{1,\ldots,T-1}$, and let
 $F$ be a function on $\cX^{T-t} \times \bTheta$. Then, for each $h_t\in \cH_t$ we have
\begin{align}
E^\pi [ &F(\widehat X_{t+1},\widehat X_{t+2},\ldots,\widehat X_{T-1},\widehat X_{T},\Theta)  \mid \widehat H_t=h_t] = \nonumber \\
& \int_\bTheta \int_{\cX^{T-t}} F(x_{t+1},\ldots,x_T,\theta)P^{\pi^{T-1,h_{T-1}}}_{\theta,T}(dx_{T})\cdots P^{\pi^{t,h_t}}_{\theta,t+1}(dx_{t+1})\xi^{\pi,h_t}_{t}(d\theta),
\label{eq:condExpect}
\end{align}
where $E^\pi$ denotes the expectation with respect to probability $P^\pi$.
\end{lemma}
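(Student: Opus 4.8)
The plan is to reduce the claim to the conditional-law identities already established and then to unfold the resulting mixture measure into the stated iterated integral over one-step kernels. Since $\cX$, $\cU$, and $\bTheta$ are all finite, both sides of \eqref{eq:condExpect} are linear in $F$, so it suffices to verify the identity when $F=\1_{A\times D}$ is the indicator of a rectangle $A=A_{t+1}\times\cdots\times A_T$ with $A_s\subset\cX$ and $D\subset\bTheta$; the general case then follows by writing an arbitrary $F$ on the finite set $\cX^{T-t}\times\bTheta$ as a finite linear combination of such indicators. (Indeed, singletons are themselves rectangles, so the span argument is immediate.)

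First I would handle the left-hand side. For $F=\1_{A\times D}$ the conditional expectation is $E^\pi[\1_{A\times D}(\widehat X_{t+1},\ldots,\widehat X_T,\Theta)\mid\widehat H_t=h_t]=P^\pi((\widehat X_{t+1},\ldots,\widehat X_T)\in A,\ \Theta\in D\mid\widehat H_t=h_t)$, which by Lemma~\ref{lemma:2.2}(i), i.e.\ by \eqref{i-1}, equals $P^{\pi^{t,h_t}}_{t+1,T}(A\times D)$. Invoking the defining mixture \eqref{eq:I-T-integrated2}, this in turn equals $\int_D P^{\pi^{t,h_t}}_{\theta,t+1,T}(A)\,\xi^{\pi,h_t}_t(d\theta)$.

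Second I would handle the right-hand side for the same $F$. Here the $\theta$-integral and the set $D$ factor out at once, leaving the inner state integral $\int_A P^{\pi^{T-1,h_{T-1}}}_{\theta,T}(dx_T)\cdots P^{\pi^{t,h_t}}_{\theta,t+1}(dx_{t+1})$. Using \eqref{eq:P-theta-t+1} to replace each one-step factor $P^{\pi^{s,h_s}}_{\theta,s+1}(dx_{s+1})$ by the transition kernel $K_\theta(dx_{s+1}\mid x_s,\pi_s(h_s))$ — with the bookkeeping that inside the iterated integral the history is the running one $h_s=(h_t,x_{t+1},\ldots,x_s)$ — this iterated integral is precisely the product-of-kernels expression defining $P^{\pi^{t,h_t}}_{\theta,t+1,T}(A)$ in \eqref{eq:I-T}. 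Hence the right-hand side equals $\int_D P^{\pi^{t,h_t}}_{\theta,t+1,T}(A)\,\xi^{\pi,h_t}_t(d\theta)$, matching the left-hand side and settling the rectangle case.

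I expect the only genuine obstacle to lie in this last step: carefully matching the recursively built histories $h_s$ implicit in the one-step measures $P^{\pi^{s,h_s}}_{\theta,s+1}$ on the right-hand side of \eqref{eq:condExpect} with the policy arguments $\pi_s(h_t,x_{t+1},\ldots,x_s)$ appearing in \eqref{eq:I-T}, and confirming that the order of iterated integration (innermost in $x_T$, then outward to $x_{t+1}$, and finally $\theta$) is consistent with the kernel dependencies. Everything else is a direct appeal to the previously established identities together with linearity over the finite state, action, and parameter spaces.
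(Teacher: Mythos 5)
Your proof is correct and follows essentially the same route as the paper's: identify the conditional law of $(\widehat X_{t+1},\ldots,\widehat X_T,\Theta)$ given $\widehat H_t=h_t$ with $P^{\pi^{t,h_t}}_{t+1,T}$ via Lemma~\ref{lemma:2.2}, disintegrate it by \eqref{eq:I-T-integrated2}, and unfold the one-step kernels via \eqref{eq:I-T} and \eqref{eq:P-theta-t+1}. The only difference is presentational: you make explicit (via linearity over rectangle indicators and the running-history bookkeeping) what the paper expresses directly as an identity between measures.
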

\begin{proof}
In view of Lemma~\ref{lemma:2.2}, we have
$$
P^\pi(dx_{t+1}, \ldots, dx_T; d\theta \mid \widehat{H}_t=h_t) = P^{\pi^{t,h_t}}_{t+1,T} (dx_{t+1},\ldots,dx_T; d\theta).
$$
Consequently, by \eqref{eq:I-T-integrated2}, we continue
$$
P^{\pi^{t,h_t}}_{t+1,T} (dx_{t+1},\ldots,dx_T; d\theta) = P^{\pi^{t,h_t}}_{\theta,t+1,T}(dx_{t+1},\ldots,dx_T) \xi_t^{\pi^{t,h_t}} (d\theta).
$$
This combined with  \eqref{eq:I-T} and \eqref{eq:P-theta-t+1} yields the identity \eqref{eq:condExpect}.
\end{proof}

For future reference we denote by $P^{\pi^{t,h_t}}_{t+1}$ the measure on $\cX \times \boldsymbol \Theta$ defined as
\begin{equation}\label{eq:new-measure-1}
P^{\pi^{t,h_t}}_{t+1}(B\times D)=P^{\pi^{t,h_t}}_{t+1,T}(B\times \mathcal{X}^{T-t-1}\times D)= \, P^\pi(\widehat X_{t+1}\in B, \Theta \in D \, | \, \widehat H_t =  h_t),
\end{equation}
for $t=1,\ldots,T-1$, with the convention, employed throughout, that $B\times \mathcal{X}^{0}\times D=B\times D.$

For $t=T$ and $h_T\in \cH_T$ we construct a measure $P^{\pi^{T,h_T}}_{T+1,T}$ on $\boldsymbol \Theta$ as
\[
P^{\pi^{T,h_T}}_{T+1,T}(D) = P^\pi(\Theta \in D \, | \, \widehat H_T = h_T)=\xi^{\pi,h_T}_{T}(D).
\]

Given that a strategy $\pi$ is used, then at each time $t\in\cT$ a random cost $Z^{\pi}_{\theta^*,t}$
is incurred, with
\[
 Z^{\pi}_{\theta,t} =c_t(X_t,\pi_t,\theta),
\]
where $c_t:\cX\times\cU\times\boldsymbol \Theta\to \bR_+$.

\begin{remark}
It is important to note that even though $X_t$ and $\pi_t$ are observed at time $t$,  the actual cost $c_t(X_t,\pi_t,\theta^*)$ is not known {(or observed) at time $t$} as $\theta^*$ is not known. The dependence of both the transition kernel and the accrued costs on the unknown parameter is an important practical situation, leading to non-standard and non-trivial risk-averse Markov decision problems.
\end{remark}

To proceed, for each $t\in\cT$  and each history $h_t \in \cH_t$ we denote
\begin{equation}\label{eq:cost1}
Z^{\pi,h_t}_{\theta,t,t} =c_t(x_t,\pi_t(h_t),\theta),
\end{equation}
and for each $s=t+1,\dots,T$  we put
\begin{equation}\label{eq:cost2}
 Z^{\pi,h_t,x_{t+1},\ldots,x_s}_{\theta,t,s} =c_s(x_s,\pi^{t,h_t}_s(x_{t+1},\ldots,x_s),\theta).
\end{equation}

Note that, for a fixed strategy $\pi$ and a fixed  $h_t \in \cH_t$, we have that $c_t(x_t,\pi_t(h_t),\cdot)$ is a function on $\boldsymbol \Theta$, and
$c_s(\cdot,\pi^{t,h_t}_s(\cdot,\ldots,\cdot),\theta)$ is a function on $\cX^{s-t} \times \boldsymbol \Theta$.

\section{Risk Filters for MDPs with Model Uncertainty}
\label{s:filters}

\subsection{Dynamic risk filters}

For $t=1,\ldots,T-1$ and $s=t,\ldots,T$,  we denote by $\cZ_t^\mathcal{X}$ and  $\cZ_{t,s}$  the spaces of real valued functions on $\cX^t$ and $\cX^{s-t} \times \boldsymbol \Theta$, respectively, where $\cX^{0} \times \boldsymbol \Theta:=\boldsymbol \Theta$, so that $\cZ_{t,t}$ is the space of real valued functions on $\boldsymbol \Theta$. For  $Z_{t,s},W_{t,s}\in \cZ_{t,s}$, the comparison between these functions is understood point-wise;  $Z_{t,s} \le W_{t,s}$ means that  $Z_{t,s}(x_{t+1},\ldots,x_s,\theta) \le W_{t,s}(x_{t+1},\ldots,x_s,\theta) $ for all $(x_{t+1},\ldots,x_s,\theta) \in \cX^{s-t} \times \boldsymbol \Theta.$

For any policy $\pi \in \varPi$, our objective is to evaluate at each time $t\in\cT$, the riskiness of the sequence of costs $Z^{\pi,h_t}_{\Theta,t},Z^{\pi,h_t,X_{t+1}}_{\Theta,t,t+1},  \dots, Z^{\pi,h_t,X_{t+1},\ldots,X_T}_{\Theta,t,T}$, given history $h_t$, in such a way that the evaluation is $\cF_t^X$-measurable.
We denote  by
\[
{\cZ}^{t,T} = {\cZ}_{t,t}\times {\cZ}_{t,t+1}\times \dots \times {\cZ}_{t,T}
\]
the space of \textit{conditional cost functions}\footnote{The term \textit{conditional} refers to the fact that at any time $t$ we consider cost functions that depend on a history $h_t$.} in periods $t,\dots,T$.

For $t=1,\ldots,T$ and $s=t,\ldots,T$ we also use $\cP_{t,s}$ to denote the space of probability measures on the space of paths starting at time $t$
and ending at time $s$, and on  realizations of the parameter $\theta$, that is on the space $\cX^{s-t+1}\times \boldsymbol \Theta$. Additionally,   we understand $\cP_{T+1,T}$ as $\cP(\boldsymbol \Theta)$, because no future paths are possible. Note, in particular, that $P^{\pi^{t,h_t}}_{t+1,T}\in \cP_{t+1,T}$, for $t\in\cT$, and
$P^{\pi^{t,h_t}}_{t+1,t+1}\in \cP_{t+1,t+1}$.

Observe that at time $t=1,\ldots,T-1$ we know the history $h_t$, and, for any policy $\pi \in \varPi$  (in principle), we can evaluate the distribution
of $(X_{t+1},\dots,X_T,\Theta)$ under the measure  $P^{\pi^{t,h_t}}_{t+1,T}$.

We proceed with stating three key definitions.

\begin{definition} \label{def:CRE}
For a fixed $t\in\cT$, a mapping $\rho_t: {\cZ}^{t,T} \times \cP_{t+1,T}\to  \bR$, is called a \emph{conditional risk filter}.
\end{definition}	
Note that, in particular, for any $(Z_{t,t},\dots,Z_{t,T})\in {\cZ}^{t,T}$ and $\pi \in \Pi$ we have
	\[
	\rho_t(Z_{t,t},\dots,Z_{t,T};  P_{t+1,T})=R(h_t), \ \textrm{for all}\ h_t\in \mathcal{X}^t,
	\]
	for some function $R: \mathcal{X}^t \to \bR$.

\begin{definition} \label{basic-prop-pomdp} Let $t\in\cT$. A conditional risk filter $\rho_t$  
\begin{tightlist}{iv}
\item  is \emph{normalized} if $\rho_t(0,0,\dots,0; P_{t+1,T})=0$ for all $P_{t+1,T}\in \cP_{t+1,T}$;

\item is \emph{monotonic} if $\rho_t(Z_{t,t},\dots,Z_{t,T};  P_{t+1,T}) \le \rho_t(W_{t,t},\dots,W_{t,T};  P_{t+1,T})$
for all  $P_{t+1,T}\in \cP_{t+1,T}$, and all $(Z_{t,t},\dots,Z_{t,T})$ and $(W_{t,t},\dots, W_{t,T})$
in  ${\cZ}^{t,T}$, such that $Z_{t,s} \le W_{t,s}$ for all $s\in\cT_{t}$;

\item  is \emph{translation invariant} if  for all $(Z_{t,t},\dots,Z_{t,T}) \in {\cZ}^{t,T}$, all $V\in  \bR$, and all $P_{t+1,T}\in \cP_{t+1,T}$,
		$$
		\rho_t(V+ Z_{t,t},Z_{t,t+1},\dots,Z_{t,T};  P_{t+1,T})=V+ \rho_t(Z_{t,t},Z_{t,t+1},\dots,Z_{t,T}; P_{t+1,T});
		$$
\item has the \emph{support property}, if
$$
\rho_t\big(Z_{t,t},\dots,Z_{t,T};  P_{t+1,T}\big) =
\rho_t\big(Z_{t,t}\1_{\text{\rm supp}_{t}(P_{t+1,T})},\dots,Z_{t,T}\1_{\text{\rm supp}_{T}(P_{t+1,T})};  P_{t+1,T}\big),
$$
for all $(Z_{t,t},\dots,Z_{t,T}) \in {\cZ}^{t,T}$, and all $P_{t+1,T}\in \cP_{t+1,T}$, and where $\supp_{s}(P_{t+1,T})$ denotes the projection of $\supp(P_{t+1,T})$ on $\cX^{s-t} \times \boldsymbol \Theta$, for $s\geq t$.
\end{tightlist}
\end{definition}

\begin{remark}\label{rem:locality}
	Let $s=t,\ldots,T$ and let $\set{Z^y_{s,T}, y\in\cY}$ be a family of functions parameterized by $y$, for some non-empty set $\cY$. Then, by the normalization property, for any $A\subset\cY$,  $y\in\cY$,  and $P\in\cP_{t+1,T}$, we have that
$$
\1_A(y) \rho_t(Z^y_{t,T}, \ldots, Z^y_{T,T}; P) =  \rho_t(\1_A(y)Z^y_{t,T}, \ldots, \1_A(y)Z^y_{T,T}; P).
$$
\end{remark}

\begin{definition} \label{def_DRE}
	A \emph{dynamic risk filter} $\rho=\big\{\rho_t\big\}_{t \in\cT} $ is a sequence of conditional risk filters $\rho_t: {\cZ}^{t,T} \times \cP_{t+1,T}\to \bR$. We say that it is normalized, monotonic,  translation invariant, or has the support property,  if all $\rho_t$, $t\in\cT$, satisfy the respective conditions of Definition \ref{basic-prop-pomdp}.
\end{definition}

\subsection{Parameter  Consistency}\label{s:param-consist}

Let $t=1,\ldots,T-1$ and $s=t,\ldots,T$. For any probability measure $P_{t,s}\in\cP_{t,s}$, we denote by $P_{t,s|\Theta}(\cdot,\cdot)$, the stochastic kernel from $\boldsymbol \Theta$ to $\mathcal{X}^{s-t+1}$ defined as
\begin{equation}\label{eq:ker}
P_{t,s|\Theta}(\theta,A)=\frac{P_{t,s}(A\times \{\theta\})}{P_{t,s}(\mathcal{X}^{s-t+1}\times \{\theta\})},\quad A\subset \mathcal{X}^{s-t+1},\ \theta \in \boldsymbol \Theta.
\end{equation}
The corresponding marginal on $\boldsymbol{\Theta}$ is denoted by $P_{t,s,\Theta}$, so that
\begin{equation}\label{eq:marg}
P_{t,s, \Theta}(D)=P_{t,s}(\mathcal{X}^{s-t+1}\times D),\quad D \subset \boldsymbol \Theta.
\end{equation}
Clearly, the measure $P_{t,s}$ admits disintegration
\[
P_{t,s}(A\times B)=\int_B\, P_{t,s|\theta}(A) P_{t,s,\Theta}(d\theta)=:P_{t,s,\Theta } \circledast P_{t,s|\Theta}(A\times B),
\]
where we use a simplified notation
\begin{equation}\label{eq:simple}
P_{t,s|\theta}(A):=P_{t,s|\Theta}(\theta,A).
\end{equation}
We note that for any stochastic kernel $\kappa_{s,t}(\cdot,\cdot)$ from $\boldsymbol \Theta$ to $\mathcal{X}^{s-t}$ and for any probability measure $\mu$ on $2^{\boldsymbol \Theta}$ one can construct a unique probability measure on the product space $\cX^{s-t+1}\times \boldsymbol{\Theta}$ as
\[
m_{t,s}(A\times B)=\int_B\, \kappa_{t,s}(\theta,A)\; \mu(d\theta)=:\mu \circledast \kappa_{t,s}(A\times B).
\]
In particular, with $\mu=\delta_\theta$ and $\kappa_{t,s}=P_{t,s|{\Theta}}$, with $P_{t,s}\in\cP_{t,s}$, we get
\begin{equation}\label{eq:delta}
m_{t,s}(A\times B)=\delta_{ \theta} \circledast P_{t,s|{\Theta}}(A\times B)=P_{t,s|\theta}(A)\1_{B}( \theta)= P_{t,s|\theta}(A)\delta_{ \theta}(B).
\end{equation}

\begin{remark}\label{rem:formal}
In our convention, $P_{T+1,T}(\cdot)$ is a measure on $\bTheta$. This means that, formally,  $P_{T+1,T,\boldsymbol \Theta}=P_{T+1,T}$ and $P_{T+1,T|\boldsymbol \Theta}\equiv 1$,  in which case (formally)
$$
\delta_{\theta} \circledast P_{T+1,T|\Theta}=\delta_{\theta} .
$$
\end{remark}

\begin{example}\label{ex:3.6}
Fix $t\in\cT$, $h_t\in \mathcal{H}_t$ and  $\pi \in \Pi$. Take $P_{t+1,T}=P^{\pi^{t,h_t}}_{t+1,T}\in  \cP_{t+1,T}$ and $P_{t+1,t+1}=P^{\pi^{t,h_t}}_{t+1,t+1}\in  \cP_{t+1,t+1}$. Then,
\begin{equation}\label{three-eqs}
P_{t+1,T|\theta}=P^{\pi^{t,h_t}}_{\theta,t+1,T},\quad P_{t+1,t+1|\theta} = P_{\theta, t+1}^{\pi^{t,h_t}},\quad P_{t+1,T,\Theta} = \xi_t^{\pi^{t, h_t}}.
\end{equation}
The first equality above comes from \eqref{ii-1}. The second one comes from \eqref{ii-1-new}. The third one is just \eqref{eq:I-T-integrated2} with $A=\mathcal{X}^{T-t} $. Note that \eqref{eq:delta} and \eqref{three-eqs} imply that
\begin{equation}\label{eq:delta-1}
\delta_{ \theta} \circledast P^{\pi^{t,h_t}}_{t+1,T|{\Theta}}(A\times B)= \delta_{ \theta} \otimes P^{\pi^{t,h_t}}_{\theta,t+1,T}(A\times B),\quad
\delta_{ \theta} \circledast P^{\pi^{t,h_t}}_{t+1,t+1|{\Theta}}(A\times B)= \delta_{ \theta} \otimes P^{\pi^{t,h_t}}_{\theta,t+1,t+1}(A\times B).
\end{equation}

\end{example}

We introduce the following key concept.

\begin{definition} \label{cond-con}

A conditional risk filter $\rho_t: {\cZ}^{t,T}\times \cP_{t+1,T} \to \bR$ is \emph{parameter consistent}, if
for all $(Z_{t,t},\break \dots,Z_{t,T})$, $(W_{t,t},\dots,W_{t,T})\in\cZ^{t,T}$, and all $P_{t+1,T}, Q_{t+1,T}\in \cP_{t+1,T}$
 the relations
\[
 P_{t+1,T,\Theta}= Q_{t+1,T,\Theta} \notag \\
\]
and
\begin{equation}
\rho_t\big(Z_{t,t},\dots,Z_{t,T};  \delta_\theta \circledast P_{t+1,T|\Theta}\big)
\leq \rho_t\big(W_{t,t},\dots,W_{t,T};  \delta_\theta \circledast Q_{t+1,T|\Theta}\big),\quad \textrm{for all} \ \theta \in \boldsymbol \Theta,\label{eq:paramCon1}
\end{equation}
imply that
\begin{equation}\label{eq:paramCon2}
\rho_t\big(Z_{t,t},\dots,Z_{t,T};   P_{t+1,T}\big) \leq \rho_t\big(W_{t,t},\dots,W_{t,T};   Q_{t+1,T}\big).
\end{equation}

\end{definition}
In words,  if the marginal distributions of $P$ and $Q$ on $\Theta$ are the same, and the conditional risk of $Z_{t:T}:=(Z_{t,t},\dots,Z_{t,T})$ under $P$ is not greater  than that of
$W_{t:T}:=(W_{t,t},\dots,W_{t,T})$ under $Q$ for every value of $\theta$, then the risk of $Z_{t:T}$ under $P$ should be not greater
than that of $W_{t:T}$ under $Q$.

\begin{remark}\label{rem:paramAtT}
Note that parameter consistency at $t=T$ follows from the support property, translation invariance, monotonicity, and normalization of $\rho_T$.
Indeed, first observe  that according to   Remark \ref{rem:formal} the equality $P_{T+1,T,\Theta}= Q_{T+1,T,\Theta}=1$ implies that  $P_{T+1,T}=Q_{T+1,T}$. Thus,
for any $\theta\in\boldsymbol{\Theta}$
\begin{align*}
\rho_T(Z_{T,T}, \delta_\theta) \leq  \rho_T(W_{T,T}, \delta_\theta) \quad &\Leftrightarrow \quad
\rho_T(Z_{T,T}(\theta), \delta_\theta) \leq  \rho_T(W_{T,T}(\theta), \delta_\theta) \quad \Leftrightarrow \\
Z_{T,T}(\theta) + \rho_T(0,\delta_\theta) \leq  W_{T,T}(\theta) + \rho_T(0, \delta_\theta) \quad &\Leftrightarrow \quad
Z_{T,T}(\theta) \leq W_{T,T}(\theta).
\end{align*}
By monotonicity, we have that
$$
\rho_T(Z_{T,T}, P_{T+1,T}) \leq  \rho_T(W_{T,T}, P_{T+1,T}) = \rho_T(W_{T,T}, Q_{T+1,T}).
$$
This remark is used in Proposition~\ref{prop:disintegration} and also in Theorem \ref{t:filter-structure}.
\end{remark}

We have the following risk decomposition formula.
\begin{theorem}\label{t:disintegration}
Take $t=1,\ldots,T$. If a conditional risk filter	$\rho_t: {{\cZ}^{t,T}}\times \cP_{t+1,T} \to {\bR}$ is parameter consistent, then there exists a mapping
		$\widehat \rho_t: \cZ_{t,t} \times \cP(\boldsymbol \Theta) \to \bR$ such that for all $Z_{t:T}$ and $P_{t+1,T}$,
\begin{equation}\label{disintegration}
\rho_t\big(Z_{t,t},\dots,Z_{t,T};   P_{t+1,T}\big) =
\widehat \rho_t \Big(  \set{\rho_t\big(Z_{t,t},\dots,Z_{t,T};  \delta_\theta \circledast  P_{t+1,T| \Theta}\big ),\, \theta \in \boldsymbol \Theta}; {P_{t+1,T,\Theta}}\Big).
\end{equation}
\end{theorem}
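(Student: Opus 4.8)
The plan is to construct $\widehat\rho_t$ by reading off, from any pair $(Z_{t:T},P_{t+1,T})\in\cZ^{t,T}\times\cP_{t+1,T}$, the two pieces of data appearing on the right-hand side of \eqref{disintegration} — the $\bTheta$-indexed profile of conditional risks and the marginal on $\bTheta$ — and declaring $\widehat\rho_t$ of that data to be the unconditional value $\rho_t(Z_{t:T};P_{t+1,T})$. Concretely, for such a pair write
$$
g_{Z,P}(\theta):=\rho_t\big(Z_{t,t},\dots,Z_{t,T};  \delta_\theta \circledast P_{t+1,T|\Theta}\big),\qquad \theta\in\bTheta,
$$
which is a real-valued function on $\bTheta$, hence an element of $\cZ_{t,t}$, and set $\mu_P:=P_{t+1,T,\Theta}\in\cP(\bTheta)$. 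On the collection of pairs $(g,\mu)$ that are realized as $(g_{Z,P},\mu_P)$ for at least one $(Z_{t:T},P_{t+1,T})$, I would define
$$
\widehat\rho_t(g;\mu):=\rho_t\big(Z_{t,t},\dots,Z_{t,T};  P_{t+1,T}\big)
$$
using any such representative, and extend $\widehat\rho_t$ arbitrarily (say by $0$) to the remaining pairs of $\cZ_{t,t}\times\cP(\bTheta)$. Granting well-definedness, \eqref{disintegration} then holds by construction, since its right-hand side is exactly $\widehat\rho_t(g_{Z,P};\mu_P)=\rho_t(Z_{t:T};P_{t+1,T})$.

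The crux is the well-definedness of $\widehat\rho_t$ on realized pairs, and this is exactly where parameter consistency is used. Suppose $(Z_{t:T},P_{t+1,T})$ and $(W_{t:T},Q_{t+1,T})$ realize the same $(g,\mu)$. Then $P_{t+1,T,\Theta}=\mu=Q_{t+1,T,\Theta}$, and for every $\theta\in\bTheta$,
$$
\rho_t\big(Z_{t:T};  \delta_\theta \circledast P_{t+1,T|\Theta}\big)=g(\theta)=\rho_t\big(W_{t:T};  \delta_\theta \circledast Q_{t+1,T|\Theta}\big).
$$
Each of these equalities holds in particular as the inequality $\le$, so Definition~\ref{cond-con} yields $\rho_t(Z_{t:T};P_{t+1,T})\le\rho_t(W_{t:T};Q_{t+1,T})$; interchanging the roles of the two pairs gives the reverse inequality, whence equality. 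Thus the value assigned to $(g,\mu)$ is independent of the representative, and $\widehat\rho_t$ is well defined.

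I do not anticipate a deeper obstacle: the substance is entirely the two-sided application of parameter consistency in the equality case, and the rest is bookkeeping about the domain of $\widehat\rho_t$. The only point needing a remark is the boundary case $t=T$, where $\cP_{T+1,T}=\cP(\bTheta)$, $\delta_\theta\circledast P_{T+1,T|\Theta}=\delta_\theta$ by Remark~\ref{rem:formal}, and $P_{T+1,T,\Theta}=P_{T+1,T}$; there parameter consistency of $\rho_T$ is supplied by Remark~\ref{rem:paramAtT}, and the construction above applies verbatim.
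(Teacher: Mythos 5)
Your proof is correct and takes essentially the same route as the paper's: the paper's entire argument is the well-definedness observation that equal marginals on $\boldsymbol\Theta$ together with equal conditional-risk profiles force equal unconditional risks (via the two-sided application of parameter consistency in the equality case), from which the existence of $\widehat\rho_t$ follows. You merely make explicit what the paper leaves implicit — the construction of $\widehat\rho_t$ on realized pairs with arbitrary extension elsewhere, and the boundary case $t=T$ — so there is no substantive difference.
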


\begin{proof}
Suppose two sequences $Z_{t:T}$ and $W_{t:T}$ in $\cZ^{t,T}$, and two
measures $P_{t+1,T}$ and $Q_{t+1,T}$ in $\cP_{t+1,T}$ are such that $P_{t+1,T,\boldsymbol \Theta} = Q_{t+1,T,\boldsymbol \Theta}$ and
\[
\rho_{t}\big(Z_{t,t},\dots,Z_{t,T};  \delta_\theta \circledast P_{t+1,T| \Theta}\big)
= \rho_{t}\big(W_{t,t},\dots,W_{t,T};  \delta_\theta \circledast Q_{t+1,T|\Theta}\big),\quad \forall\,\theta \in \boldsymbol \Theta .
\]
Then it follows from Definition \ref{cond-con} that
\[
\rho_t\big(Z_{t,t},\dots,Z_{t,T};   P_{t+1,T}\big)
=\rho_t\big(W_{t,t},\dots,W_{t,T};   Q_{t+1,T}\big).
\]
This means that formula \eqref{disintegration} is true.
\end{proof}

Thus, parameter consistency allows us to disintegrate the risk filtering task into two stages.
First, we evaluate the risk in a fully observed system, with the parameter $\theta$ fixed, and then
we integrate the results by using the operator $\widehat \rho_t$, which we call the \emph{marginal risk filter}.

\begin{proposition}\label{prop:disintegration}
Take $t=1,\ldots,T$.  If the conditional risk filter $\rho_t$ is parameter consistent, normalized,  monotonic, and has the translation invariant property, or the support property, then the mapping $\widehat \rho_t(\cdot;\cdot)$ has  the corresponding properties as well (in the sense indicated in the proof below).
\end{proposition}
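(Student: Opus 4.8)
The plan is to establish each asserted property of the marginal risk filter $\widehat\rho_t$ by exhibiting a convenient \emph{representation} of its argument and then transporting the corresponding property of $\rho_t$ through the disintegration formula \eqref{disintegration}. Fix $\mu\in\cP(\boldsymbol\Theta)$, and call a pair $(Z_{t,t},\dots,Z_{t,T};P_{t+1,T})$ with $P_{t+1,T,\Theta}=\mu$ a representation of the function $g\in\cZ_{t,t}$ defined by $g(\theta)=\rho_t(Z_{t,t},\dots,Z_{t,T};\delta_\theta\circledast P_{t+1,T|\Theta})$. By Theorem~\ref{t:disintegration} the number $\rho_t(Z_{t,t},\dots,Z_{t,T};P_{t+1,T})$ depends on such a pair only through $(g,\mu)$ and equals $\widehat\rho_t(g;\mu)$; this is the sense in which the properties below are understood, namely on inputs $g$ that admit a representation with the prescribed marginal. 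The case $t=T$ is covered by the same scheme, since parameter consistency of $\rho_T$ is then automatic by Remark~\ref{rem:paramAtT}.

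Normalization and monotonicity come first and are the cheapest. For normalization I take $Z_{t,t}=\dots=Z_{t,T}=0$ and any $P_{t+1,T}$ with marginal $\mu$; normalization of $\rho_t$ forces $g\equiv 0$ and gives $\widehat\rho_t(0;\mu)=\rho_t(0,\dots,0;P_{t+1,T})=0$. For monotonicity I take representations $(Z_{t:T};P_{t+1,T})$ of $g$ and $(W_{t:T};Q_{t+1,T})$ of $g'$ with common marginal $\mu$ and $g\le g'$ pointwise on $\boldsymbol\Theta$. The inequality $g\le g'$ is precisely hypothesis \eqref{eq:paramCon1}, so parameter consistency (Definition~\ref{cond-con}) yields \eqref{eq:paramCon2}, that is $\widehat\rho_t(g;\mu)\le\widehat\rho_t(g';\mu)$. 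Notably this step invokes only parameter consistency and never needs the explicit values of the $\theta$-wise risks.

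For translation invariance I assume $\rho_t$ is translation invariant and let $(Z_{t:T};P_{t+1,T})$ represent $g$. Applying translation invariance of $\rho_t$ to each measure $\delta_\theta\circledast P_{t+1,T|\Theta}$ shows that $(V+Z_{t,t},Z_{t,t+1},\dots,Z_{t,T};P_{t+1,T})$ represents $V+g$ with the same marginal, whence $\widehat\rho_t(V+g;\mu)=\rho_t(V+Z_{t,t},\dots;P_{t+1,T})=V+\rho_t(Z_{t,t},\dots;P_{t+1,T})=V+\widehat\rho_t(g;\mu)$.

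The support property is the delicate case, and I expect the support-projection bookkeeping here to be the main obstacle, since the disintegration kernel $P_{t+1,T|\Theta}(\theta,\cdot)$ of \eqref{eq:ker} is only determined for $\theta\in\supp(\mu)$ while parameter consistency compares $\theta$-wise risks over all $\theta$. Starting from a representation $(Z_{t:T};P_{t+1,T})$ of $g$, the support property of $\rho_t$ gives $\widehat\rho_t(g;\mu)=\rho_t(Z_{t,t}\1_{\supp_t(P_{t+1,T})},\dots,Z_{t,T}\1_{\supp_T(P_{t+1,T})};P_{t+1,T})$. The key observation is that $\supp_t(P_{t+1,T})$, the projection of $\supp(P_{t+1,T})$ onto $\boldsymbol\Theta$, equals $\supp(\mu)$, and that for $\theta\notin\supp(\mu)$ the fiber of each $\supp_s(P_{t+1,T})$ over $\theta$ is empty; hence the truncated cost vanishes identically on every $\{\theta\}$-fiber with $\theta\notin\supp(\mu)$. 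Computing the $\theta$-wise risks of the truncated representation then gives $g(\theta)$ for $\theta\in\supp(\mu)$ (the truncation is invisible there, by the support property applied to $\delta_\theta\circledast P_{t+1,T|\Theta}$) and $0$ for $\theta\notin\supp(\mu)$ (by normalization). Thus the truncated representation represents $g\1_{\supp(\mu)}$ and has the same overall risk, yielding $\widehat\rho_t(g;\mu)=\widehat\rho_t(g\1_{\supp(\mu)};\mu)$. The care required is entirely in the projection/fiber bookkeeping for the later components $Z_{t,s}$, $s>t$, whose truncations live on $\supp_s(P_{t+1,T})\subseteq\cX^{s-t}\times\boldsymbol\Theta$.
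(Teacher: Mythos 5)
Your proof is correct, and although it shares the same backbone as the paper's argument---transporting properties of $\rho_t$ to $\widehat\rho_t$ through the disintegration formula \eqref{disintegration} of Theorem~\ref{t:disintegration}---the implementation is genuinely different. The paper's proof rests on one canonical device: every $U\in\cZ_{t,t}$ is realized as the $\theta$-wise risk profile of the special cost sequence $(U,0,\dots,0)$, since by the support, translation invariance, and normalization properties one has $\rho_t\big(U+a,0,\dots,0;\delta_\theta\circledast P_{t+1,T|\Theta}\big)=U(\theta)+a$ (equation \eqref{translation-of-theta}); consequently $\widehat\rho_t(U;\varLambda)=\rho_t(U,0,\dots,0;P_{t+1,T})$, every property of $\widehat\rho_t$ is read off from the same property of $\rho_t$ applied to such sequences, and the support bookkeeping collapses because only the first component is nonzero. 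You instead work with arbitrary representations $(Z_{t:T};P_{t+1,T})$, which reshuffles which hypothesis carries each step: your monotonicity follows from parameter consistency alone (the paper's uses monotonicity of $\rho_t$ together with \eqref{translation-of-theta}, hence also support, translation invariance, and normalization), and your translation invariance needs only translation invariance of $\rho_t$; in exchange, the paper's device buys the stronger conclusion that the properties of $\widehat\rho_t$ hold for \emph{every} $U\in\cZ_{t,t}$ (all such $U$ being representable), whereas yours hold a priori only on representable inputs, and the fiber analysis in your support step is the price of not normalizing the representation. One small wrinkle there: for $\theta\notin\supp(\mu)$, concluding that the truncated sequence has zero $\theta$-wise risk requires the support property of $\rho_t$ under $\delta_\theta\circledast P_{t+1,T|\Theta}$ (to replace functions that merely vanish on the $\theta$-fiber by the identically zero functions) \emph{followed by} normalization---normalization alone does not suffice; with that reading, your argument is complete.
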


\begin{proof}
Indeed, consider any measure $\varLambda \in \cP(\bTheta)$. Then for any $P_{t+1,T} \in \cP_{t+1,T}$ such that
$P_{t+1,T | \Theta}=\varLambda$, we will use the formula \eqref{disintegration} to analyze the implied properties of $\widehat \rho_t$.

\noindent
1) Suppose $\rho_t$ is normalized.  Then (the symbol $\mathbf{0}$ below denotes a function on $\boldsymbol \Theta$ that is identically equal to zero)
\[
\widehat \rho_t(\mathbf{0};\varLambda) =
\widehat \rho_t \Big(  \set{\rho_t\big(0,\dots,0;  \delta_\theta \circledast P_{t+1,T|\boldsymbol \Theta}\big ),\, \theta \in \boldsymbol \Theta}; P_{t+1,T,\Theta}\Big)
=\rho_t\big(0,\dots,0;   P_{t+1,T}\big) =0.
\]
Thus, $\widehat \rho$ is normalized.

\noindent
2) Suppose $\rho_t$ is normalized and translation invariant and has the support property. Then for any $V\in \bR$, we have
		\[
		\rho_t\big(V,0,\dots,0;  P_{t+1,T}\big) = V + \rho_t\big(0,0,\dots,0;  P_{t+1,T}\big)=V.
		\]
Therefore, for any $U \in  \cZ^{t,t}$ and any $a\in \bR$, by the support, translation invariant, and normalization properties of $\rho_t$, we  have for any $\theta\in\boldsymbol{\Theta}$,
\begin{equation}\label{translation-of-theta}
\rho_t\big(U+a,0,\dots,0;  \delta_\theta \circledast P_{t+1,T|\Theta}\big )
= \rho_t\big(U(\theta)+a,0,\dots,0;  \delta_\theta \circledast P_{t+1,T| \Theta}\big ) = U(\theta)+a.
\end{equation}
Therefore,
\begin{align*}
\widehat \rho_t(U+a;\varLambda)
&= \widehat \rho_t\Big(  \set{\rho_t\big(U+a,0,\dots,0;  \delta_\theta \circledast P_{t+1,T |\Theta}\big ),\, \theta \in \boldsymbol \Theta}; P_{t+1,T,\Theta}\Big)\\
&= \rho_t\big(U+a,0,\dots,0;  P_{t+1,T}\big) = a +  \rho_t\big(U,0,\dots,0;  P_{t+1,T}\big)\\
&=a + \widehat \rho_t\Big(  \set{ \rho_t\big(U,0,\dots,0;  \delta_\theta \circledast P_{t+1,T|\Theta}\big ),\, \theta \in \boldsymbol \Theta}; P_{t+1,T,\Theta}\Big)
= a+ \widehat \rho_t(U;\varLambda).
		\end{align*}
Hence, $\widehat \rho$ is translation invariant.

Similarly, for any $U \in  \cZ^{t,t}$, noting that $\supp_{t,t}(P_{t+1,T})=\supp(\Lambda)$, we deduce
		\begin{align*}
		\widehat \rho_t(U,\varLambda) &= \rho_t\big(U,0,\dots,0;  P_{t+1,T}\big)
		=\rho_t\big(\1_{\text{\rm supp}_{t,t}(P_{t+1,T})}U,0,\dots,0;  P_{t+1,T}\big)\\
		&=\rho_t\big(\1_{\text{\rm supp}(\varLambda)}U,0,\dots,0;  P_{t+1,T}\big)
		=\widehat \rho_t(\1_{\text{\rm supp}(\varLambda)}U,\varLambda).
	\end{align*}
Thus, $\widehat \rho_t$ also has the support property.

\noindent 3) Suppose $\rho_t$ is normalized, translation invariant, and monotonic. Then, for all $U,W \in  \cZ^{t,t}$ such that $U \le W$, employing \eqref{translation-of-theta} with $a=0$, we have
\begin{align*}
\widehat \rho_t(U;\varLambda)  &= \widehat \rho_t\Big(  \set{\rho_t\big(U,0,\dots,0;  \delta_\theta \circledast P_{t+1,T|\Theta}\big ),\, \theta \in \boldsymbol \Theta}; P_{t+1,T,\Theta}\Big)=\rho_t\big(U,0,\dots,0;   P_{t+1,T}\big)\\
&\leq \rho_t\big(W,0,\dots,0;   P_{t+1,T}\big) =
\widehat \rho_t\Big(  \set{\rho_t\big(W,0,\dots,0;  \delta_\theta \circledast P_{t+1,T |\Theta}\big ),\, \theta \in \boldsymbol \Theta}; P_{t+1,T,\Theta}\Big) \\
&= \widehat \rho_t(W;\varLambda),
\end{align*}
and this proves the monotonicity of $\widehat \rho_t$.

\noindent 4) If $\rho$ is monotonic, normalized, parameter consistent, translation invariant, and with support property, then $\widehat \rho_t$ is also normalized, monotonic, normalized,  translation invariant, and has the support property, and in view of Remark~\ref{rem:paramAtT}, $\widehat \rho_t$ is also parameter consistent.

\end{proof}

\subsection{Time Consistency}\label{s:time-consist}

\noindent We now consider the notion of time consistency of risk filters.

\begin{definition}\label{def:wr}
Let $t\in \{1,\ldots,T-1\}$. For any positive measure $\mu_{t+1}$ on $\cX^{T-t}\times \boldsymbol \Theta$ and for any $x\in \mathcal{X}$, we denote by $\mu_{t+1}(\,\cdot\, \| x )$ the measure on $\cX^{T-t-1}\times \boldsymbol \Theta$ given as
\begin{equation}
	\mu_{t+1}(A \times B \| x )=\frac{\mu_{t+1}(\{x\}\times A \times B)}{\mu_{t+1}(\{x\}\times \cX^{T-t-1}\times \boldsymbol \Theta)}.
\end{equation}
Clearly, $\mu_{t+1}(\,\cdot\, \| x )$  is a probability measure.

\end{definition}

In particular, taking $\mu_{t+1}=\delta_\theta \circledast P_{t+1,T|\Theta}$,
\begin{equation}\label{eq:wr}
\delta_\theta \circledast P_{t+1,T|\Theta}(A \times B\, \| x )=\frac{\delta_\theta \circledast P_{t+1,T|\Theta}(\{x\}\times A \times B)}{\delta_\theta \circledast P_{t+1,T|\Theta}(\{x\}\times \cX^{T-t-1}\times \boldsymbol \Theta)}.
\end{equation}
It follows from \eqref{eq:delta} and \eqref{eq:wr} that
\begin{align}\label{eq:wr-delta-0}
\delta_\theta \circledast P_{t+1,T|\Theta}(\,A\times B\, \|\, x )
&=\frac{P_{t+1,T|\theta}(\{x\}\times A)\delta_{ \theta}(B)}{P_{t+1,T|\theta}(\{x\}\times \cX^{T-t-1})}
=\frac{P_{t+1,T|\theta}(\{x\}\times A)}{ P_{t+1,t+1|\theta}(\{x\})}\delta_{ \theta}(B)\\
&=:\widetilde P_{t+1,T|\theta}(\,A\,  \|\, x )\delta_{ \theta}(B), \nonumber
\end{align}
which, in view of \eqref{eq:delta-1} gives
\begin{equation}\label{eq:wr-delta}
\delta_{ \theta} \circledast P^{\pi^{t,h_t}}_{t+1,T|\Theta}(A\times B\| x) =\frac{P^{\pi^{t,h_t}}_{\theta,t+1,T}(\{x\}\times A)}{ P^{\pi^{t,h_t}}_{\theta,t+1,t+1}(\{x\})}\delta_{ \theta}(B)=:\widetilde P^{\pi^{t,h_t}}_{\theta,t+1,T}(\,A\,  \| x )\delta_{ \theta}(B).
\end{equation}

\medskip
The next definition is a version of the dynamic conditional time consistency used in \cite{FanRuszczynski2018}, adapted to the set-up of the present paper.

\begin{definition}\label{sctc-pomdp}
A  dynamic risk filter $\rho=\big\{\rho_{t}\big\}_{t=1,\dots,T}$ is  \emph{time consistent} if for any $t=1,\ldots,T-1$, for any $P_{t+1,T}, Q_{t+1,T} \in \cP_{t+1,T}$,  such that $ P_{t+1,t+1|\Theta}= Q_{t+1,t+1|\Theta}$,  and for any functions\footnote{ The notation $\cdot_{k}$ is a place-holder for $k$ variables.}
$Z_{t,s}(\cdot_{s-t},\cdot_1),W_{t,s}(\cdot_{s-t},\cdot_1)\in {\cZ_{t,s}}$, $s=t+1,\ldots,T$, the inequalities
\begin{align}
\rho_{t+1}&\big(Z_{t,t+1}(x_{t+1},{\cdot_1}),\dots,Z_{t,T}(x_{t+1},\cdot_{T-t-1},{\cdot_1});\delta_\theta \circledast   P_{t+1,T|\Theta}(\,\cdot\, \| x_{t+1} ) \big) \nonumber \\
&\le  \rho_{t+1}\big(W_{t,t+1}(x_{t+1},{\cdot_1}),\dots,W_{t,T}(x_{t+1},\cdot_{T-t-1},{\cdot_1});\delta_\theta  \circledast  Q_{t+1,T|\Theta} (\,\cdot\, \| x_{t+1} ) \big),\label{simple-ineq}\\
& \qquad \qquad \qquad \forall\,\theta \in \boldsymbol \Theta,\quad \forall \,x_{t+1}\in \mathcal{X}, \nonumber
\end{align}
imply that for any function $f_t\in \cZ_{t,t}$
\begin{align}
\rho_{t}&\big(f_t({\cdot_1}),Z_{t,t+1}(\cdot_1,{\cdot_1}),\dots,Z_{t,T}(\cdot_{T-t},{\cdot_1})) ;\delta_\theta \circledast  P_{t+1,T|\Theta})\big) \nonumber\\
& \le \rho_{t}\big(f_t({\cdot_1}),W_{t,t+1}(\cdot_1,{\cdot_1}),\dots,W_{t,T}(\cdot_{T-t},{\cdot_1}));\delta_\theta \circledast  Q_{t+1,T|\Theta})\big),
\quad \forall\,\theta \in \boldsymbol \Theta. \label{consistent-ineq}
\end{align}
\end{definition}

\begin{lemma}\label{l:reduce-to-sigma}
Suppose a dynamic risk filter $\big\{\rho_{t}\big\}_{t=1,\dots,T}$ is normalized, trans\-la\-tion invariant, has the support property and is time consistent.
Let $\theta \in \bTheta$ be fixed. Then the function on $\cZ_{t,t+1} \times \cP(\cX^{T-t}\times \bTheta)$ given as
\begin{equation}\label{eq:rho0-1}
\rho_{t}\big(0,w,0,\dots,0;\delta_\theta \circledast P_{t+1,T| \Theta}\big)
\end{equation}
depends only on the probability $P_{t+1,t+1|\theta}$ and on the function $w(\cdot,\theta)$.
\end{lemma}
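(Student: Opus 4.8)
The plan is to remove the dependence on $w$ and on the future transitions one property at a time, using the support property, then translation invariance and normalization, and finally time consistency to collapse the tail of the horizon. Fix $\theta$ and abbreviate the fed measure as $\delta_\theta\circledast P_{t+1,T|\Theta}$. I would begin by recording, via \eqref{eq:delta}, that this measure equals $P_{t+1,T|\theta}(\cdot)\,\delta_\theta(\cdot)$, so it depends on the argument $P_{t+1,T}$ only through the single section $P_{t+1,T|\theta}$ and has $\boldsymbol\Theta$-marginal $\delta_\theta$; hence $\supp_{t+1}(\delta_\theta\circledast P_{t+1,T|\Theta})\subseteq\cX\times\{\theta\}$. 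Applying the support property of $\rho_t$ to the tuple $(0,w,0,\dots,0)$ then shows that the value depends on $w$ only through its restriction to $\cX\times\{\theta\}$, that is, through $w(\cdot,\theta)$. This disposes of the $w$-part and reduces the claim to showing that the value depends on $P_{t+1,T|\theta}$ only through its one-step marginal $P_{t+1,t+1|\theta}$.

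The engine I would isolate next is the pointwise evaluation
\begin{equation}
\rho_{t+1}\big(w(x,\cdot),0,\dots,0;\nu\big)=w(x,\theta'),
\end{equation}
valid for every $\theta'\in\boldsymbol\Theta$, every $x\in\cX$, and every $\nu\in\cP_{t+2,T}$ with $\boldsymbol\Theta$-marginal $\delta_{\theta'}$. This would be proved exactly as in Remark~\ref{rem:paramAtT}: since $\supp_{t+1}(\nu)=\{\theta'\}$ inside $\cX^{0}\times\boldsymbol\Theta=\boldsymbol\Theta$, the support property replaces $w(x,\cdot)$ by the constant $w(x,\theta')$, and then translation invariance followed by normalization gives $w(x,\theta')+\rho_{t+1}(0,\dots,0;\nu)=w(x,\theta')$. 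I would then feed in $\nu=\delta_{\theta'}\circledast P_{t+1,T|\Theta}(\cdot\,\|\,x)$, which by \eqref{eq:wr-delta-0} has $\boldsymbol\Theta$-marginal $\delta_{\theta'}$, so this value equals $w(x,\theta')$ no matter what $P_{t+1,T}$ is (the conditioning being understood at $x$ of positive one-step mass, the only case that enters).

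To finish I would take two measures $P_{t+1,T},Q_{t+1,T}\in\cP_{t+1,T}$ whose $\theta$-sections share the one-step marginal, $P_{t+1,t+1|\theta}=Q_{t+1,t+1|\theta}$. Because the fed measures only see the $\theta$-section, I may replace $P_{t+1,T}$ and $Q_{t+1,T}$, without altering any quantity above, by measures with a common full-support marginal on $\boldsymbol\Theta$, with $\theta$-conditionals $P_{t+1,T|\theta}$ and $Q_{t+1,T|\theta}$, and with identical conditionals at every $\theta'\neq\theta$; this forces the kernel equality $P_{t+1,t+1|\Theta}=Q_{t+1,t+1|\Theta}$ demanded by Definition~\ref{sctc-pomdp}. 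Choosing $f_t=0$ and $Z_{t,s}=W_{t,s}$ equal to $w$ when $s=t+1$ and to $0$ when $s\ge t+2$, the previous paragraph makes both sides of the time-consistency hypothesis \eqref{simple-ineq} equal to $w(x,\theta')$ for all $\theta',x$, so that hypothesis holds with equality. Time consistency then yields \eqref{consistent-ineq}, which here reads $\rho_t(0,w,0,\dots,0;\delta_\theta\circledast P_{t+1,T|\Theta})\le\rho_t(0,w,0,\dots,0;\delta_\theta\circledast Q_{t+1,T|\Theta})$; swapping the roles of $P$ and $Q$ gives the opposite inequality and hence equality, which is precisely the stated independence.

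I expect the third step to be the crux: recognizing that time consistency is the right instrument (the vanishing of the tail costs $Z_{t,t+2},\dots,Z_{t,T}$ being exactly what forces the inner $\rho_{t+1}$-evaluations to reduce, through the second step, to the $\theta'$-pointwise values of $w$), and then carrying out the bookkeeping that embeds the two given $\theta$-sections into full measures satisfying the kernel-equality hypothesis of Definition~\ref{sctc-pomdp} while leaving the fed measures untouched. By contrast, the support-property reduction of the first step and the pointwise evaluation of the second are routine variants of Remark~\ref{rem:paramAtT}.
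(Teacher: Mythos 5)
Your proof is correct, and its core mechanism is the same as the paper's: the support property of $\rho_t$ reduces $w$ to $w(\cdot,\theta)$; support, translation invariance and normalization together evaluate the inner conditional risk filter as $\rho_{t+1}\big(w(x,\cdot),0,\dots,0;\nu\big)=w(x,\theta')$ for any $\nu$ with $\bTheta$-marginal $\delta_{\theta'}$, independently of the tail of the measure; and time consistency (applied with $Z=W$ so that the hypothesis \eqref{simple-ineq} holds with equality, then with the roles of the two measures swapped) transfers this independence to $\rho_t$. Where you go beyond the paper is your third step. The paper applies Definition~\ref{sctc-pomdp} directly to $P_{t+1,T}$ and $Q_{t+1,T}$ under the hypothesis $P_{t+1,t+1|\Theta}=Q_{t+1,t+1|\Theta}$, i.e., equality of the kernels at \emph{every} $\theta'$, and so, read literally, it concludes only that the value \eqref{eq:rho0-1} depends on the whole kernel $P_{t+1,t+1|\Theta}$; the lemma, however, asserts dependence on the single section $P_{t+1,t+1|\theta}$, which is exactly what is needed for $\sigma_t$ in \eqref{sigma-def-tom-original-0} to be a well-defined map on $\cZ_1^{\cX}\times\cP(\cX)$. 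Your embedding device --- replacing the two given measures by measures with a common marginal on $\bTheta$, the prescribed conditionals at the fixed $\theta$, and identical conditionals at every $\theta'\neq\theta$, which leaves the fed measures $\delta_\theta\circledast P_{t+1,T|\Theta}$ and $\delta_\theta\circledast Q_{t+1,T|\Theta}$ untouched while forcing the kernel equality that the definition of time consistency demands --- closes precisely this gap. So your argument is not a mere rephrasing: it proves the statement as written, whereas the paper's proof establishes the kernel-dependence version and leaves the passage from kernel to section implicit.
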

\begin{proof}
For any $P_{t+1,T}$ and $x\in \cX$, the support property of $\rho_{t+1}$ implies that
\[
\rho_{t+1}\big(w(x,\cdot),0,\dots,0;\delta_\theta \circledast P_{t+1,T|\Theta}(\cdot \| x)\big)=
\rho_{t+1}\big(w(x,\theta),0,\dots,0;\delta_\theta \circledast P_{t+1,T|\Theta}(\cdot\| x)\big).
\]
Then, by the translation invariance and the normalization properties of $\rho_{t+1}$ we obtain
\[
\rho_{t+1}\big(w(x,\cdot),0,\dots,0;\delta_\theta \circledast P_{t+1,T|\Theta}(\cdot \| x)\big) = w(x,\theta),
\]
which does not depend on $P_{t+2,T|\Theta}$. Hence, for any $Q_{t+1,T}\in \cP_{t+1,T}$
we have
\[
\rho_{t+1}\big(w(x,\cdot),0,\dots,0;\delta_\theta \circledast P_{t+1,T|\Theta}(\,\cdot\, \| x)\big)
= \rho_{t+1}\big(w(x,\cdot),0,\dots,0;\delta_\theta \circledast Q_{t+1,T|\Theta}(\,\cdot\, \| x)\big).
\]
If in addition, $P_{t+1,t+1|\Theta}=Q_{t+1,t+1|\Theta}$, then, by the time consistency,
\[
\rho_t\big(0, w,0,\dots,0;\delta_\theta \circledast P_{t+1,T|\Theta}\big)=
 \rho_t\big(0, w,0,\dots,0;\delta_\theta \circledast Q_{t+1,T|\Theta}\big).
\]
which proves that only the conditional measure $P_{t+1,t+1|\Theta}$  matters in this calculation.
The fact that the knowledge of $w(\cdot,\theta)$ is sufficient, follows from the support property.
This concludes the proof.
\end{proof}

In accordance with the above lemma we define the functions
\[
	\sigma_t: Z^\cX_1 \times \cP(\cX) \to {\bR}, \quad t=1,\ldots, T-1,
\]
as
\begin{equation}\label{sigma-def-tom-original-0}
\sigma_t(v;P_{t+1,t+1|\theta})=\rho_{t}\big(0,w,0,\dots,0;\delta_\theta \circledast P_{t+1,T|\Theta}\big),
\end{equation}
where $w(\cdot,\theta)\equiv v(\cdot)$ and can be arbitrary otherwise. We refer to these functions as \emph{transition risk mappings}.

Note that if $\rho_{t}$ is normalized, monotonic, translation invariant and has support property, then so is $\sigma_t$.

\begin{theorem}	\label{t:filter-structure}
A dynamic risk filter $\rho=\big\{\rho_{t}\big\}_{t=1,\dots,T}$ is normalized, monotonic, translation invariant, has the support property, is parameter consistent, and  time consistent, if and only if the following conditions are satisfied:\\[.3em]
1) Marginal risk mappings $\widehat \rho_t: \cZ_{t,t} \times \cP(\boldsymbol \Theta) \to \bR$, $t\in\cT$, exist, which are normalized, monotonic, translation invariant,	and have the support property;\\[0.3em]
2) Transition risk mappings	given in \eqref{sigma-def-tom-original-0} are such that

\begin{enumerate}[(i)]
\item
For all $t=1, \ldots, T-1$, $\sigma_t(\cdot;\cdot)$ is  normalized, monotonic, translation invariant, and has the support property;

\item
For any $P_{t+1,T} \in \mathcal{P}_{t+1,T}$, $t=1, \dots ,T-1$, and for any functions $Z_{t,s}\in \cZ_{t,s}$, $s\in\cT_t$, we have that\footnote{Recall Definition \ref{def:wr}.
The notation $\sigma_t\big(\rho_{t+1}\big(Z_{t,t+1}(\diamond,{\theta}), \dots ,Z_{t,T}(\diamond,\cdot_{T-t-1},{\theta});\delta_{{\theta}}  \circledast P_{t+1,T| \Theta}(\,\cdot\, \| \diamond )\big);P_{t+1,t+1|\theta}\big)$, where we use $\diamond$ as place holder, means that $P_{t+1,t+1|\theta}$ acts on $w(x, \theta)= \rho_{t+1}\big(Z_{t,t+1}(x,{\theta}), \dots ,f_{t,T}(x,\cdot_{T-t-1},{\theta});\break\delta_{{\theta}}  \circledast P_{t+1,T|\Theta}(\,\cdot\, \| x )\big)$ as a function of $x$. }
\begin{multline}\label{li-risk-trans-map-controlled}
\rho_{t}(Z_{t,t},Z_{t,t+1},\dots,Z_{t,T}; P_{t+1,T}) =
\widehat \rho_t\Big(\Big\{Z_{t,t}(\theta)+ \sigma_t\big(
\rho_{t+1}\big(Z_{t,t+1}(\diamond,\cdot_1), \dots ,Z_{t,T}(\diamond,\cdot_{T-t-1},\cdot_1);\\ \delta_{{\theta}}  \circledast P_{t+1,T|\Theta}(\,\cdot\, \| \diamond )\big);P_{t+1,t+1|\theta}\big),\theta \in \bTheta\Big\};
P_{t+1,T,\Theta} \Big).
\end{multline}
For any function $Z_{T,T}\in \cZ_{T,T}$ and $P_{T+1,T}\in \cP(\boldsymbol{\Theta})$
\begin{equation}\label{final-conditional}
\rho_{T}(Z_{T,T}; P_{T+1,T}) = \widehat \rho_T\Big(Z_{T,T},P_{T+1,T}\Big).
\end{equation}
\end{enumerate}
\end{theorem}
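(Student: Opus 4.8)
The plan is to prove the two implications separately; the ``only if'' direction carries the substance, while the ``if'' direction is a backward-induction verification. \emph{Forward direction.} Assume $\rho=\{\rho_t\}$ has all six properties. Condition~1) is immediate: parameter consistency lets me invoke Theorem~\ref{t:disintegration} to produce marginal risk mappings $\widehat\rho_t$ satisfying the disintegration \eqref{disintegration}, and Proposition~\ref{prop:disintegration} shows they are normalized, monotonic, translation invariant and have the support property. The transition risk mappings $\sigma_t$ of \eqref{sigma-def-tom-original-0} are well defined by Lemma~\ref{l:reduce-to-sigma}, and the note following that definition gives condition~2)(i). It remains to establish \eqref{li-risk-trans-map-controlled} and \eqref{final-conditional}.

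Since \eqref{disintegration} already expresses $\rho_t(\,\cdot\,;P_{t+1,T})$ through the family $\{\rho_t(\,\cdot\,;\delta_\theta\circledast P_{t+1,T|\Theta})\}_{\theta}$ and the marginal $P_{t+1,T,\Theta}$, it suffices to prove the per-$\theta$ identity
\[
\rho_t(Z_{t,t},\dots,Z_{t,T};\delta_\theta\circledast P_{t+1,T|\Theta})=Z_{t,t}(\theta)+\sigma_t(v_\theta;P_{t+1,t+1|\theta}),
\]
with $v_\theta(x)=\rho_{t+1}(Z_{t,t+1}(x,\cdot),\dots,Z_{t,T}(x,\cdot_{T-t-1},\cdot);\delta_\theta\circledast P_{t+1,T|\Theta}(\,\cdot\,\|x))$; substituting into \eqref{disintegration} then gives \eqref{li-risk-trans-map-controlled}. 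First I would peel off the running cost: as $\delta_\theta\circledast P_{t+1,T|\Theta}$ has $\boldsymbol\Theta$-marginal $\delta_\theta$, the support property replaces $Z_{t,t}$ by the constant $Z_{t,t}(\theta)$ and translation invariance extracts it (exactly the manipulation of Remark~\ref{rem:paramAtT}), reducing the claim to
\[
\rho_t(0,Z_{t,t+1},\dots,Z_{t,T};\delta_\theta\circledast P_{t+1,T|\Theta})=\rho_t(0,w,0,\dots,0;\delta_\theta\circledast P_{t+1,T|\Theta}),
\]
where $w(x,\theta')=\rho_{t+1}(Z_{t,t+1}(x,\cdot),\dots,Z_{t,T}(x,\cdot_{T-t-1},\cdot);\delta_{\theta'}\circledast P_{t+1,T|\Theta}(\,\cdot\,\|x))$; the right-hand side equals $\sigma_t(v_\theta;P_{t+1,t+1|\theta})$ by \eqref{sigma-def-tom-original-0}, since $w(\cdot,\theta)=v_\theta$.

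This last equality is the crux and the main obstacle, and here time consistency enters. The decisive point is to define $w$ by the displayed formula for \emph{every} $\theta'\in\boldsymbol\Theta$, not merely at the fixed $\theta$. Then the support, translation invariance and normalization reduction used in Lemma~\ref{l:reduce-to-sigma} shows that the inner $\rho_{t+1}$-risk of the sequence $(w,0,\dots,0)$ equals $w(x_{t+1},\theta')$, which matches the inner $\rho_{t+1}$-risk of $(Z_{t,t+1},\dots,Z_{t,T})$ for \emph{all} $\theta'$ and $x_{t+1}$. Consequently hypothesis \eqref{simple-ineq} of Definition~\ref{sctc-pomdp} holds with equality (take $Q_{t+1,T}=P_{t+1,T}$ and $f_t=0$), and invoking time consistency in both directions returns \eqref{consistent-ineq} as an equality at the fixed $\theta$, which is what is wanted. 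The terminal case \eqref{final-conditional} is easier: \eqref{disintegration} holds at $t=T$ by Remark~\ref{rem:paramAtT}, while $\rho_T(Z_{T,T};\delta_\theta)=Z_{T,T}(\theta)$ by support, translation invariance and normalization, so \eqref{disintegration} collapses to \eqref{final-conditional}.

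\emph{Backward direction.} Now I would take \eqref{li-risk-trans-map-controlled} and \eqref{final-conditional} as the definition of $\rho_t$ and verify the six properties by backward induction on $t$, with \eqref{final-conditional} as the base case. Normalization, monotonicity, translation invariance and the support property propagate through the composition $\widehat\rho_t(\{Z_{t,t}(\theta)+\sigma_t(\cdot)\};\cdot)$ one layer at a time: for instance $\rho_{t+1}(0,\dots,0;\cdot)=0$ forces $\sigma_t(0;\cdot)=0$ and then $\widehat\rho_t(\mathbf{0};\cdot)=0$, and the additive extraction of a constant passes through $\rho_{t+1}$, $\sigma_t$ and $\widehat\rho_t$ in turn. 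The two genuinely new checks are the consistency notions. For parameter consistency, evaluating the definition at $\delta_\theta\circledast P_{t+1,T|\Theta}$ recovers precisely the $\theta$-component of the inner family (because $\widehat\rho_t$ has the support property and $\delta_\theta$ has singleton support), so \eqref{eq:paramCon1} together with $P_{t+1,T,\Theta}=Q_{t+1,T,\Theta}$ and monotonicity of $\widehat\rho_t$ yields \eqref{eq:paramCon2}. For time consistency, the assumption $P_{t+1,t+1|\Theta}=Q_{t+1,t+1|\Theta}$ gives $P_{t+1,t+1|\theta}=Q_{t+1,t+1|\theta}$, so the inner inequalities \eqref{simple-ineq} feed through monotonicity of $\sigma_t$ (whose second argument now agrees) and the additive term $f_t(\theta)$ to deliver \eqref{consistent-ineq}.
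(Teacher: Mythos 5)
Your proposal is correct and takes essentially the same route as the paper's own proof: Theorem~\ref{t:disintegration} and Proposition~\ref{prop:disintegration} for the marginal mappings, the auxiliary function $w$ together with time consistency applied as an equality (after the Lemma~\ref{l:reduce-to-sigma}-type support/translation/normalization reduction) to collapse the tail into $\sigma_t$, and backward induction with layer-by-layer propagation of the properties for the converse, including the same treatment of parameter and time consistency via the support and monotonicity of $\widehat\rho_t$ and $\sigma_t$. The only differences are cosmetic: you extract $Z_{t,t}(\theta)$ before invoking time consistency with $f_t=0$ (the paper invokes time consistency first, with general $f_t=Z_{t,t}$, and peels afterwards), and you are briefer than the paper on the support-property verification in the induction step, which the paper carries out in detail using Remark~\ref{rem:locality}.
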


\begin{proof} We fix $P_{t+1,T} \in \mathcal{P}_{t+1,T}$ and $Z_{t,s}\in \cZ_{t,s}$, $s=t,\ldots,T$. Since $t$ is fixed, to alleviate the notations we simply write $Z_s$, instead of $Z_{t,s}$ in this proof.

Since the risk filter is parameter consistent, Theorem \ref{t:disintegration} yields the existence of mappings $\widehat \rho_t$ such that
\[
\rho_{t}(Z_{t},Z_{t+1},\dots,Z_{T}; P_{t+1,T}) =
\widehat \rho_t\Big(\Big\{\rho_{t}\big(Z_{t},Z_{t+1},\dots,Z_{T};\delta_\theta \circledast P_{t+1,T| \Theta}\big),\theta \in \bTheta\Big\};
P_{t+1,T,\Theta} \Big).
\]

It follows from Proposition~\ref{prop:disintegration} that $\widehat \rho$ is normalized, monotonic, trans\-la\-tion invariant, has the support property.

Next, we derive an equivalent expression for the first argument of $\widehat \rho_t$ that will prove \eqref{li-risk-trans-map-controlled}.
Define the function
\[
w(x,\theta) = \rho_{t+1}\big(Z_{t+1}(x,{\cdot_1}),\dots,Z_{T}(x,\cdot_{T-t-1},{\cdot_1});\delta_\theta \circledast P_{t+1,T|\boldsymbol \Theta}(\,\cdot\, \| x)\big),
\quad x\in \cX,\quad \theta\in \boldsymbol \Theta.
\]
Then, for any fixed $x\in \cX$ and $\theta\in\boldsymbol{\Theta}$, we use the support, translation invariance, and the normalization properties in the chain of equations below: 		
\begin{align*}
	\rho_{t+1}&\big(w(x,\cdot),0,\dots,0;\delta_\theta \circledast P_{t+1,T|\Theta}(\,\cdot\, \| x)\big) \\
&=\rho_{t+1}\big(w(x,\theta),0,\dots,0;\delta_\theta \circledast P_{t+1,T|\Theta}(\,\cdot\, \| x)\big)\\
&=w(x,\theta) + \rho_{t+1}\big(0,0,\dots,0;\delta_\theta \circledast P_{t+1,T|\Theta}(\,\cdot\, \| x)\big)
 = w(x,\theta) \\
&= \rho_{t+1}\big(Z_{t+1}(x,{\cdot_1}),\dots,Z_{T}(x,\cdot_{T-t-1},{\cdot_1});\delta_\theta \circledast P_{t+1,T|\Theta}(\,\cdot\, \| x)\big).
\end{align*}
In view of the assumed time consistency of $\rho$, the above implies that  for every $Z_{t,t}\in \cZ_{t,t}$,
\[
\rho_{t}\big(Z_{t},Z_{t+1},\dots,Z_{T};\delta_\theta \circledast P_{t+1,T|\Theta}\big)
= \rho_{t}\big(Z_{t},w,0,\dots,0;\delta_\theta \circledast P_{t+1,T|\Theta}\big)=:I_1.
\]
Thus, by using the translation invariance and the support properties again, we conclude that,
 for all $\theta\in {\boldsymbol \Theta}$,
\begin{equation*}
I_1= \rho_{t}\big(Z_{t}(\theta),w,0,\dots,0;\delta_\theta \circledast P_{t+1,T|\Theta}\big)
=  Z_{t}(\theta) + \rho_{t}\big(0,w,0,\dots,0;\delta_\theta \circledast P_{t+1,T|\Theta}\big).
\end{equation*}
Using \eqref{sigma-def-tom-original-0}, we get
\[
I_1 = Z_{t}(\theta) + \sigma_t\big(w(\cdot,\theta),P_{ t+1,t+1|\theta}\big).
\]
Finally, from here, noting that by support property
\begin{equation}\label{eq:wxtheta}
w(x,\theta) = \rho_{t+1}\big(Z_{t+1}(x,\theta),\dots,Z_{T}(x,\cdot_{T-t-1},\theta);\delta_\theta \circledast P_{t+1,T|\Theta}(\,\cdot\, \| x)\big),
\quad x\in \cX,\quad \theta\in \boldsymbol \Theta,
\end{equation}
we obtain the representation \eqref{li-risk-trans-map-controlled}. The representation \eqref{final-conditional} follows from the definition of $\rho_{T}$ and the form of $\widehat \rho_T$.

Next we prove the converse statement by backward induction in time. For $t=T$, the conditional risk filter \eqref{final-conditional} has all the postulated properties, with the exception of the time consistency, because $\widehat \rho_T$ does (see Proposition~\ref{prop:disintegration}).

Suppose the conditional risk filters $\rho_{s}$, $s=t+1,\dots,T$ are normalized, monotonic, translation invariant, have the support property, are parameter consistent, and  time consistent. We will verify these properties for $\rho_t$ given by formula \eqref{li-risk-trans-map-controlled}.
The translation invariance follows from the translation invariance of  $\widehat \rho_t$. The normalization and the monotonicity follow immediately from the normalization and the monotonicity  of $\sigma_t$, $\rho_{t+1}$ and $\widehat \rho_t$.

We now verify the support property. For every $\theta$, and $x$ define $\mu_{\theta,x}(\cdot)=\delta_{\theta}  \circledast P_{t+1,T|\theta}(\cdot \| x )$, $A(\theta) = \supp(P_{t+1,t+1|\theta})\subset\cX$, $B=\supp(P_{t+1,T, \Theta})\subset \boldsymbol{\Theta}$. Then, by \eqref{li-risk-trans-map-controlled}, \eqref{eq:wxtheta}, the support property of $\hat\rho_t$ and $\sigma_t$, and by Remark~\ref{rem:locality} applied to $\sigma_t$, we deduce that
\begin{align}
\rho_{t}(Z_{t}, & Z_{t+1}, \ldots, Z_{T}; P_{t+1,T})  =
	\hat\rho_t\Big(	\Big\{ Z_{t}(\theta) + \sigma_t(w(\diamond,\theta);P_{t+1,t+1,|\theta});
	\theta\in\boldsymbol{\Theta}\Big\}; P_{t+1,T,\Theta}\Big) \nonumber\\
	& = \hat\rho_t\Big(\Big\{ \1_B(\theta)Z_{t}(\theta) + \1_B(\theta)\sigma_t(w(\diamond,\theta);P_{t+1,t+1,|\theta}); \theta\in\boldsymbol{\Theta}\Big\}; P_{t+1,T,\Theta}\Big)\nonumber  \\
	& = \hat\rho_t\Big(\Big\{ \1_B(\theta)Z_{t}(\theta) + \sigma_t(\1_B(\theta)\1_{A(\theta)}(\diamond) w(\diamond,\theta);P_{t+1,t+1,|\theta}); \theta\in\boldsymbol{\Theta}\Big\}; P_{t+1,T,\Theta}\Big).  \label{eq:temp2}
\end{align}
By the assumed support property of $\rho_{t+1}$, and in view of Remark~\ref{rem:locality} applied to $\rho_{t+1}$, we obtain, using \eqref{eq:wxtheta} again,
\begin{multline*}
\1_{A(\theta)}(x)\1_B(\theta) w(x,\theta)
= \rho_{t+1}\big(\1_{A(\theta)}(x)\1_B(\theta)Z_{t+1}(x,\theta),\1_{A(\theta)}(x)\1_B(\theta)\1_{\supp_{t+2}(\mu_{\theta,x})} Z_{t+2}(x,\cdot,\theta),\\
\dots,\1_{A(\theta)}(x)\1_B(\theta)\1_{\supp_T(\mu_{\theta,x})}Z_{T}(x,\cdot_{T-t-1},\theta);\mu_{\theta,x}\big),
\end{multline*}
for every $x\in\cX$ and $\theta\in\boldsymbol{\Theta}$. From here and \eqref{eq:temp2}, combined with the normalization property of $\rho_{t+1}$, and the fact that $\1_{A(\theta)}(x)\1_B(\theta)\1_{\supp_{s}(\mu_{\theta,x})} \leq 1_{\supp_s(P_{t+1,T})}, \ s=t,\ldots,T$, we obtain the support property of~$\rho_{t}$.

Next we prove the parameter consistency. Assume that \eqref{eq:paramCon1} is satisfied for a fixed $\bar \theta\in\boldsymbol{\Theta}$, and denote by $\bar P_{t+1,T}= \delta_{\bar \theta}\circledast P_{t+1,T|\Theta}$ and $\bar Q_{t+1,T}= \delta_{\bar \theta}\circledast Q_{t+1,T|\Theta}$. We note that\footnote{We use the convention that $\frac{0}{0}=0$ when considering  $
\bar P_{t+1,t+1|\theta}$ and $\bar P^x_{t+1,T|\theta} (\cdot \| x)$.}
$$
\bar P_{t+1,t+1|\theta} = P_{t+1,t+1|\bar \theta} \cdot \1_{\bar \theta}(\theta), \quad \bar P_{t+1,T,\Theta} = \delta_{\bar \theta}, \quad
\bar P_{t+1,T|\theta} (\cdot \| x) = P_{t+1,T|\bar \theta}(\cdot \| x)\1_{\bar \theta}(\theta).
$$

Using this, and in view of \eqref{li-risk-trans-map-controlled}, we can write \eqref{eq:paramCon1} as follows (with measures $\bar P_{t+1,T}$ and $\bar Q_{t+1,T}$ in place of $P_{t+1,T}$ and $Q_{t+1,T}$):
\begin{multline*}
\widehat \rho_t\Big(\Big\{Z_{t}(\theta)+ \sigma_t\big(
\rho_{t+1}\big(Z_{t+1}(\diamond,{\theta}), \dots ,Z_{T}(\diamond,\cdot_{T-t-1},{\theta});\\
\delta_{{\bar \theta}}(\cdot) P_{t+1,T|\bar \theta}(\,\cdot\, \| \diamond )\1_{\bar \theta}(\theta);P_{t+1,t+1|\bar \theta}\1_{\bar \theta}(\theta)\big),\theta \in \bTheta\Big\};
\delta_{{\bar \theta}}\Big)\\
\leq
 \widehat \rho_t\Big(\Big\{W_{t}(\theta)+ \sigma_t\big(
\rho_{t+1}\big(W_{t+1}(\diamond,{\theta}), \dots ,W_{T}(\diamond,\cdot_{T-t-1},{\theta});\\
\delta_{{\bar \theta}}(\cdot) Q_{t+1,T|\bar \theta}(\,\cdot\, \| \diamond )\1_{\bar \theta}(\theta);Q_{t+1,t+1|\bar \theta}\1_{\bar \theta}(\theta)\big),\theta \in \bTheta\Big\};
\delta_{{\bar \theta}}\Big).
\end{multline*}
By the support property of $\hat\rho_t$ and $\sigma_t$, and by the normalization and monotonicity of $\hat\rho_t$, we obtain that
\begin{align*}
Z_{t}(\bar \theta)&+ \sigma_t\big(
\rho_{t+1}\big(Z_{t+1}(\diamond,{\bar \theta}), \dots ,Z_{T}(\diamond,\cdot_{T-t-1},{\bar \theta});\delta_{{\bar \theta}}(\cdot) P_{t+1,T|\bar \theta}(\,\cdot\, \| \diamond );P_{t+1,t+1|\bar \theta}\big )\\
\leq& W_{t}(\bar \theta)+ \sigma_t\big(
\rho_{t+1}\big(W_{t+1}(\diamond,{\bar \theta}), \dots ,W_{T}(\diamond,\cdot_{T-t-1},{\bar \theta});\delta_{{\bar \theta}}(\cdot) Q_{t+1,T|\bar \theta}(\,\cdot\, \| \diamond );Q_{t+1,t+1|\bar \theta}\big ),
\end{align*}
for any $\bar \theta\in\boldsymbol{\Theta}$. From here, applying $\hat\rho_t$ to both sides, since we assumed that $P_{t+1,T,\boldsymbol \Theta}=Q_{t+1,T,\boldsymbol \Theta}$, employing monotonicity of $\hat\rho_t$, we obtain \eqref{eq:paramCon2}, and the parameter consistency of $\rho_{t}$ is proved.

Finally, let us verify the time consistency at time $t$. If the inequalities  \eqref{simple-ineq} are satisfied, then it follows from
 the monotonicity of $\sigma_t$ with respect to its first argument that for all $\theta  \in \bTheta$
\begin{multline*}
G_1(\theta):= \sigma_t\big(
\rho_{t+1}\big(Z_{t+1}(\diamond,\theta),\dots,Z_{T}(\diamond,\cdot_{T-t-1},\theta);\delta_{\theta}  \circledast P_{t+1,T|\Theta}(\cdot \| \diamond)\big);P_{t+1, t+1|\theta} \big)\\
\le \sigma_t\big(
\rho_{t+1}\big(W_{t+1}(\diamond,\theta),\dots,W_{T}(\diamond,\cdot_{T-t-1},\theta);\delta_{\theta}  \circledast Q_{t+1,T|\Theta}(\cdot \| \diamond)\big);P_{t+1, t+1|\theta}\big)
=:G_2(\theta).
\end{multline*}
Then, from the monotonicity of $\widehat \rho_t$  we get, for any function $f_t\in \mathcal{Z}_{t,t}$,
$$
\hat\rho_t(f_t+G_1; \delta_\theta) \leq \hat\rho_t(f_t+G_2;\delta_\theta), \quad \theta\in\boldsymbol{\Theta}.
$$
From here, using the support property of $\widehat \rho_t, \sigma_t$ and $\rho_{t+1}$, along \eqref{li-risk-trans-map-controlled}, we obtain \eqref{consistent-ineq}, and thus time consistency at $t$ is verified.

By induction, all properties hold true for $t=1,\dots,T$, and the proof is complete.
\end{proof}

\begin{example}\label{ex:additive-reward}

We consider a very special conditional risk filter, given as the expectation of an additive functional under the measure $P_{t+1,T}$. Specifically, we let
\begin{align*}
\rho_{t}(Z_{t,t},\ldots,Z_{t,T};P_{t+1,T}) & =
\int_{\mathcal{X}^{T-t}\times \boldsymbol \Theta}
\sum_{k=t}^T Z_{t,k}(x_{t+1},\ldots,x_{k}, \theta)  P_{t+1,T}(dx_{t+1},\cdots,dx_T,d\theta) \\
= & E_{P_{t+1,T}}\sum_{k=t}^T Z_{t,k}.
\end{align*}

Clearly, this $\rho_{t}$ is normalized, monotonic, translation invariant, and has the support property (cf.  Definition \ref{basic-prop-pomdp}).

Next, note that for  this $\rho_{t}$ the inequality \eqref{eq:paramCon1} becomes (cf. \eqref{eq:delta})
\begin{align*}
\int_{\mathcal{X}^{T-t}} \sum_{k=t}^T Z_{t,k}(x_{t+1:k},\theta)  P_{t+1,T|\theta}(dx_{t+1},\cdots,dx_T) \leq
\int_{\mathcal{X}^{T-t}} \sum_{k=t}^T Z_{t,k}(x_{t+1:k}, \theta)  Q_{t+1,T|\theta}(dx_{t+1},\cdots,dx_T),
\end{align*}
for any $\theta\in\boldsymbol{\Theta}$. Assuming that $P_{t+1,T,\Theta}=Q_{t+1,T, \Theta}$, multiplying the last inequality by $P_{t+1,T,\Theta}(\theta)$, and summing up with respect to  $\theta\in\boldsymbol{\Theta}$, the inequality \eqref{eq:paramCon2} follows, and hence the parameter consistency is true.

The time consistency follows by similar arguments. Indeed, \eqref{simple-ineq} becomes (cf. \eqref{eq:wr-delta-0})
\begin{align*}
\int_{\mathcal{X}^{T-t-1}} &\sum_{k=t+1}^T Z_{t,k}(x_{t+1},x_{t+2:k},\theta)  \;\widetilde P_{t+1,T|\theta}(dx_{t+2},\cdots,dx_T \| x_{t+1}) \\
& \leq  \int_{\mathcal{X}^{T-t-1}} \sum_{k=t+1}^T Z_{t,k}(x_{t+1},x_{t+2:k}, \theta)\; \widetilde Q_{t+1,T|\theta}(dx_{t+2},\cdots,dx_T \| x_{t+1}),
\end{align*}
for any $x_{t+1}\in\cX$ and $\theta\in\boldsymbol{\Theta}$.
Assuming that $P_{t+1,t+1|\Theta} = Q_{t+1,t+1|\Theta}$, multiplying both parts by $P_{t+1,t+1|\theta}(x_{t+1})$,  and noting that (cf. \eqref{eq:wr-delta-0})
$$
\widetilde P_{t+1,T | \theta}(\cdot \| x_{t+1}) P_{t+1,t+1|\theta}(x_{t+1}) = P_{t+1,T|\theta}(x_{t+1},\cdot),
$$
for any function $f_t\in \cZ_{t,t}$ we have (cf. \eqref{eq:wr-delta-0})
\begin{align*}
f_t(\theta) + \int_{\mathcal{X}^{T-t-1}} &\sum_{k=t+1}^T Z_{t,k}(x_{t+1},x_{t+2:k},\theta) \;P_{t+1,T|\theta,}(\{x_{t+1}\},dx_{t+2},\cdots,dx_T ) \\
& \leq  f_t(\theta)+\int_{\mathcal{X}^{T-t-1}} \sum_{k=t+1}^T Z_{t,k}(x_{t+1},x_{t+2:k}, \theta)  \;Q_{t+1,T|\theta}(\{x_{t+1}\},dx_{t+2},\cdots,dx_T ),
\end{align*}
After summing up with respect to $x_{t+1}$ we obtain \eqref{consistent-ineq}, and thus the time consistency is proved.

\smallskip
We complete this example by observing that in the this case we have that $\widehat\rho_t(f, {P}') = E_{{P}'}(f)$, for $f\in Z_{t,t}$ and for  ${P}'\in\cP(\boldsymbol{\Theta})$, and that and $\sigma_{t}(v; {P}'') = E_{{P}''}(v)$, for $v \in \mathcal{Z}^{\cX}_1$ and  ${P}''\in\cP(\cX)$.
\end{example}
	
\begin{example}\label{rem:notations}
Let us cast  Example \ref{ex:additive-reward} in the setup of Section~\ref{s:pomdp}. For this, we fix a history $h_t=(x_1,\ldots,x_t)\in \cH_t$ and $\pi\in \Pi$, and we take
\[
	 Z_{t,t}(\theta):=Z^{\pi,h_t}_{\theta,t,t} =c_t(x_t,\pi_t(h_t),\theta),\]  \[Z_{t,s}(x_{t+1},\ldots,x_s,\theta):=Z^{\pi,h_t,x_{t+1},\ldots,x_s}_{\theta,t,s} =c_s(x_s,\pi^{t,h_t}_s(x_{t+1},\ldots,x_s),\theta), s=t+1,\ldots,T, \]
and
\[
P_{t+1,T} = P_{t+1,T}^{\pi^{t,h_t}}.
\]
The conditional risk filter of Example \ref{ex:additive-reward} becomes a conditional expectation (cf. Lemma \ref{lemma:2.2})
\begin{align}\label{mult}
&\rho_{t}\Big(c_t(x_t,\pi_t(h_t),\cdot),c_{t+1}(\cdot,\pi_{t+1}(h_t,\cdot),\cdot),\cdots ,c_{T}(\cdot,\pi_{T}(h_t,\cdot,\ldots,\cdot),\cdot),P^{\pi^{t,h_t}}_{t+1,T}\Big )\nonumber  \\
&= E^\pi\left[c_t(x_t,\pi_t(h_t),\Theta) + \sum_{s=t+1}^T c_s(\wh X_s, \pi_s^{t,h_t}(\wh X_{t+1},\ldots,\wh X_s), \Theta) \mid \wh H_t=h_t\right],
\end{align}
for $t=1,\ldots,T$, where we use the standard convention that an empty sum is zero (i.e. $\sum_{s={T+1}}^T\cdots =0$ in our case).

In view of \eqref{eq:I-T-integrated2} we also have
\begin{align}\label{mult-1}
&\rho_{t}\Big(c_t(x_t,\pi_t(h_t),\cdot),c_{t+1}(\cdot,\pi_{t+1}(h_t,\cdot),\cdot),\cdots ,c_{T}(\cdot,\pi_{T}(h_t,\cdot,\ldots,\cdot),\cdot),P^{\pi^{t,h_t}}_{t+1,T}\Big )
\nonumber \\
&=  \widehat{\rho}_t\Big(
			\Big\{c_t(x_t,\pi_t(h_t),\theta)+ \sigma_t \Big (\rho_{t+1}\Big(c_{t+1}(\diamond,\pi_{t+1}(h_t,\diamond),\cdot),c_{t+2}(\cdot,\pi_{t+2}(h_t,\diamond,\cdot),\cdot), \nonumber
\\ & \cdots ,c_{T}(\cdot,\pi_{T}(h_t,\diamond,\cdot,\ldots,\cdot),\cdot);\delta_{{\theta}}  \circledast P^{\pi^{t,h_t}}_{t+1,T|\Theta}(\,\cdot\, \| \diamond )\Big );P^{\pi^{t,h_t}}_{t+1,t+1|\theta}\Big),\theta \in \bTheta\Big \};   \xi^{\pi,h_t}_{t}  \Big )
 \nonumber \\
&=  \widehat{\rho}_t\Big(
			\Big\{c_t(x_t,\pi_t(h_t),\theta)+ \sigma_t \Big (\rho_{t+1}\Big(c_{t+1}(\diamond,\pi_{t+1}(h_t,\diamond),\cdot),c_{t+2}(\cdot,\pi_{t+2}(h_t,\diamond,\cdot),\cdot), \nonumber
\nonumber \\ & \cdots ,c_{T}(\cdot,\pi_{T}(h_t,\diamond,\cdot,\ldots,\cdot),\cdot);P^{\pi^{t,h_t}}_{\theta,t+1,T}(\{\diamond\}\times \cdot)\delta_{ \theta}(\cdot)\Big );P^{\pi^{t,h_t}}_{\theta, t+1}\Big),\theta \in \bTheta\Big \};   \xi^{\pi,h_t}_{t}  \Big ),
\end{align}
	 where in the last equality we used \eqref{three-eqs} and \eqref{eq:wr-delta}, and where, for a function $f$ on $\bTheta$ and a measure $\xi$ on $\bTheta$,
\begin{align}
			\label{eq:main1}\widehat \rho _t\Big (\big \{ f(\theta),\theta \in \bTheta\big \};\xi^{\pi,h_t}_{t} \Big )= \widehat \rho _t\big (f;\xi^{\pi,h_t}_{t} \big )=\int_\bTheta f(\theta)\;\xi^{\pi,h_t}_{t} (d\theta)=E_{\xi^{\pi,h_t}_{t} } (f),
\end{align}	
and	where, for a function $v$ on $\cX$, we have (cf. \eqref{three-eqs}, \eqref{sigma-def-tom-original-0}, and \eqref{eq:delta-1})
\begin{align}
			\label{eq:main2}
\sigma_t\left (v,P^{\pi^{t,h_t}}_{\theta,t+1}\right )=\int_{\mathcal{X}}v(x)P^{\pi^{t,h_t}}_{\theta,t+1}d(x)=E_{P^{\pi^{t,h_t}}_{\theta,t+1}}(v).
\end{align}

\end{example}

\begin{example}\label{e:trials-tom}

In the previous example we proceeded from $\rho$ to $\sigma$ (via $\widehat \rho$). Here, we will do the opposite.

In clinical trials, the potency of a drug is characterized by an unknown parameter ${\theta}$.
The purpose of the trials is to estimate ${\theta}$ and to determine the optimal dose.
Let us assume for simplicity that ${\theta}$ \emph{is} the optimal dose.
If a dose $u_1$ is administered to a patient, a response $X_2$ is observed (the subscript 2 indicates that $X_2$ is not known when $u_1$ is determined).
$X_2$ is a Bernoulli random variable, with $X_2=1$ representing toxic response, and $X_2=0$  nontoxic. The probability of toxic response
is a function of $\theta$ and $u_1$, that is, $P[X_2=1] = \Psi(\theta,u_1)$. The ``cost'' is $c(\theta,u_1)$; it depends on both the applied and best doses. The cost is not observed; we only know whether the patient was toxic or not. In the second stage, the dose $u_2$ is administered to the next patient, the patient's response $X_3$ observed, and cost $c(\theta,u_2)$ incurred. The process continues for $T$ stages, with $u_T$ being the final dose recommendation, whose cost is equal to $c(\theta,u_T)$. For example,
the cost may have the form $c(\theta,u) = |u-\theta|$ to penalize for the over- and under-dosage. It is never observed.

The problem can be cast to our setting. The state space $\cX$ is $\{0,1\}$, while the unknown parameter space $\boldsymbol \Theta$ is an interval of the real line or a finite subset of the real line. Given the set-up adopted in this paper, we assume that $\boldsymbol \Theta$ is a finite subset of the real line. The transition kernel does not depend on $X$ at all; the distribution of the next $X_{t+1}$ depends on $\theta$ and $u$:
 \[
 K_\theta(0|x,u) = 1- \Psi(\theta,u),\quad K_\theta(1|x,u) = \Psi(\theta,u).
 \]
Thus, we have
 \[
 P^{\pi^{t,h_t}}_{\theta,t+1}(y) =  K_\theta(y|x_t,\pi_t(h_t)) = \1_{y=0}(1- \Psi(\theta,\pi_t(h_t)))+\1_{y=1}\Psi(\theta,\pi_t(h_t)),\quad y\in\{0,1\}.
 \]
There is a considerable leverage in choosing the form of $\sigma_t$ in a way consistent with the above set-up. For example, one can choose $\sigma_t$ in terms of the entropic risk measures, as follows
$$
\sigma(w,P) = \frac{1}{\kappa}\ln\int_\cX e^{\kappa w(y)}P(dy),
$$
for a function $f$ on $\cX$, $P\in\cP(\cX)$ and a constant $\kappa>0$ . Consequently,  for $t=1,\ldots,T-1$, using \eqref{eq:new-measure} and \eqref{ii-1}  we obtain
\begin{align*}
\sigma_t(w(\cdot,\theta);P^{\pi^{t,h_t}}_{t+1,t+1|\theta})
&=\frac{1}{\varkappa} \ln\bigg(\big(1- \Psi(\theta,\pi_t(h_t))\big) e^{\varkappa w(0,\theta)} + \Psi(\theta,\pi_t(h_t))e^{\varkappa w(1,\theta)}\bigg)\\
&=\frac{1}{\varkappa} \ln\int_{\cX}e^{\varkappa w(y,\theta)} P^{\pi^{t,h_t}}_{\theta,t+1}(dy),
\end{align*}
with $\sigma_T=0$.

Now, for a function $f$ on $\bTheta$ and a measure $\xi\in\cP(\bTheta)$, let
\[\widehat \rho _t\Big (\big \{ f(\theta),\theta \in \bTheta\big \};\xi\Big )= \widehat \rho _t (f;\xi)
=\frac{1}{\varkappa}\ln\int_\bTheta e^{\varkappa f(\theta)}\xi(d\theta), \qquad t\in\cT.
\]	
Given the above, we obtain for $t=T$ 
\begin{equation}\label{key1}
\widehat \rho _T\Big (\big \{ v(\theta),\theta \in \bTheta\big \}; \xi^{\pi,h_T}_{T} \Big )
=\frac{1}{\varkappa}\ln \int_\bTheta e^{\varkappa v(\theta) }\xi^{\pi,h_T}_{T}(d\theta),
\end{equation}
and for $t=1,\dots,T-1$,
\begin{align}\label{key}
\widehat \rho _t\Big (\big \{ v(\theta)+\sigma_t(w(\cdot,\theta);P^{\pi^{t,h_t}}_{t+1,t+1|\theta}),\theta \in \bTheta\big \}; \xi^{\pi,h_t}_{t} \Big )
&=\frac{1}{\varkappa}\ln \int_\bTheta \int_{\cX} e^{\varkappa(v(\theta) + w(y,\theta))}P^{\pi^{t,h_t}}_{\theta,t+1}(dy)\xi^{\pi,h_t}_{t}(d\theta) \nonumber \\
&=\frac{1}{\varkappa}\ln E^\pi [e^{\varkappa(v(\Theta) + w(X_{t+1},\Theta))}|\widehat H_t=h_t],
\end{align}
where the last equality follows from Lemma \ref{cacy-cacy}.

We will now derive a generic formula for $\rho_{t}$, generated by \eqref{li-risk-trans-map-controlled} and  $\sigma_t$ and $\widehat \rho _t$ as above, in case of the generic cost functions as in \eqref{eq:cost1} and \eqref{eq:cost2}. Let us fix an admissible  strategy $\pi$. For $t=T$ we have
\begin{align*}
\rho_{T}(c_{T}(x_T,\pi_T(h_T),\cdot),P^{\pi^{T,h_T}}_{T+1,T}) & =\widehat \rho_T(\{c_{T}(x_T,\pi_T(h_T),\theta),\theta \in \bTheta\};P^{\pi^{T,h_T}}_{T+1,T, \Theta})\\
& = \widehat \rho_T(c_{T}(x_T,\pi_T(h_T),\cdot);P^{\pi^{T,h_T}}_{T+1,T,\Theta})=\widehat \rho_T(c_{T}(x_T,\pi_T(h_T),\cdot);\xi^{\pi,h_T}_{T})\\
& = \frac{1}{\varkappa}\ln\int_\bTheta e^{\varkappa c_{T}(x_T,\pi_T(h_T),\theta)}\xi^{\pi,h_T}_{T}(d\theta) \\
&= \frac{1}{\varkappa}\ln E^\pi(e^{\varkappa c_{T}(x_T,\pi_T(h_T),\Theta)}|\widehat H_T=h_T).
\end{align*}
Now, note that
\[
\rho_{T}(c_{T}(x_T,\pi_T(h_T),\cdot),\delta_{\theta})= c_{T}(x_T,\pi_T(h_T),\theta),
\]
and thus
\begin{align*}
\sigma_{T-1}\big(\rho_{T}\big(c_{T}(\diamond,\pi_T(h_{T-1},\diamond),\theta);
\delta_{{\theta}}); & P^{\pi^{T-1,h_{T-1}}}_{T,T|\theta}\big)=\sigma_{T-1}\big(c_{T}(\diamond,\pi_T(h_{T-1},\diamond),\theta) ;P^{\pi^{T-1,h_{T-1}}}_{T,T|\theta}\big)
\\
& =\frac{1}{\varkappa} \ln\int_{\cX}e^{\varkappa c_{T}(x_T,\pi_T(h_{T-1},x_T),
\theta)} P^{\pi^{T-1,h_{T-1}}}_{\theta,T}(dx_T).
\end{align*}
So, for $t=T-1,$  we have
\begin{align*}
\rho_{T-1}(&c_{T-1}(x_{T-1},\pi_{T-1}(h_{T-1}),\cdot),c_{T}(\cdot,\pi_T(h_{T-1},\cdot),\cdot),P^{\pi^{T-1,h_{T-1}}}_{T,T}) \\
&=\widehat \rho_{T-1}\Big(	\Big\{c_{T-1}(x_{T-1},\pi_{T-1}(h_{T-1}),\theta)+ \sigma_{T-1}\big(\rho_{T}\big(c_{T}(\diamond,\pi_T(h_{T-1},\diamond),\cdot);
\delta_{{\theta}});P^{\pi^{T-1,h_{T-1}}}_{T,T|\theta} \big)\big), \\
& \qquad \qquad \qquad \qquad \qquad \qquad \theta \in \bTheta\Big\};P^{\pi^{T-1,h_{T-1}}}_{T,T,\Theta} \Big)
\end{align*}
\begin{align*}
\qquad & =\widehat \rho_{T-1}\Big(
				\Big\{c_{T-1}(x_{T-1},\pi_{T-1}(h_{T-1}),\theta)+ \sigma_{T-1}\big(\rho_{T}\big(c_{T}(\diamond,\pi_T(h_{T-1},\diamond),\cdot);
  \delta_{{\theta}});P^{\pi^{T-1,h_{T-1}}}_{T,T|\theta} \big)\big),\\
& \qquad \qquad \qquad \qquad \qquad \qquad \theta \in \bTheta\Big\};\xi^{\pi,h_{T-1}}_{T-1}\Big )\\
&=\frac{1}{\varkappa}\ln \int_\bTheta \int_{\cX} e^{\varkappa(c_{T-1}(x_{T-1},\pi_{T-1}(h_{T-1}),\theta) + c_{T}(x_T,\pi_{T}(h_{T-1},x_T),\theta))}
P^{\pi^{T-1,h_{T-1}}}_{\theta,T}(dx_T)\xi^{\pi,h_{T-1}}_{T-1}(d\theta) \\
&= \frac{1}{\varkappa}\ln E^\pi(e^{\varkappa(c_{T-1}(x_{T-1},\pi_{T-1}(h_{T-1}), \Theta) + c_{T}(X_T,\pi_{T}(h_{T-1},X_T), \Theta))}|\widehat H_{T-1}=h_{T-1}),
\end{align*}
where we used \eqref{eq:new-measure} and \eqref{ii-1} for the second to the last equality, and where the last equality follows from Lemma~\ref{cacy-cacy}.

Next, note that
\begin{align*}
\rho_{T-1}\big(c_{T-1}(&\diamond,\pi_{T-1}(h_{T-2},\diamond),\cdot), c_{T}(\cdot,\pi_{T}(h_{T-2},\diamond,\cdot),\cdot);
\delta_{\theta}\circledast P^{\pi^{T-1,(h_{T-2},\diamond)}}_{T,T|\Theta}(\cdot \| \diamond)\big ) \\
 & =\frac{1}{\varkappa}\ln \int_{\cX} e^{\varkappa(c_{T-1}(\diamond,\pi_{T-1}(h_{T-2},\diamond),\theta) + c_{T}(y,\pi_{T}(h_{T-2},\diamond,y),\theta))}
P^{\pi^{T-1,(h_{T-2},\diamond)}}_{\theta,T}(dy),
\end{align*}
and
\begin{align*}
\sigma_{T-2}\big(\rho_{T-1}&\big(c_{T-1}(\diamond,\pi_{T-1}(h_{T-2},\diamond),\theta), c_{T}(\cdot,\pi_{T}(h_{T-2},\diamond,\cdot),\theta); \\
& \qquad \qquad \qquad \qquad \qquad \qquad  \delta_{{\theta}}\circledast P^{\pi^{T-1,(h_{T-2},\diamond)}}_{T,T|\Theta}(\cdot \| \diamond) \big);P^{\pi^{T-2,h_{T-2}}}_{T-1,T-1|\theta}\big)\\
&=\frac{1}{\varkappa}\ln \int_{\cX} e^{\varkappa(c_{T-1}(x_{T-1},\pi_{T-1}(h_{T-2},x_{T-1}),\theta) + c_{T}(x_{T},\pi_{T}(h_{T-2},x_{T-1},x_{T}),\theta))}\\
& \qquad \qquad \qquad \qquad \qquad \qquad  P^{\pi^{T-1,(h_{T-2},x_{T-1})}}_{\theta,T}(dx_{T})P^{\pi^{T-2,h_{T-2}}}_{\theta,T-1}(dx_{T-1}).
\end{align*}
We take now $t=T-2$. In this case,
\begin{multline*}
\rho_{T-2,T}(c_{T-2}(x_{T-2},\pi_{T-2}(h_{T-2}),\cdot),c_{T-1}(\diamond,\pi_{T-1}(h_{T-2},\diamond),\cdot),
c_{T}(\cdot,\pi_{T}(h_{T-2},\diamond,\cdot),\cdot),P^{\pi^{T-2,h_{T-2}}}_{T-1,T}) \\
=\widehat \rho_{T-2}\Big(
				\Big\{c_{T-2}(x_{T-2},\pi_{T-2}(h_{T-2}),\theta)+ \sigma_{T-2}\big(\rho_{T-1}\big(c_{T-1}(\diamond,\pi_{T-1}(h_{T-2},\diamond),\theta), c_{T}(\cdot,\pi_{T}(h_{T-2},\diamond,\cdot),\theta);\\
\delta_{{\theta}}\circledast P^{\pi^{T-1,(h_{T-2},\diamond)}}_{T,T|\Theta}(\cdot \| \diamond) \big);P^{\pi^{T-2,h_{T-2}}}_{T-1,T-1|\theta}\big),\theta \in \bTheta\Big\};P^{\pi^{T-2,h_{T-2}}}_{T-1,T|\Theta} \Big)\\
=\widehat \rho_{T-2}\Big(
				\Big\{c_{T-2}(x_{T-2},\pi_{T-2}(h_{T-2}),\theta)+ \sigma_{T-2}\big(\rho_{T-1}\big(c_{T-1}(\diamond,\pi_{T-1}(h_{T-2},\diamond),\theta), c_{T}(\cdot,\pi_{T}(h_{T-2},\diamond,\cdot),\theta); \\
\delta_{{\theta}}\circledast P^{\pi^{T-1,(h_{T-2},\diamond)}}_{T,T|\Theta}(\cdot \| \diamond) \big);P^{\pi^{T-2,h_{T-2}}}_{T-1,T-1|\theta}\big),\theta \in \bTheta\Big\};\xi^{\pi,h_{T-2}}_{T-2}\Big )\\
=\frac{1}{\varkappa}\ln \int_\bTheta \int_{\cX} \int_{\cX} e^{\varkappa(c_{T-2}(x_{T-2},\pi_{T-2}(h_{T-2}),\theta)+c_{T-1}(x_{T-1},\pi_{T-1}(h_{T-2},x_{T-1}),\theta) + c_{T}(x_T,\pi_{T}(h_{T-2},x_{T-1},x_{T}),\theta))}\\
P^{\pi^{T-1,h_{T-2},x_{T-1}}}_{\theta,T}(dx_{T})P^{\pi^{T-2,h_{T-2}}}_{\theta,T-1}(dx_{T-1})\xi^{\pi,h_{T-2}}_{T-2}(d\theta) \\
= \frac{1}{\varkappa}\ln E^\pi(e^{\varkappa(c_{T-2}(x_{T-2},\pi_{T-2}(h_{T-2}),\Theta)+c_{T-1}(X_{T-1},\pi_{T-1}(h_{T-2},X_{T-1}),\Theta) + c_{T}(X_T,\pi_{T}(h_{T-2},X_{T-1},X_T),\Theta))} \\
|\widehat H_{T-2}=h_{T-2}),
\end{multline*}
where the last equality follows from Lemma~\ref{cacy-cacy}.

Proceeding in the analogous way for $t=T-3,\ldots,1$ we finally obtain
\begin{align}\label{eq:RS-criterion}
\rho_{1,T}(c_{1}(x_{1},\pi_{1}(h_{1}),&\cdot),\ldots,c_{T}(\cdot,\pi_{T}(h_1,\cdot,\ldots,\cdot),\cdot),P^{\pi^{1,h_1}}_{2,T})  \nonumber \\
& = \frac{1}{\varkappa}\ln E^\pi\left (e^{\varkappa\sum_{k=1}^T\, c_{k}(X_{k},\pi_{k}(h_{k})), \Theta)}|\widehat H_{1}=h_{1}\right ),
\end{align}
which gives us the risk-sensitive criterion with entropic utility (cf. \cite{Baeuerle2014,DavisLleo2014}) .
\end{example}

\section{Recursive Risk Filters}\label{s:bayes-op}

Let us fix $t\in\set{1,\dots,T-1}$. To alleviate notation, for all $\pi \in \varPi$, we write for  fixed functions $Z_{t,s}(\cdot_{s-t},\cdot_1)\in \cZ_{t,s}$, $s=t,\ldots,T$.
Since $t$ is fixed, we will again simply write $Z_s$ instead of $Z_{t,s}$, for $s\in\cT_t$.
\begin{align}\label{policy-value-gen-tom}
v_t^{\pi}(h_t) &=\rho_t(Z_{t},Z_{t+1},\dots,Z_{T}; P^{\pi^{t,h_t}}_{t+1,T})\\
\widetilde v_{t+1}^{\pi,\theta}((h_t,x_{t+1}))&:=\rho_{t+1}\big(Z_{t+1}(x_{t+1},\cdot_1), \dots ,Z_{T}(x_{t+1},\cdot_{T-t-1},\cdot_1);\delta_{{\theta}}  \circledast P^{\pi^{t,h_t}}_{t+1,T|\Theta}(\,\cdot\, \| x_{t+1} )\big)\nonumber \\
&=\rho_{t+1}\big(Z_{t+1}(x_{t+1},\cdot_1), \dots ,Z_{T}(x_{t+1},\cdot_{T-t-1},\cdot_1);P^{\pi^{t,h_t}}_{\theta,t+1,T}(\{x_{t+1}\}\times \cdot)\delta_{ \theta}(\cdot)\big),          		
\end{align}
where for the last equality we used \eqref{eq:wr-delta}.

The quantity $v_t^{\pi}(h_t)$ evaluates the policy $\pi$ at the time $t$ and with the history $h_t$ in the original problem. 	

Recall that (cf. \eqref{three-eqs}) $P^{\pi^{t,h_t}}_{t+1,T,\Theta}=\xi^{\pi,h_t}_{t}$. Thus, the key equation \eqref{li-risk-trans-map-controlled} can be written more compactly as follows:
\begin{align}
v_t^{\pi}(h_t)&=\widehat \rho_t\Big(\Big\{Z_{t}(\theta)+ \sigma_t\big(
\rho_{t+1}\big(Z_{t+1}(\diamond,\cdot_1), \dots ,Z_{T}(\diamond,\cdot_{T-t-1},\cdot_1);\nonumber \\
&\qquad \qquad \qquad\qquad \delta_{{\theta}}  \circledast P^{\pi^{t,h_t}}_{t+1,T| \Theta}(\,\cdot\, \| \diamond )\big);P^{\pi^{t,h_t}}_{t+1,t+1|\theta}\big),\theta \in \bTheta\Big\};
P^{\pi^{t,h_t}}_{t+1,T,\Theta} \Big)\nonumber  \\
&=\widehat \rho_t\Big(
				\Big\{Z_{t}(\theta)+ \sigma_t\big(Z_{t+1}(\diamond,\cdot_1), \dots ,Z_{T}(\diamond,\cdot_{T-t-1},\cdot_1);\nonumber \\
&\qquad \qquad \qquad\qquad \delta_{{\theta}}  \circledast P^{\pi^{t,h_t}}_{t+1,T|\Theta}(\,\cdot\, \| \diamond )\big);P^{\pi^{t,h_t}}_{t+1,t+1|\theta}\big),\theta \in \bTheta\Big\};
			\xi^{\pi,h_t}_{t} \Big) \nonumber\\
&= \widehat \rho_t\Big(\Big\{Z_{t}(\theta)+ \sigma_t\big(\widetilde v_{t+1}^{\pi,\theta}((h_t,\diamond));P^{\pi^{t,h_t}}_{t+1,t+1|\theta}\big),\theta \in \bTheta\Big\};
				\xi^{\pi,h_t}_{t} \Big),\nonumber\\
&= \widehat \rho_t\Big(\Big\{Z_{t}(\theta)+ \sigma_t\big(\widetilde v_{t+1}^{\pi,\theta}((h_t,\diamond));P^{\pi^{t,h_t}}_{\theta,t+1}\big),\theta \in \bTheta\Big\};
				\xi^{\pi,h_t}_{t} \Big),\label{li-risk-trans-map-controlled-new-use}
\end{align}
with $\sigma_t$ given in \eqref{sigma-def-tom-original-0}, and where we used \eqref{ii-1-new}  in the last equality.

Note that in equation \eqref{li-risk-trans-map-controlled-new-use} we have $\widetilde v_{t+1}^{\pi,\theta}$ on the right hand side. Thus, this equation does not provide a convenient recursion for the quantities $v^\pi_t$. This leads us to the following concept,

\begin{definition}
A dynamic risk filter $\rho$ is called \textit{recursive} if it is satisfies the properties stated in Theorem \ref{t:filter-structure} and
\begin{equation*}
v_t^{\pi}(h_t)=\widehat \rho_t\Big(\Big\{Z_{t,t}(\theta)+ \sigma_t\big( v_{t+1}^{\pi}((h_t,\diamond));P^{\pi^{t,h_t}}_{\theta,t+1}\big),\theta \in \bTheta\Big\};
				\xi^{\pi,h_t}_{t} \Big),
\end{equation*}
for $t=T-1,\ldots,1$, with
\begin{equation*}
	v_T^{\pi}(h_T)=\widehat \rho_t\Big(\Big\{Z_{T,T}(\theta),\theta \in \bTheta\Big\};
		\xi^{\pi,h_T}_{T} \Big).
\end{equation*}
\end{definition}

\subsection{Examples of recursive dynamic risk-filters}
We will show here that risk-filters considered in Example~\ref{rem:notations} and Example~\ref{e:trials-tom} are recursive.
\medskip

In the case of the additive risk rewards, that is Example~\ref{rem:notations}, using \eqref{mult} we get
\begin{align}\label{eq:main-1}
v^\pi_t(h_t)&=\rho_t\Big(c_t(x_t,\pi_t(h_t),\cdot),c_{t+1}(\cdot,\pi_{t+1}(h_t,\cdot),\cdot),\nonumber \\
&\qquad \qquad \qquad \qquad \qquad \qquad  c_{t+2}(\cdot,\pi_{t+2}(h_t,\cdot,\cdot),\cdot),\cdots ,c_{T}(\cdot,\pi_{T}(h_t,\cdot,\ldots,\cdot),\cdot),P^{\pi^{t,h_t}}_{t+1,T}\Big )\nonumber\\
&=E^\pi\left[c_t(x_t,\pi_t(h_t),\Theta) + \sum_{s=t+1}^T c_s(\wh X_s, \pi_s(\wh X_{t+1},\ldots,\wh X_s), \Theta) \mid \wh H_t=h_t\right],
\end{align}
for $t\in\cT$, where again we use the standard convention that an empty sum is zero (i.e. $\sum_{s=t+T}^T\cdots =0$ in our case).

Given the above, we obtain that the risk filter considered in Example~\ref{rem:notations} is recursive:

\begin{lemma}\label{eq:hope}
Let $\widehat \rho_t$ and $\sigma_t$ be given as in \eqref{eq:main1} and \eqref{eq:main2}, respectively. We have
\begin{align}\label{eq:main-2-alt}
			&v^\pi_t(h_t)= \widehat \rho_t\Big(
			\Big\{c_t(x_t,\pi_t(h_t),\theta)+ \sigma_t\big( v_{t+1}^{\pi}((h_t,\cdot));P^{\pi^{t,h_t}}_{\theta,t+1}\big),\theta \in \bTheta\Big\};   \xi^{\pi,h_t}_{t}  \Big)
\end{align}
for $t=T-1,\ldots,1$, with
\begin{align}
			\label{eq:main-2-alt-T}
v^\pi_T(h_T)=\int _\bTheta\, c_T(x_T,\pi_T(h_T),\theta)\xi^{\pi,h_T}_{T}(d\theta).
\end{align}

\end{lemma}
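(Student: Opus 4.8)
The plan is to substitute the explicit forms of $\widehat\rho_t$ and $\sigma_t$ from \eqref{eq:main1} and \eqref{eq:main2}, which reduces the claimed recursion \eqref{eq:main-2-alt} to the identity
\[
v^\pi_t(h_t) = \int_\bTheta \Big[\, c_t(x_t,\pi_t(h_t),\theta) + \int_\cX v^\pi_{t+1}((h_t,x))\, P^{\pi^{t,h_t}}_{\theta,t+1}(dx)\,\Big]\, \xi^{\pi,h_t}_t(d\theta),
\]
and then to prove this by the tower property of conditional expectation on the Bayesian space $(\widehat\varOmega,\widehat\cF,P^\pi)$. I would start from the closed form \eqref{eq:main-1} for $v^\pi_t(h_t)$ and split the running cost into the current term $c_t(x_t,\pi_t(h_t),\Theta)$ and the tail $T_t:=\sum_{s=t+1}^T c_s(\widehat X_s,\pi^{t,h_t}_s(\widehat X_{t+1},\ldots,\widehat X_s),\Theta)$.

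For the current term, since it is a function of $\Theta$ alone once $h_t$ is fixed and $\xi^{\pi,h_t}_t$ is by definition \eqref{eq:xi-tCond} the posterior of $\Theta$ given $\widehat H_t=h_t$, its conditional expectation is $\int_\bTheta c_t(x_t,\pi_t(h_t),\theta)\,\xi^{\pi,h_t}_t(d\theta)$. The key step is the tail. Conditioning first on $\widehat H_{t+1}$ and then on $\widehat H_t=h_t$ gives $E^\pi[T_t\mid \widehat H_t=h_t]=E^\pi\big[E^\pi[T_t\mid \widehat H_{t+1}]\mid \widehat H_t=h_t\big]$, where the inner conditional expectation equals $v^\pi_{t+1}(\widehat H_{t+1})$ by \eqref{eq:main-1} applied at level $t+1$. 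Here I must check the bookkeeping of tail strategies: for $s\ge t+2$ the substitution $\widehat X_{t+1}=x$ turns $\pi^{t,h_t}_s(\widehat X_{t+1},\ldots,\widehat X_s)$ into $\pi^{t+1,(h_t,x)}_s(\widehat X_{t+2},\ldots,\widehat X_s)$, so that the tail seen from time $t$ and the value $v^\pi_{t+1}((h_t,x))$ genuinely coincide.

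It then remains to disintegrate $E^\pi[v^\pi_{t+1}((h_t,\widehat X_{t+1}))\mid \widehat H_t=h_t]$. Since $v^\pi_{t+1}((h_t,\cdot))$ depends only on $X_{t+1}$, this equals $\int_\cX v^\pi_{t+1}((h_t,x))\,\nu(dx)$, where $\nu$ is the predictive law of $X_{t+1}$ given $\widehat H_t=h_t$. Decomposing $\nu$ over the posterior and invoking \eqref{eq:P-theta-t+1} (equivalently, Lemma~\ref{lemma:2.2}) yields $\nu(B)=\int_\bTheta P^{\pi^{t,h_t}}_{\theta,t+1}(B)\,\xi^{\pi,h_t}_t(d\theta)$; interchanging the finite sums and recognizing the inner integral as $\sigma_t\big(v^\pi_{t+1}((h_t,\cdot));P^{\pi^{t,h_t}}_{\theta,t+1}\big)$ via \eqref{eq:main2} produces exactly \eqref{eq:main-2-alt}. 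The terminal identity \eqref{eq:main-2-alt-T} is immediate from \eqref{eq:main-1} at $t=T$, where the empty sum vanishes and the conditional expectation reduces to integration against $\xi^{\pi,h_T}_T$.

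The main obstacle I anticipate is organizational rather than conceptual: the careful matching of the tail strategies $\pi^{t,h_t}_s$ and $\pi^{t+1,(h_t,x)}_s$ under the conditioning, and the justification that the one-step predictive kernel factors as the posterior $\xi^{\pi,h_t}_t$ composed with the transition kernel $P^{\pi^{t,h_t}}_{\theta,t+1}$. Both are underpinned by the disintegrations already established in Lemmas~\ref{lemma:2.2} and~\ref{cacy-cacy}; indeed, an alternative and fully mechanical route is to expand both $v^\pi_t(h_t)$ and $v^\pi_{t+1}((h_t,x))$ via the product-kernel formula of Lemma~\ref{cacy-cacy} and verify directly that the right-hand side of \eqref{eq:main-2-alt} reassembles the same iterated integral, bypassing the tower property altogether.
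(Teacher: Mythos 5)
Your proposal is correct and follows essentially the same route as the paper's proof: start from the closed form \eqref{eq:main-1}, apply the tower property to identify the inner conditional expectation of the tail with $v^\pi_{t+1}(\widehat H_{t+1})$, disintegrate over $(\theta, x_{t+1})$ via the posterior and one-step kernel (the paper invokes Lemma~\ref{cacy-cacy} for this in one shot, where you split the two terms and use Lemma~\ref{lemma:2.2}/\eqref{eq:P-theta-t+1} — the same substance), and then read off $\widehat\rho_t$ and $\sigma_t$ from \eqref{eq:main1} and \eqref{eq:main2}. Your explicit check of the tail-strategy bookkeeping ($\pi^{t,h_t}_s$ versus $\pi^{t+1,(h_t,x)}_s$) is a point the paper leaves implicit, but it confirms rather than changes the argument.
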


\begin{proof}
Fix $t\in \{1,\ldots,T-1\}.$ First, using \eqref{eq:main-1} and the tower property of conditional expectations we get
\begin{align*}
v_t^\pi(\widehat H_t) &= E^\pi\left[c_t(\widehat X_t,\pi_t(\widehat H_t),\Theta) + \sum_{s=t+1}^T c_s(\widehat X_s, \pi_s^{t,h_t}(\widehat X_{t+1},\ldots,\widehat X_s), \Theta) \mid \widehat H_t\right]\\
 &= E^\pi\left[c_t(\widehat X_t,\pi_t(\widehat  H_t), \Theta) + E^\pi \left[\sum_{s=t+1}^T c_s(\widehat X_s, \pi_s(\widehat X_{t+1},\ldots,\widehat X_s),\Theta) \mid \widehat H_{t+1}\right]\mid \widehat H_t\right], \\
& = E^\pi[c_t(\widehat X_t,\pi_t(\widehat  H_t), \Theta) + v_{t+1}^\pi(\widehat H_{t+1}) \mid \widehat H_t],
\end{align*}
and so
\begin{align*}
v_t^\pi(h_t) = E^\pi[c_t(x_t,\pi_t(h_t), \Theta) + v_{t+1}^\pi(h_t,\widehat X_{t+1}) \mid \widehat H_t=h_t].
\end{align*}
Next, by Lemma~\ref{cacy-cacy}, we have
\begin{equation}\label{eq:add-recurrent-v}
v_t^\pi(h_t) = \int_{\boldsymbol{\Theta}} \int_\cX \left(c_t(x_t,\pi_t(h_t), \theta) + v_{t+1}^\pi(h_{t}, x_{t+1}) \right)
P_{\theta,t+1}^{\pi^{t, h_t}}(dx_{t+1})\; \xi_t^{\pi^{t,h_t}}(d\theta),
\end{equation}
and by taking into account the form of $\widehat \rho_t$ and $\sigma_t$ as in \eqref{eq:main1} and \eqref{eq:main2}, respectively, we obtain \eqref{eq:main-2-alt}. Finally, \eqref{eq:main-2-alt-T} is a direct consequence of \eqref{eq:main-1} and Lemma~\ref{cacy-cacy}.

\end{proof}

In the case of the  risk-sensitive rewards, that is Example  \ref{e:trials-tom}, the recursiveness of $\rho$ can be demonstrated in a way analogous to the above.

\section{Risk-Averse Control Problem}\label{s:RAC-problem}

	Let $v_1^\pi$ be as in \eqref{policy-value-gen-tom}. The control problem is to find
	\begin{equation}\label{min}
	\min_{\pi\in \Pi} v_1^{\pi}(h_1),
	\end{equation}
	as well as the optimal policy, say $\pi^*$, for which
$ v_1^{\pi^*}(h_1)=\min_{\pi\in \Pi} v_1^{\pi}(h_1).$
Note that given our set-up, an optimal policy does exist because the set $\Pi$ is finite. However, we are interested in seeking an optimal policy in the class of quasi-Markov policies.

\begin{definition} \label{def:quasi-markov-policy}
		A policy $\pi \in \varPi$ is  \emph{quasi-Markov (QMP)} if \[\pi_t(h_t)=\phi_t(x_t,\xi^{\pi,h_t}_t)\]
			for some function $\phi_t\, :\, \cX \times \bTheta \rightarrow \mathcal{U}$, $t=1,\ldots,T$.
	\end{definition}

\subsection{The Bayes Operator}\label{s:Bayes-Operator2}

At each time $t$ and for every policy $\pi$ and history $h_t$, the measure
	$P^{\pi^{t,h_t}}_{t+1}$ (cf. \eqref{eq:new-measure-1}) describes the conditional joint distribution of the pair $(\widehat X_{t+1},\Theta)$ in $\cX \times \bTheta$.
	
This measure admits two natural disintegrations. One of them is already obtained from \eqref{eq:I-T-integrated2}, repeated here :
\begin{align*}
P^{\pi^{t,h_t}}_{t+1}(B\times D)&=P^{\pi^{t,h_t}}_{t+1,T}(B\times \mathcal{X}^{T-t-1}\times D)=\int_D\, {P^{\pi^{t,h_t}}_{\theta,t+1,T}(B\times \mathcal{X}^{T-t-1})}\;\xi^{\pi,h_t}_{t}(d\theta)\\
&= P^\pi[\widehat X_{t+1}\in B,\Theta \in D \, | \, \widehat H_t = h_t],
\end{align*}
where $\xi^{\pi,h_t}_{t}\in \cP(\boldsymbol\Theta)$, is given as (cf. \eqref{eq:xi-tCond} and \eqref{i-1})
$\xi^{\pi,h_t}_{t}(D)=P^\pi[\Theta \in D \, | \, \widehat H_t = h_t]=P^{\pi^{t,h_t}}_{t+1,\Theta}(D).$
One can also disintegrate $P^{\pi^{t,h_t}}_{t+1}$ into its marginal on $\cX$, say\footnote{For simplicity of notations, we write $P^{\pi^{t,h_t}}_{t+1,X}$ instead of more coherent notation $P^{\pi^{t,h_t}}_{t+1,X_{t+1}}$. Similar remark applies to the kernel $P^{\pi^{t,h_t}}_{t+1|X}$.}  $ P^{\pi^{t,h_t}}_{t+1,X}$, and the corresponding stochastic kernel, say $P^{\pi^{t,h_t}}_{t+1|X}$ from $\cX$ to $\boldsymbol \Theta$. That is, for any $B\times D\subset \cX\times\boldsymbol{\Theta}$,
\begin{align}
P^{\pi^{t,h_t}}_{t+1}(B\times D) & = (P^{\pi^{t,h_t}}_{t+1, X} \circledast P^{\pi^{t,h_t}}_{t+1|{X}}) (B\times D), \nonumber\\
& =\int_B\, P^{\pi^{t,h_t}}_{t+1|x}(D)P^{\pi^{t,h_t}}_{t+1,X}(dx) \nonumber \\
& =P^\pi[\widehat X_{t+1}\in B,\Theta \in D \, | \, \widehat H_t = h_t],
\label{eq:I-T-integrated-Tom}
\end{align}
where we used the simplified notation $P^{\pi^{t,h_t}}_{t+1|x}(D)$ for $P^{\pi^{t,h_t}}_{t+1|{X}}(x,D).$

The kernel $P^{\pi^{t,h_t}}_{t+1|x}$ is the Bayes operator which describes the dynamics of the belief states, as documented in the next result.

\begin{lemma}\label{Bayes-dyn}
For $t=1,\ldots,T-1$, $h_t\in H_t$, $x_{t+1}\in \cX$ and $D\subset \bTheta$, we have
\begin{align}\label{Bayes-trb}
\xi^{\pi,(h_t,x_{t+1})}_{t+1}(D)&=P^{\pi^{t,h_t}}_{t+1\mid x_{t+1}}(D) \nonumber \\
& = \int_{D} \frac{K_\theta(x_{t+1}|x_{t},\pi_{t}(h_t))}{P^{\pi^{t,h_t}}_{t+1}[\{x_{t+1}\}\times \bTheta]} \, \xi^{\pi,h_t}_{t}(d\theta),
\end{align}
where
\begin{equation}\label{eq:rec-0}
\xi^{\pi,x_{1}}_{1}(\theta)=\xi_{1}(\theta).
\end{equation}
\end{lemma}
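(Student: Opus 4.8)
The plan is to prove the Bayes recursion by a direct disintegration argument, working from the definition of the conditional belief $\xi^{\pi,(h_t,x_{t+1})}_{t+1}$ and reducing it to the one-step transition kernel $K_\theta$. First I would recall that by \eqref{eq:xi-tCond} the posterior belief at time $t+1$ after observing history $(h_t,x_{t+1})$ is
\[
\xi^{\pi,(h_t,x_{t+1})}_{t+1}(D)=P^\pi(\Theta\in D\mid \widehat H_{t+1}=(h_t,x_{t+1})).
\]
The key identification is that this equals the stochastic kernel $P^{\pi^{t,h_t}}_{t+1\mid x_{t+1}}(D)$ appearing in the disintegration \eqref{eq:I-T-integrated-Tom}. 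This is essentially a matter of matching two descriptions of the same conditional law: the kernel $P^{\pi^{t,h_t}}_{t+1|X}$ is by construction the conditional distribution of $\Theta$ given $\widehat X_{t+1}$ under the measure $P^{\pi^{t,h_t}}_{t+1}$, which by \eqref{eq:new-measure-1} is exactly the $P^\pi$-conditional law of $(\widehat X_{t+1},\Theta)$ given $\widehat H_t=h_t$. Hence conditioning further on $\widehat X_{t+1}=x_{t+1}$ recovers the posterior after the full history $(h_t,x_{t+1})$, giving the first equality in \eqref{Bayes-trb}.

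For the second (explicit Bayes) equality, I would apply Bayes' formula inside the disintegration. Using the first disintegration of $P^{\pi^{t,h_t}}_{t+1}$ from \eqref{eq:I-T-integrated-Tom}, namely $P^{\pi^{t,h_t}}_{t+1}(B\times D)=\int_D P^{\pi^{t,h_t}}_{\theta,t+1}(B)\,\xi^{\pi,h_t}_{t}(d\theta)$, together with the identification $P^{\pi^{t,h_t}}_{\theta,t+1}(\{x_{t+1}\})=K_\theta(x_{t+1}\mid x_t,\pi_t(h_t))$ from \eqref{eq:P-theta-t+1}, the joint mass at $(\{x_{t+1}\}\times D)$ becomes
\[
P^{\pi^{t,h_t}}_{t+1}(\{x_{t+1}\}\times D)=\int_D K_\theta(x_{t+1}\mid x_t,\pi_t(h_t))\,\xi^{\pi,h_t}_{t}(d\theta).
\]
Dividing by the marginal $P^{\pi^{t,h_t}}_{t+1}(\{x_{t+1}\}\times\bTheta)$ (which is positive by the standing assumption $P^\pi(\widehat H_{t+1}=(h_t,x_{t+1}))>0$) yields precisely the stated integral formula, i.e.\ the posterior kernel is the prior $\xi^{\pi,h_t}_t$ reweighted by the likelihood $K_\theta(x_{t+1}\mid x_t,\pi_t(h_t))$. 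The base case \eqref{eq:rec-0} is immediate from the definition $\xi^{\pi,h_1}_1=\xi_1$ in \eqref{eq:xi-tCond}.

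I expect the only genuine subtlety to be the first equality, i.e.\ carefully justifying that the abstract disintegration kernel $P^{\pi^{t,h_t}}_{t+1|x_{t+1}}$ of the one-step-ahead measure coincides with the genuine posterior $\xi^{\pi,(h_t,x_{t+1})}_{t+1}$ defined by conditioning $P^\pi$ on the \emph{longer} history $\widehat H_{t+1}$. Morally this is the tower/consistency property of conditional expectations: conditioning on $\widehat H_t=h_t$ and then on $\widehat X_{t+1}=x_{t+1}$ is the same as conditioning on $\widehat H_{t+1}=(h_t,x_{t+1})$, and the Markov structure guarantees that the kernel depends on $h_t$ only through $x_t$ and the accumulated belief. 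Because all spaces are finite and all conditioning events have positive probability, this can be verified by an elementary computation of conditional probabilities rather than invoking measure-theoretic regular-conditional-distribution machinery, so the obstacle is one of bookkeeping rather than of substance.
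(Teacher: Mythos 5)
Your proposal is correct and takes essentially the same route as the paper's proof: identify the disintegration kernel $P^{\pi^{t,h_t}}_{t+1|x_{t+1}}$ with the posterior $\xi^{\pi,(h_t,x_{t+1})}_{t+1}$ via the chain rule for conditional probabilities (using \eqref{eq:new-measure-1} and the positivity assumption), and then obtain the explicit formula by Bayes' rule with $K_\theta(x_{t+1}|x_t,\pi_t(h_t))$ as the likelihood via \eqref{eq:P-theta-t+1}. The only nit is a citation slip: the $\theta$-disintegration $P^{\pi^{t,h_t}}_{t+1}(B\times D)=\int_D P^{\pi^{t,h_t}}_{\theta,t+1}(B)\,\xi^{\pi,h_t}_{t}(d\theta)$ that you invoke comes from \eqref{eq:I-T-integrated2} together with \eqref{eq:new-measure-1} (the first display of Section~\ref{s:Bayes-Operator2}), not from \eqref{eq:I-T-integrated-Tom}, which is the other disintegration (marginal on $\cX$ composed with the kernel from $\cX$ to $\bTheta$).
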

\begin{proof}
First, note that
\begin{equation}\label{Q}
P^{\pi^{t,h_t}}_{t+1,X}(B)=P^\pi[\widehat X_{t+1}\in B \, | \, \widehat H_{t} = h_t].
\end{equation}

Take $B=\{x_{t+1}\}$. Then, using \eqref{eq:I-T-integrated-Tom} and \eqref{Q}, we obtain
		\[P^\pi[\widehat X_{t+1}=x_{t+1},\Theta \in D \, | \, \widehat H_t = h_t]=
 P^{\pi^{t,h_t}}_{t+1|x_{t+1}}(D)P^\pi[\widehat X_{t+1}=x_{t+1} \, | \, \widehat H_{t} = h_t],\]
		and thus
\begin{align*}
P^{\pi^{t,h_t}}_{t+1|x_{t+1}}(D)&=\frac{P^\pi[\widehat X_{t+1}=x_{t+1}, \Theta \in D \, | \, \widehat H_t = h_t]}{P^\pi[\widehat X_{t+1}=x_{t+1} \, | \, \widehat H_{t} = h_t]}\nonumber \\  & =P^\pi[\Theta \in D \, | \, \widehat H_{t+1} = (h_t,x_{t+1})]=\xi^{\pi,(h_t,x_{t+1})}_{t+1}(D),
\end{align*}
which proves the first equality in \eqref{Bayes-trb}. The second one follows from the following chain of equalities,
\begin{align*}
\xi^{\pi,(h_t,x_{t+1})}_{t+1}(\theta) & =P^\pi[\Theta =\theta \, | \, \widehat H_{t+1} = (h_t,x_{t+1})] \\
		& =P^\pi[\Theta =\theta \, | \, \widehat H_{t} = h_t]\frac{P^\pi[\widehat X_{t+1}=x_{t+1}\, |\,\boldsymbol \Theta =\theta , \widehat H_{t} = h_t] }{P^\pi[\widehat X_{t+1}=x_{t+1}\, |\, \widehat H_{t} = h_t]}\\
		& =\xi^{\pi,h_t}_{t}(\theta)\frac{K_\theta(x_{t+1}|x_{t},\pi_{t}(h_t))}{P^{\pi^{t,h_t}}_{t+1}[\{x_{t+1}\}\times \bTheta]},
\end{align*}
where in last equality we used \eqref{eq:P-theta-t+1} and that $P^\pi_\theta (B) = P^\pi(B|\Theta=\theta)$.
\end{proof}

\subsection{Optimal control problem corresponding to Example~\ref{ex:additive-reward}}\label{sec:add}
In this section we will study the optimal control problem corresponding to the Example~\ref{ex:additive-reward} classical additive reward case, that will serve as the base for the general case.
In what follows, we denote by $(x,\xi)$ an element of the set $\mathcal{X}\times \mathcal{P}(\boldsymbol \Theta).$

Recall \eqref{eq:main-1}. Accordingly, we have for $t=T$ 
\begin{align}
\label{eq:cond-exp-1-sol-T}
v^\pi_T(h_T)&=\rho_{T,T}\Big(c_T(x_T,\pi_T(h_T),\cdot),P^{\pi^{T,h_T}}_{T+1,T}\Big) \nonumber \\
&=\int_{\boldsymbol \Theta}c_T(x_T,\pi_T(h_T),\theta))\; P^{\pi^{T,h_T}}_{T+1,T}(d\theta) \nonumber \\
&=\int_{\boldsymbol \Theta}c_T(x_T,\pi_T(h_T),\theta)\; \xi^{\pi,h_T}_T(d\theta)\\
& = E^\pi\Big (c_T(x_T,\pi_T(h_T), \Theta) \mid \widehat H_T=h_T\Big ). \nonumber
\end{align}

Thus, observing  that $\xi^{\pi,h_T}_T$, does not depend on $\pi_T$, letting $x_T=x$ and $\xi^{\pi,h_T}_T=\xi$, we compute the candidate-optimal quasi-Markov control $\phi_T$ as
\begin{equation}\label{opt-T}
\phi_T(x,\xi)=\argmin_{u\in \cU}\, \int_{\boldsymbol \Theta}\, c_T(x,u,\theta)\;
 \xi(d\theta).
\end{equation}

We define the Bellman function at time $t=T$:
\begin{equation}\label{Bell-opt-T}
V_T(x,\xi)= \min_{u\in \cU}\, \int_{\boldsymbol \Theta}\, c_T(x,u,\theta)\,
 \xi(d\theta)=\int_{\boldsymbol \Theta}\, c_T(x,\phi_T(x,\xi),\theta)\,
 \xi(d\theta).
\end{equation}

Now, we proceed to time $t=T-1$. Noting that $\xi^{\pi,h_{T-1}}_{T-1}$, does not depend on $\pi_{T-1}$, letting $x_{T-1}=x$ and $\xi^{\pi,h_{T-1}}_{T-1}=\xi$, we compute the candidate-optimal quasi-Markov control $\phi_{T-1}$ as
\begin{equation}
\label{opt-T-1}
\phi_{T-1}(x,\xi)=\argmin_{u\in \cU}\, \int_{\boldsymbol \Theta}\, \Big(c_{T-1}(x,u,\theta)+\int_{\mathcal{X}}V_{T}(x_{T},{\wt \xi^{u,x_{T},\xi}_{T}})K_{\theta}(dx_{T}|x,u)
 \Big )\, \xi(d\theta),
\end{equation}
where (cf. \eqref{Bayes-trb})
\begin{equation}\label{xi-T}
\wt \xi^{u,x_{T},\xi}_{T}(\theta)=\xi(\theta)\frac{K_\theta(x_{T} |  x,u)}{\int_{\bTheta}\, K_\theta(x_{T} |  x,u)\, \xi(d\theta)}.
\end{equation}
The corresponding Bellman function is
\begin{align*}
V_{T-1}(x,\xi)&= \min_{u\in \cU}\, \int_{\boldsymbol \Theta}\, \Big(c_{T-1}(x,u,\theta)+\int_{\mathcal{X}}V_{T}(x_{T},{\wt \xi^{u,x_{T},{\xi}}_{T}})K_{\theta}(dx_{T}|x,u)
 \Big )\, \xi(d\theta)\nonumber \\ &= \int_{\boldsymbol \Theta}\, \Big(c_{T-1}(x,\phi_{T-1}(x,\xi),\theta)+\int_{\mathcal{X}}V_{T}(x_{T},{\wt \xi^{\phi_{T-1}(x,\xi),x_{T},\xi}_{T}})K_{\theta}(dx_{T}|x,\phi_{T-1}(x,\xi))
 \Big )\, \xi(d\theta).
\end{align*}
Following this pattern, we arrive at the dynamic programming (DP) backward recursion:
\begin{equation}\label{opt-T-1a}
V_t(x,\xi)=\min_{u\in \cU}\, \int_{\boldsymbol \Theta}\, \Big(c_{t}(x,u,\theta)+\int_{\mathcal{X}}V_{t+1}(x_{t+1},{\wt \xi^{u,x_{t+1},\xi}_{t+1}})K_{\theta}(dx_{t+1}|x,u)
 \Big )\, \xi(d\theta),\quad t\in\cT,
\end{equation}
where (cf. \eqref{Bayes-trb})
\begin{equation}\label{xi-t+1}
\wt \xi^{u,x_{t+1},\xi}_{t+1}(\theta)=\xi(\theta)\frac{K_\theta(x_{t+1}|x,u)}{\int_{\bTheta}\, K_\theta(x_{t+1}|x,u)\, \xi(d\theta)},
\end{equation}
and
\begin{equation}\label{term-v}
V_{T+1}\equiv 0.
\end{equation}
Note that \eqref{opt-T-1a} is a counterpart of \eqref{eq:main-2-alt}.

Accordingly, for $t=1,\ldots,T$ we define the candidate-optimal quasi-Markov control $\phi_{t}$ as
\begin{equation}\label{opt-T-1-strategy}
\phi_{t}(x,\xi)=\argmin_{u\in \cU}\, \int_{\boldsymbol \Theta}\, \Big(c_{t}(x,u,\theta)+\int_{\mathcal{X}}V_{t+1}(x_{t+1},{\wt \xi^{u,x_{t+1},\xi}_{t+1}})K_{\theta}(dx_{t+1}|x,u)
 \Big )\, \xi(d\theta).
\end{equation}

Recall that $\xi_1$ is a given prior distribution for $\Theta$. Also, recall that $h_1=x_1$.

Next, define a policy $\pi^*$ as follows,
\begin{align}\label{pistar}
\pi^*_1(h_1)&=\phi_1(x_1, \xi_1)\nonumber \\
\pi^*_t(h_t)&=\phi_t(x_t,\wh \xi^{\pi^*,h_t}_t),\quad t=2,\ldots,T,
\end{align}
where
\begin{align}\label{xi-hat}
\wh \xi^{\pi^*,h_2}_2=\wt \xi^{\pi^*_{1}(h_{1}),x_2,\xi_1}_{2}, \quad
\wh \xi^{\pi^*,h_3}_3=\wt \xi^{\pi^*_{2}(h_{2}),x_3,\wh \xi^{\pi^*,h_2}_2}_{3}, \ldots
\end{align}

The next result is the optimality verification theorem.

\begin{theorem}\label{thm:ver}
We have,
$$
\min _{\pi \in \Pi} v^\pi_1(h_1) = v^{\pi^*}_1(h_1)=V_1(x_1,\xi_1).
$$
\end{theorem}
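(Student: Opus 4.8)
The plan is to prove the stronger, history-wise comparison
$$v_t^\pi(h_t)\ge V_t\big(x_t,\xi_t^{\pi,h_t}\big),\qquad t\in\cT,$$
for every $\pi\in\Pi$ and every admissible $h_t$, by backward induction in $t$, and then to verify that $\pi^*$ turns each such inequality into an equality. Combining the two at $t=1$ gives $\min_{\pi}v_1^\pi(h_1)\ge V_1(x_1,\xi_1)=v_1^{\pi^*}(h_1)$, while $\pi^*\in\Pi$ supplies the reverse inequality, yielding the claim. The whole argument rests on recognizing $(x_t,\xi_t^{\pi,h_t})$ as a sufficient statistic, so that the minimization over controls in the DP recursion \eqref{opt-T-1a} matches, term by term, the policy evaluation \eqref{eq:add-recurrent-v} of Lemma~\ref{eq:hope}.

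For the lower bound, the base case $t=T$ is immediate from \eqref{eq:main-2-alt-T} and the definition \eqref{Bell-opt-T} of $V_T$: since $\phi_T$ minimizes over $u\in\cU$, replacing $\pi_T(h_T)$ by the minimizer can only decrease the integral, so $v_T^\pi(h_T)\ge V_T(x_T,\xi_T^{\pi,h_T})$. For the inductive step I would start from \eqref{eq:add-recurrent-v}, use \eqref{eq:P-theta-t+1} to write $P_{\theta,t+1}^{\pi^{t,h_t}}(dx_{t+1})=K_\theta(dx_{t+1}\mid x_t,\pi_t(h_t))$, and insert the inductive hypothesis $v_{t+1}^\pi(h_t,x_{t+1})\ge V_{t+1}(x_{t+1},\xi_{t+1}^{\pi,(h_t,x_{t+1})})$ under the (nonnegative) integrals. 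The crucial identification is supplied by the Bayes operator of Lemma~\ref{Bayes-dyn}, namely $\xi_{t+1}^{\pi,(h_t,x_{t+1})}=\wt\xi^{\pi_t(h_t),x_{t+1},\xi_t^{\pi,h_t}}_{t+1}$, which is exactly the belief update \eqref{xi-t+1} with $u=\pi_t(h_t)$ and $\xi=\xi_t^{\pi,h_t}$. The resulting right-hand side is then bounded below by the minimum over $u\in\cU$, which is precisely $V_t(x_t,\xi_t^{\pi,h_t})$ by \eqref{opt-T-1a}.

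For achievability I would first record that the recursively defined beliefs $\wh\xi_t^{\pi^*,h_t}$ of \eqref{xi-hat} coincide with the true posteriors $\xi_t^{\pi^*,h_t}$; this is a direct induction using \eqref{eq:rec-0} for the base case and Lemma~\ref{Bayes-dyn} for the step, so that $\pi^*_t(h_t)=\phi_t(x_t,\xi_t^{\pi^*,h_t})$. Repeating the backward induction along $\pi^*$, the only change is that at each stage the control $\pi^*_t(h_t)$ is, by construction \eqref{opt-T-1-strategy}, the minimizer $\phi_t$ evaluated at the current belief, so every inequality above becomes an equality; hence $v_t^{\pi^*}(h_t)=V_t(x_t,\xi_t^{\pi^*,h_t})$ for all $t$, and in particular $v_1^{\pi^*}(h_1)=V_1(x_1,\xi_1)$.

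The main obstacle is conceptual rather than computational: one must be careful that $V_t$ depends on the history $h_t$ only through the pair $(x_t,\xi_t^{\pi,h_t})$, and that the belief update \eqref{xi-t+1} used to define the Bellman recursion genuinely reproduces the conditional law of $\Theta$ generated by the dynamics under $\pi$. Both facts hinge on Lemma~\ref{Bayes-dyn}; once the posterior $\xi_t^{\pi,h_t}$ is confirmed to be the correct sufficient statistic and to evolve by the same Bayes rule appearing in the DP, the inductive comparison and its equality version for $\pi^*$ follow by routine substitution into \eqref{eq:add-recurrent-v} and \eqref{opt-T-1a}.
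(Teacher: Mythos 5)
Your proposal is correct and follows essentially the same route as the paper's own proof: a backward induction establishing $v_t^\pi(h_t)\ge V_t\bigl(x_t,\xi_t^{\pi,h_t}\bigr)$ via the recursion \eqref{eq:add-recurrent-v} and the Bayes update of Lemma~\ref{Bayes-dyn}, followed by the observation that substituting $\pi^*$ (with beliefs $\wh\xi_t^{\pi^*,h_t}$) turns every inequality into an equality. Your write-up is in fact slightly more explicit than the paper's on one point—verifying that the recursively defined beliefs \eqref{xi-hat} coincide with the true posteriors $\xi_t^{\pi^*,h_t}$—which the paper leaves implicit in its final replacement step.
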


\begin{proof}
Let $\pi\in \Pi$.
For $t=T$ we have
\[
v^\pi_T(h_T)\geq V_T(x_T,\xi^{\pi,h_T}_T)=\int_{\boldsymbol \Theta}\, c_T(x_T,\phi_T(x_T,\xi^{\pi,h_T}_T),\theta)\, \xi^{\pi,h_T}_T(d\theta).
 \]
For $t=T-1$, using the above, the recursion \eqref{eq:add-recurrent-v}, and \eqref{Bayes-trb}, we have
 \begin{align*}
v^\pi_{T-1}&(h_{T-1})
=\int_{\boldsymbol \Theta}\,\Bigg ( c_{T-1}(x_{T-1},\pi_{T-1}(h_{T-1}),\theta)
+ \int_{\mathcal{X}}\,  v_T^\pi(h_T)K_{\theta}\big (dx_{T}|x_{T-1},\pi_{T-1}(h_{T-1})\big )\Bigg )\xi^{\pi,h_{T-1}}_{T-1}(d\theta)\\
&\geq \int_{\boldsymbol \Theta}\,\Bigg ( c_{T-1}(x_{T-1},\pi_{T-1}(h_{T-1}),\theta)
 + \int_{\mathcal{X}}\,  V_T(x_T,\xi^{\pi,h_{T}}_{T})K_{\theta}\big (dx_{T}|x_{T-1},\pi_{T-1}(h_{T-1})\big )\Bigg )\xi^{\pi,h_{T-1}}_{T-1}(d\theta)\\
&\geq \int_{\boldsymbol \Theta}\,\Bigg ( c_{T-1}(x_{T-1},\phi_{T-1}(x_{T-1},\xi^{\pi,h_{T-1}}_{T-1}),\theta) \\
 & \qquad  + \int_{\mathcal{X}}\,{ V_T(x_T,\wt \xi^{\phi_{T-1}(x_{T-1},\xi^{\pi,h_{T-1}}_{T-1}),x_{T},\xi^{\pi,h_{T-1}}_{T-1}}_{T})}K_{\theta}\big (dx_{T}|x_{T-1},\phi_{T-1}(x_{T-1},\xi^{\pi,h_{T-1}}_{T-1})\big )\Bigg )\xi^{\pi,h_{T-1}}_{T-1}(d\theta) \\
&  =V_{T-1}(x_{T-1}, \xi^{\pi,h_{T-1}}_{T-1}).
\end{align*}
Likewise, for $t=1,\ldots,T-2$, we have
\begin{multline*}
v^\pi_t(h_t)\geq V_t(x_t,\xi^{\pi,h_t}_t)=\int_{\boldsymbol \Theta}\,\Bigg ( c_{t}(x_{t},\phi_{t}(x_{t},\xi^{\pi,h_{t}}_{t}),\theta) \\
+ \int_{\mathcal{X}}\, V_{t+1}(x_{t+1},\phi_{t+1}(x_{t+1},\wt \xi^{\phi_{t}(x_{t},\xi^{\pi,h_{t}}_{t}),x_{{t+1}},\xi^{\pi,h_{t}}_{t}}_{{t+1}}))K_{\theta}\big (dx_{{t+1}}|x_{t},\phi_{t}(x_{t},\xi^{\pi,h_{t}}_{t})\big )\Bigg )\xi^{\pi,h_{t}}_{t}(d\theta).
\end{multline*}
Now, if $\pi $ and $\xi^{\pi,h_{t}}_{t}$, $t\in\cT$, above are replaced with  $\pi^*$ and $\wh{\xi}^{\,\pi^*\!,h_{t}}_{t}$, $t\in\cT$, respectively, then the inequalities above become equalities, proving that $\pi^* $ is an optimal strategy.

\end{proof}

Recalling \eqref{eq:main1} and \eqref{eq:main2}, we note that the key DP recursion \eqref{opt-T-1} can be written as
\[
V_t(x,\xi)=\min_{u\in \mathcal{U}}\, \widehat \rho_t \Big(\Big\{ c_{t}(x,u,\theta)
+{\sigma_t(V_{t+1}(\cdot,{\wt \xi^{u,\cdot,\xi}_{t+1}});K_\theta(x,u))} ,\theta \in \bTheta\Big \};   \xi  \Big),
\]
subject to \eqref{xi-t+1} and \eqref{term-v}.

\begin{example}\label{ex:risk-averse-unknown}
	
We remark that the optimal control problem considered in this section can be cast in the classical optimal investment and consumption problem, now also subject to model uncertainty. Namely, consider an investor with initial capital $\bar x_1$, who can invest in $d$ assets, with $\bar X_t$ denoting the portfolio value at time $t$, which of course is observed by the investor.  The investor rebalances the portfolio at each time $t$, following a self-financing trading strategy (policy) $\pi$, that may satisfy additional trading constrains, such as short selling constrains, turn over constraints, etc. The investor is also allowed to consume at each time $t$ part of the wealth, say $z_t$, that does not exceed $\bar X_t$. We postulate that the investor maximizes the expected utility of consumptions and terminal wealth using the utility functions $V^\beta$ and $U^\gamma$, respectively, where $\beta, \gamma\in\bR$ stand for risk aversion-parameters. We refer the reader to \cite[Section 4]{BauRie2017} for detailed formulation of this problem  in the MDP framework.

Additionally, we assume that the investor faces the Knightian uncertainty about the model of the underlying assets, described in terms of a (finite) parametric set $\boldsymbol \Lambda\subset \bR^m$; see \cite{BCCCJ2019} for an overview of MDPs under Knightian uncertainty.

Moreover, we suppose that the investor is also uncertain about her risk aversion parameters $(\beta, \gamma)\in \boldsymbol \Gamma\subset \bR^2$. We emphasize that this additional feature of an unknown risk aversion parameter is practically important. Generally speaking, it is difficult to determine the investor's risk aversion parameter, which is well documented in the behavioral finance literature.  This becomes especially relevant in the context of fast-growing robo-advising industry that typically deals with unsophisticated investors, and which establishes investor's risk preferences without human intervention. At each time $t$, the investor reports through process $Y_t$ her subjective degree of happiness about the performance of her investment. For example, one can take $Y_t$ to be a Bernoulli random variable with $Y_t=1$ corresponding to happy and $Y_t=0$ meaning unhappy about her investment, and then follow a similar setup to the clinical trials Example~\ref{ex:3.6} and incorporate the uncertainty about $(\beta,\gamma)$ into the original MDP formulation.
	
Now we consider the observed state process $X_t=(\bar X_t,Y_t)$, and we take $\theta=(\beta,\gamma,\lambda)\in \boldsymbol{\Theta}=\boldsymbol{\Gamma}\times\boldsymbol{\Lambda}$ representing the model uncertainty in this model. Consequently, we define the cost functionals
\begin{align*}
c_t & = V^\beta(z_t(\bar x_t,\pi_t)) + F(y_t, \pi_t,\beta, \gamma), \ \  t=1,\ldots,T-1, \\
c_T & =  V^\beta(z_T(\bar x_T,\pi_T)) + U^\gamma(\bar x_T),
\end{align*}
where $F$ is a penalty for `deviating' from the true risk-aversion parameters.
Using the expectation as the risk functional (cf. Example~\ref{ex:additive-reward}), the problem \eqref{min} becomes the optimal investment and consumption problem. Theorem~\ref{thm:ver} gives the solution to this problem.
Detailed model specification and analysis is beyond the scope of this work and it will be addressed in future works.
\end{example}

\subsection{Optimal control problem corresponding to Example~\ref{e:trials-tom}}\label{sec:med}
We will present the solution to the optimal control problem for the clinical trials example with the risk-sensitive criterion.
Namely, for $t\in\cT$, we consider
\begin{align}\label{eq:RS-criterion-a}
v^\pi_{t}(h_t) & =\rho_t\Big(c_t(x_t,\pi_t(h_t),\cdot),c_{t+1}(\cdot,\pi_{t+1}(h_t,\cdot),\cdot),\nonumber \\
&\qquad \qquad c_{t+2}(\cdot,\pi_{t+2}(h_t,\cdot,\cdot),\cdot),\cdots ,c_{T}(\cdot,\pi_{T}(h_t,\cdot,\ldots,\cdot),\cdot),P^{\pi^{t,h_t}}_{t+1,T}\Big) \nonumber \\
& = \frac{1}{\varkappa}\ln E^\pi\left (\exp\left(\varkappa\Big (c_t(x_t,\pi_t(h_t),\Theta)+\sum_{k=t+1}^Tc_{k}(\widehat X_{k},\pi_{k}(h_t,\widehat X_{t+1},\ldots,\widehat X_k),\Theta)\Big )\right)\bigg |\widehat H_{t}=h_{t}\right ) \\
& = \frac{1}{\varkappa}\ln w^\pi_t(h_t).
\end{align}
It is clear that  problem \eqref{min} is equivalent to the following problem
\begin{equation}\label{min1}
	\min_{\pi\in \Pi} w_1^{\pi}(h_1),
	\end{equation}
For $t=T$ we have
\begin{align}\label{eq:cond-exp-1-sol-T-w}
w^\pi_T(h_T)= E^\pi\Big (\exp\left(\varkappa\, c_T(x_T,\pi_T(h_T), \Theta)\right)\big| \widehat H_T=h_T\Big )
=\int_{\boldsymbol \Theta}e^{\varkappa\, c_T(x_T,\pi_T(h_T),\theta)} \xi_T^{\pi, h_T}(d\theta).
\end{align}
As above, we denote by $(x,\xi)$ an element of the set $\mathcal{X}\times \mathcal{P}(\boldsymbol \Theta).$ Thus, observing  that $\xi^{\pi,h_T}_T$  does not depend on $\pi_T$, and letting  $x_T=x$ and $\xi_T=\xi$, we compute the candidate optimal quasi-Markov control $\varphi_T$ as
\begin{equation}\label{opt-T-w}
\varphi_T(x,\xi)=\argmin_{u\in \cU}\, \int_{\boldsymbol \Theta}\, e^{\varkappa\, c_T(x,u,\theta)}\,
 \xi(d\theta).
\end{equation}
We define the Bellman function at time $t=T$ as
\begin{equation}\label{Bell-opt-T-w}
W_T(x,\xi)= \min_{u\in \mathcal{U}}\, \int_{\boldsymbol \Theta}\, e^{\varkappa\, c_T(x,u,\theta)}\,
 \xi(d\theta)=\int_{\boldsymbol \Theta}\, e^{\varkappa\, c_T(x,\varphi_T(x,\xi),\theta)}\,
 \xi(d\theta).
\end{equation}
Now, we proceed to time $t=T-1$. Given $x_{T-1}=x$ and $\xi_{T-1}=\xi$ we compute the candidate optimal quasi-Markov control $\varphi_{T-1}$ as
\begin{equation}\label{opt-T-1-w}
\varphi_{T-1}(x,\xi)=\argmin_{u\in \cU}\, \int_{\boldsymbol \Theta}\int_{\mathcal{X}}e^{\varkappa\, c_{T-1}(x,u,\theta)}W_{T}(x_{T},{\wt \xi^{\,u,x_{T},\xi}_{T}})K_{\theta}(dx_{T}|x,u)
 \, \xi(d\theta),
\end{equation}
where $\wt \xi^{\,u,x_{T},\xi}_{T}$ is given by \eqref{xi-T}.
The corresponding Bellman function is
\begin{align}\label{Bell-opt-T-1-w}
W_{T-1}(x,\xi)&= \min_u\, \int_{\boldsymbol \Theta}\int_{\mathcal{X}}e^{\varkappa\, c_{T-1}(x,u,\theta)}W_{T}(x_{T},\wt \xi^{\,u,x_{T},\xi}_{T})\;K_{\theta}(dx_{T}|x,u)
 \; \xi(d\theta)\nonumber \\ &= \int_{\boldsymbol \Theta}\int_{\mathcal{X}}e^{\varkappa\, c_{T-1}(x,\varphi_{T-1}(x,\xi),\theta)}W_{T}(x_{T},{\wt \xi^{\,\varphi_{T-1}(x,\xi),x_{T},\xi}_{T}})\;K_{\theta}(dx_{T}|x,\varphi_{T-1}(x,\xi))
 \; \xi(d\theta).
\end{align}
Following this pattern, we arrive at the DP backward recursion:
\begin{equation}\label{opt-T-1-wa}
W_t(x,\xi)=\min_u\, \int_{\boldsymbol \Theta}\int_{\mathcal{X}}e^{\varkappa\, c_{t}(x,u,\theta)}W_{t+1}(x_{t+1},{\wt \xi^{\,u,x_{t+1},\xi}_{t+1}})\;K_{\theta}(dx_{t+1}|x,u)
 \; \xi(d\theta),\quad t\in\cT,
\end{equation}
where as in the previous example $\wt \xi^{u,x_{t+1},\xi}_{t+1}(\{\theta\})$ is given by \eqref{xi-t+1}.
and
$W_{T+1}\equiv 1.$
Note that \eqref{opt-T-1-wa} is a counterpart of \eqref{eq:main-2-alt}.

Accordingly, for $t\in\cT$, we define the candidate-optimal quasi-Markov control $\varphi_{t}$ as
\begin{equation}\label{opt-T-1-strategy-a}
\varphi_{t}(x,\xi)=\arg\min_u\, \int_{\boldsymbol \Theta}\int_{\mathcal{X}}e^{\varkappa\, c_{t}(x,u,\theta)}W_{t+1}(x_{t+1},{\wt \xi^{\,u,x_{t+1},\xi}_{t+1}})\;K_{\theta}(dx_{t+1}|x,u)
 \; \xi(d\theta),
\end{equation}
with $\xi_1$ being the given prior distribution for $\Theta$, and $h_1=x_1$.

The policy $\pi^*$ is defined by analogy to \eqref{pistar}. The following verification theorem can proved in a way analogous to the proof of Theorem \ref{thm:ver}, so we skip its proof.

\begin{theorem}
The following hold true
\begin{equation*}
\min _{\pi \in \Pi} v^\pi_1(h_1) = v^{\pi^*}_{1}(h_1)= \frac{1}{\varkappa} \ln\big(W_1(x_1,\xi_1)\big).
\end{equation*}
\end{theorem}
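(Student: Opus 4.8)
The plan is to transfer the additive argument of Theorem~\ref{thm:ver} to the multiplicative quantity $w^\pi_t$ and then take logarithms, exploiting that $\min_\pi v^\pi_1$ and $\min_\pi w^\pi_1$ are equivalent by \eqref{min1}. First I would record the multiplicative analogue of the recursion \eqref{eq:add-recurrent-v}. Since the entropic filter of Example~\ref{e:trials-tom} is recursive, exponentiating the recursive identity for $v^\pi_t$ and using $\exp\!\big(\varkappa\,\sigma_t(v^\pi_{t+1};P^{\pi^{t,h_t}}_{\theta,t+1})\big)=\int_{\cX} w^\pi_{t+1}(h_t,y)\,K_\theta(dy\mid x_t,\pi_t(h_t))$ yields, for $t=1,\dots,T-1$,
\begin{equation}\label{eq:w-recursion}
w^\pi_t(h_t) = \int_{\bTheta}\int_{\cX} e^{\varkappa c_t(x_t,\pi_t(h_t),\theta)}\, w^\pi_{t+1}(h_t,x_{t+1})\,K_\theta(dx_{t+1}\mid x_t,\pi_t(h_t))\,\xi^{\pi,h_t}_t(d\theta),
\end{equation}
with terminal value $w^\pi_T(h_T)=\int_{\bTheta} e^{\varkappa c_T(x_T,\pi_T(h_T),\theta)}\xi^{\pi,h_T}_T(d\theta)$ from \eqref{eq:cond-exp-1-sol-T-w}. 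This is the exact exponential counterpart of the recursion used in the proof of Theorem~\ref{thm:ver}, with $W_{T+1}\equiv 1$ playing the role of $V_{T+1}\equiv 0$.

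Next I would prove by backward induction that $w^\pi_t(h_t)\ge W_t(x_t,\xi^{\pi,h_t}_t)$ for every $\pi\in\Pi$. The base case $t=T$ is immediate from \eqref{Bell-opt-T-w}. For the inductive step I would insert the hypothesis $w^\pi_{t+1}\ge W_{t+1}$ into \eqref{eq:w-recursion}; the only delicate point is that the weight $e^{\varkappa c_t(x_t,\pi_t(h_t),\theta)}$ is \emph{strictly positive} (as $\varkappa>0$), so the inequality survives the multiplication and both integrations. I would then apply Lemma~\ref{Bayes-dyn} to identify the realized posterior with the Bayes update, $\xi^{\pi,(h_t,x_{t+1})}_{t+1}=\wt\xi^{\pi_t(h_t),x_{t+1},\xi^{\pi,h_t}_t}_{t+1}$ in the sense of \eqref{xi-t+1}, so the integrand becomes $W_{t+1}(x_{t+1},\wt\xi^{\pi_t(h_t),x_{t+1},\xi^{\pi,h_t}_t}_{t+1})$. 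Bounding below by $\min_{u\in\cU}$ then reproduces exactly the DP recursion \eqref{opt-T-1-wa}, giving $w^\pi_t(h_t)\ge W_t(x_t,\xi^{\pi,h_t}_t)$.

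To obtain equality I would verify that the policy $\pi^*$ assembled from the minimizers $\varphi_t$ of \eqref{opt-T-1-strategy-a}, together with the posteriors defined by analogy with \eqref{xi-hat}, turns each induction inequality into an equality: the hypothesis step is tight because $w^{\pi^*}_{t+1}=W_{t+1}$, and the $\min_u$ step is tight because $\pi^*_t(h_t)=\varphi_t(x_t,\wh\xi^{\pi^*,h_t}_t)$ attains the minimum while the Bayes recursion keeps the belief state consistent. This gives $w^{\pi^*}_1(h_1)=W_1(x_1,\xi_1)$ and hence $\min_{\pi\in\Pi}w^\pi_1(h_1)=W_1(x_1,\xi_1)$. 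Finally, since $v^\pi_1=\frac{1}{\varkappa}\ln w^\pi_1$ and $x\mapsto\frac{1}{\varkappa}\ln x$ is strictly increasing for $\varkappa>0$, minimization commutes with this transformation, yielding $\min_{\pi\in\Pi}v^\pi_1(h_1)=v^{\pi^*}_1(h_1)=\frac{1}{\varkappa}\ln W_1(x_1,\xi_1)$.

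The main obstacle is the monotonicity step in the induction: in the additive setting one merely adds a cost to $V_{t+1}$, whereas here one multiplies $W_{t+1}$ by $e^{\varkappa c_t}$, and the argument goes through only because this factor is positive, equivalently because $\varkappa>0$ renders the entropic $\sigma_t$ and $\widehat\rho_t$ monotone in their risk argument. Everything else is a faithful transcription of the proof of Theorem~\ref{thm:ver}, with sums replaced by products and the terminal convention $V_{T+1}\equiv 0$ replaced by $W_{T+1}\equiv 1$.
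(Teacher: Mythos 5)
Your proposal is exactly the transcription that the paper has in mind (the paper skips this proof, declaring it analogous to Theorem~\ref{thm:ver}), and the steps you flag as delicate are not the problem: positivity of $e^{\varkappa c_t}$, the Bayes identification via Lemma~\ref{Bayes-dyn}, the tightness at $\pi^*$, and the final monotone $\frac{1}{\varkappa}\ln$ transform are all fine. The genuine gap is the step you treat as routine, namely the multiplicative recursion
\begin{equation*}
w^\pi_t(h_t) = \int_{\bTheta}\int_{\cX} e^{\varkappa c_t(x_t,\pi_t(h_t),\theta)}\, w^\pi_{t+1}(h_t,x_{t+1})\,K_\theta(dx_{t+1}\mid x_t,\pi_t(h_t))\,\xi^{\pi,h_t}_t(d\theta).
\end{equation*}
This is \emph{not} the exponential counterpart of \eqref{eq:add-recurrent-v}: it fails whenever the running costs depend on $\theta$, which is the defining feature of this paper and of the clinical-trials example ($c(\theta,u)=|u-\theta|$). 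The additive recursion in Lemma~\ref{eq:hope} survives the fact that $c_t(x_t,\pi_t(h_t),\Theta)$ is not $\widehat H_{t+1}$-measurable only because expectation is linear, so the current cost separates from the future ones. In the multiplicative case there is no such separation: $e^{\varkappa c_t(x_t,\pi_t(h_t),\Theta)}$ cannot be pulled out of $E^\pi[\,\cdot\mid \widehat H_{t+1}]$, and $w^\pi_{t+1}$ has already averaged the exponentiated future costs against the posterior of $\Theta$, which destroys their correlation with $e^{\varkappa c_t(\cdot,\Theta)}$ through $\Theta$. Your appeal to ``the entropic filter is recursive'' does not close the gap: the paper asserts that recursiveness without proof, and it fails for exactly the same reason.

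The failure is substantive, not cosmetic. Take $T=2$, $\cU$ a singleton (so there is only one policy and no optimization at all), a kernel $K_\theta(\cdot\mid x,u)$ that does not depend on $\theta$ (e.g.\ uniform on $\cX$, so $\wt\xi^{u,x_2,\xi_1}_2=\xi_1$ by \eqref{xi-t+1}), and costs $c_1(x,u,\theta)=c_2(x,u,\theta)=c(\theta)$ nonconstant in $\theta$. Then directly from the definition, $w^\pi_1(h_1)=\int_\bTheta e^{2\varkappa c(\theta)}\xi_1(d\theta)$, whereas \eqref{Bell-opt-T-w} and \eqref{opt-T-1-wa} give $W_1(x_1,\xi_1)=\bigl(\int_\bTheta e^{\varkappa c(\theta)}\xi_1(d\theta)\bigr)^2$, which is strictly smaller by Cauchy--Schwarz. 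Hence $\min_\pi v^\pi_1(h_1)=v^{\pi^*}_1(h_1)>\frac{1}{\varkappa}\ln W_1(x_1,\xi_1)$: no rearrangement of your induction can establish the stated equality, because the statement itself does not hold at this level of generality. (For entropic criteria with unobserved $\theta$-dependent costs the plain Bayes posterior is not a sufficient information state; one needs the cost-tilted posterior, proportional to $e^{\varkappa\sum_{s<t}c_s(x_s,u_s,\theta)}$ against $\xi^{\pi,h_t}_t$.) Your argument, and the theorem, do become correct under the additional hypothesis that $c_t$ is independent of $\theta$ for $t<T$ (the terminal cost may still depend on $\theta$): then $e^{\varkappa c_t}$ is known at time $t$, the tower property yields your recursion, and every subsequent step of your proof is sound as written.
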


We emphasise that the key DP recursion \eqref{opt-T-1-w} may be written as
\[
W_t(x,\xi)=\min_{u\in \mathcal{U}}\, \widehat \rho_t \Big(\big\{ c_{t}(x,u,\theta)+\sigma_t\big(W_{t+1}(\cdot,{\wt \xi^{\,u,\cdot,\xi}_{t+1}});
K_\theta(x,u)\big) ,\theta \in \bTheta\big \}; \xi \Big),
\]
where for a function $f$ on $\bTheta$, $\xi\in\cP(\bTheta)$, and a function $h$ on $\cX$, we have
\[
\widehat \rho _t\big (\big \{ f(\theta),\theta \in \bTheta\big \};\xi\big ) =\int_\bTheta e^{\varkappa f(\theta)}\xi(d\theta),
\]
and where
\[
\sigma_t\big(h;K_\theta(x,u)\big) =\frac{1}{\varkappa}\ln \int_{\mathcal{X}} h(x_{t+1})K_{\theta}(dx_{t+1}|x,u).
\]

\subsection{Solution of the optimal control problem for general recursive risk filters}
Let $\rho$ be a recursive risk filter, and let
\begin{align}\label{eq:main-gen}
&v^\pi_t(h_t)=\rho_t\Big(c_t(x_t,\pi_t(h_t),\cdot),c_{t+1}(\cdot,\pi_{t+1}(h_t,\cdot),\cdot),c_{t+2}(\cdot,\pi_{t+2}(h_t,\cdot,\cdot),\cdot), \cdots\nonumber \\
&\qquad \qquad \qquad \qquad \qquad  \cdots ,c_{T}(\cdot,\pi_{T}(h_t,\cdot,\ldots,\cdot),\cdot),P^{\pi^{t,h_t}}_{t+1,T}\Big ),
\quad t\in\cT.
\end{align}
Consider the general problem \eqref{min} with $v^\pi_t(h_t)$ as in \eqref{eq:main-gen}.

Using reasoning analogous to the one employed in Sections \ref{sec:add} and \ref{sec:med} one can prove the following result, proof of which we omit here.

\begin{theorem}\label{thm:RA}
There exist operators $\widehat \rho_t$ and $\sigma_t$, $t\in\cT$, and a function $V^*$ such that for the functions $v^*_t$ defined recursively as
\begin{align*}
v^*_{T+1}(x) & = V^*(x),\ x\in \mathcal{X}, \\
v^*_t(x,\xi) & = \min_{u\in \mathcal{U}}\, \widehat \rho_t \Big(\Big\{ c_{t}(x,u,\theta)+\sigma_t(v^*_{t+1}(\cdot,{\wt \xi^{\,u,\cdot,\xi}_{t+1}});
K_\theta(x,u)) ,\theta \in \bTheta\Big \};   \xi  \Big),\\
& \qquad \quad \qquad \quad t=T-1,\ldots,1,\quad x\in \mathcal{X},\ \xi\in \mathcal{P}(\boldsymbol \Theta),
\end{align*}
subject to
\[
\wt \xi^{\,u,x',\xi}_{t+1}(\theta)=\xi(\theta)\frac{K_\theta(x'|x,u)}{\int_{\bTheta}\, K_\theta(x'|x,u)\, \xi(d\theta)},\quad t\in\cT, \  x,x'\in \mathcal{X},\ \xi\in \mathcal{P}(\boldsymbol \Theta),
\]
we have that
\begin{equation*}
\min _{\pi \in \Pi} v^\pi_1(h_1)= v^*_1(x_1,\xi_1).
\end{equation*}
 Moreover, the policy $\pi^*$ defined as in \eqref{pistar} and \eqref{xi-hat}, with the $\phi_t$'s given as
\begin{multline*}
\phi_t(x,\xi)=\argmin_{u\in \mathcal{U}}\, \widehat \rho_t \Big(\Big\{ c_{t}(x,u,\theta)+\sigma_t(v^*_{t+1}(\cdot,{\wt \xi^{u,\cdot,\xi}_{t+1}});K_\theta(x,u)) ,\theta \in \bTheta\Big \};   \xi  \Big),\\
 t=1,\ldots,T-1,\ x\in \mathcal{X},\ \xi\in \mathcal{P}(\boldsymbol \Theta),
\end{multline*}
is an optimal policy, that is

\begin{equation}
\min _{\pi \in \Pi} v^\pi_1(h_1) = v^{\pi^*}_{1}(h_1).
\end{equation}
The form of the operators $\widehat \rho_t$ and $\sigma_t$, $t\in\cT$, depends on the form of $\rho$, and it can be explicitly written in terms of $\rho$.
\end{theorem}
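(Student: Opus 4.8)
The plan is to run the same backward-induction verification argument as in the proof of Theorem~\ref{thm:ver}, now using the two recursive operators $\widehat\rho_t$ and $\sigma_t$ supplied by the structure Theorem~\ref{t:filter-structure} in place of the conditional expectation and the transition mapping of the additive case. Concretely, I would take $V_t:=v^*_t$ to be the Bellman functions defined by the stated backward recursion and prove, by backward induction in $t$, the pair of claims: (a) $v_t^\pi(h_t)\ge V_t(x_t,\xi^{\pi,h_t}_t)$ for every admissible $\pi\in\Pi$ and every $h_t\in\cH_t$; and (b) equality holds along the policy $\pi^*$ of \eqref{pistar}--\eqref{xi-hat}, i.e. $v_t^{\pi^*}(h_t)=V_t(x_t,\wh\xi^{\pi^*,h_t}_t)$. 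Evaluating these at $t=1$, where $h_1=x_1$ and $\xi^{\pi,h_1}_1=\xi_1$, then yields $\min_{\pi}v_1^\pi(h_1)\ge V_1(x_1,\xi_1)=v^*_1(x_1,\xi_1)$ together with attainment at $\pi^*$, which is precisely the assertion of the theorem.

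The two structural facts driving the induction are the recursiveness of $\rho$ and the Bayes dynamics of the belief. First, since $\rho$ is recursive, its defining recursion gives
\[
v_t^\pi(h_t)=\widehat\rho_t\Big(\Big\{c_t(x_t,\pi_t(h_t),\theta)+\sigma_t\big(v_{t+1}^\pi((h_t,\cdot));P^{\pi^{t,h_t}}_{\theta,t+1}\big),\ \theta\in\bTheta\Big\};\ \xi^{\pi,h_t}_t\Big),
\]
and by \eqref{eq:P-theta-t+1} the inner measure equals $K_\theta(\cdot\mid x_t,\pi_t(h_t))$, so the continuation enters only through the next-period value $v_{t+1}^\pi$ evaluated along the observed transition. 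Second, Lemma~\ref{Bayes-dyn} identifies the one-step posterior update: with $u=\pi_t(h_t)$ and $\xi=\xi^{\pi,h_t}_t$, the belief after observing $x_{t+1}$ equals precisely $\wt\xi^{u,x_{t+1},\xi}_{t+1}$ of \eqref{xi-t+1}. Crucially, $\xi^{\pi,h_t}_t$ is a function of $h_t$ and of the past controls $\pi_1,\dots,\pi_{t-1}$ only, and does not depend on $\pi_t,\dots,\pi_T$; this is what allows the minimization over the current action $u$ at time $t$ without altering the belief conditioning it.

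For the inductive step I would fix $\pi$, $h_t$, set $u=\pi_t(h_t)$, $\xi=\xi^{\pi,h_t}_t$, and apply hypothesis (a) pointwise in $x_{t+1}$: $v_{t+1}^\pi((h_t,x_{t+1}))\ge V_{t+1}(x_{t+1},\wt\xi^{u,x_{t+1},\xi}_{t+1})$, where the second argument is the correct posterior by the preceding paragraph. Because recursiveness of $\rho$ forces $\sigma_t$ to be monotonic in its function argument and $\widehat\rho_t$ to be monotonic (see the observation following \eqref{sigma-def-tom-original-0} and Proposition~\ref{prop:disintegration}), substituting this inequality and then minimizing over $u\in\cU$ gives $v_t^\pi(h_t)\ge V_t(x_t,\xi)$. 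For claim (b) the control $\pi^*_t(h_t)=\phi_t(x_t,\wh\xi^{\pi^*,h_t}_t)$ is by construction the argmin, so the minimization is attained, while the belief recursion \eqref{xi-hat} guarantees $\wh\xi^{\pi^*,(h_t,x_{t+1})}_{t+1}=\wt\xi^{\phi_t(x_t,\wh\xi^{\pi^*,h_t}_t),x_{t+1},\wh\xi^{\pi^*,h_t}_t}_{t+1}$, matching the posterior fed into $\sigma_t$; combined with the equality case of the hypothesis this propagates equality. The base case at $t=T$ is handled as in \eqref{eq:cond-exp-1-sol-T}, since there $v_T^\pi(h_T)=\widehat\rho_T(\{c_T(x_T,\pi_T(h_T),\theta)\};\xi^{\pi,h_T}_T)$ and $V_T(x,\xi)=\min_u\widehat\rho_T(\{c_T(x,u,\theta)\};\xi)$, with the terminal datum $V^*=v^*_{T+1}$ and $\sigma_T$ chosen so that the $t=T$ line of the recursion collapses to this, exactly as $V_{T+1}\equiv 0$ and $W_{T+1}\equiv 1$ do in the two worked examples.

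The main obstacle is the bookkeeping that keeps the belief argument consistent through the induction, rather than any deep estimate. One must verify carefully that the value genuinely depends on the history only through the Markovian pair $(x_t,\xi^{\pi,h_t}_t)$ and that the minimization at time $t$ commutes with conditioning on this belief, which hinges on $\xi^{\pi,h_t}_t$ being determined by past decisions alone. Equally delicate is checking that the two monotonicities, of $\widehat\rho_t$ and of $\sigma_t$ in its first argument, may be applied simultaneously and that the argmin defining $\phi_t$ uses precisely the posterior produced by the Bayes update \eqref{Bayes-trb}, so that the lower bound in (a) is met with equality by $\pi^*$; this is the step where the quasi-Markov structure of Definition~\ref{def:quasi-markov-policy} becomes essential.
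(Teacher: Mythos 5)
Your proposal is correct and follows essentially the route the paper intends: the paper omits the proof of Theorem~\ref{thm:RA}, stating only that it follows by reasoning analogous to Sections~\ref{sec:add} and~\ref{sec:med}, i.e.\ the backward-induction verification argument of Theorem~\ref{thm:ver}. Your reconstruction---combining the defining recursion of a recursive risk filter, the identification $P^{\pi^{t,h_t}}_{\theta,t+1}(\cdot)=K_\theta(\cdot\mid x_t,\pi_t(h_t))$, the Bayes update of Lemma~\ref{Bayes-dyn}, and the monotonicity of $\widehat\rho_t$ and $\sigma_t$ to propagate the inequality $v^\pi_t(h_t)\ge v^*_t(x_t,\xi^{\pi,h_t}_t)$ and the matching equality along $\pi^*$---is precisely that argument carried out for general recursive risk filters.
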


\bigskip \noindent
\section*{Acknowledgment}
The research of Andrzej Ruszczy\'{n}ski has benefited from partial support from National Science Foundation Award DMS-1907522 and by the Office of Naval Research Award N00014-21-1-2161.
Tomasz R. Bielecki and Igor Cialenco acknowledge support from the National Science Foundation grant DMS-1907568.

\bibliographystyle{alpha}
\newcommand{\etalchar}[1]{$^{#1}$}

\end{document}